\documentclass[12pt,a4paper,twoside]{article}

\usepackage{a4wide, amssymb, amsmath, amsthm, graphics, comment, xspace, enumerate}
\usepackage{graphicx}
\usepackage[a4paper,colorlinks=true,citecolor=blue,urlcolor=blue,linkcolor=blue,bookmarksopen=true,unicode=true,
          pdffitwindow=true]{hyperref}
\usepackage{algorithmic, algorithm}
\pagestyle{myheadings}

\input amssym.def
\input amssym

\allowdisplaybreaks

\newcommand{\NN}{\mathbb N}

\newcommand{\CC}{\mathbb C}
\newcommand{\RR}{\mathbb R}
\newcommand{\ZZ}{\mathbb Z}
\newcommand{\ds}{\displaystyle}
\newcommand{\EE}{\mathcal E}
\newcommand{\DD}{\mathcal D}
\newcommand{\SSS}{\mathcal S}

\newcommand{\supp}{\mathrm{supp\,}}

\newcommand{\Op}{\mathrm{Op}}

\newtheorem{theorem} {Theorem}[section]
\newtheorem{proposition}{Proposition}[section]
\newtheorem{lemma}{Lemma}[section]
\newtheorem{corollary}{Corollary}[section]
\newtheorem{definition}{Definition}[section]
\newtheorem{remark}{Remark}[section]
\parindent 0.7cm

\newcommand{\beq}{\begin{eqnarray}}
\newcommand{\eeq}{\end{eqnarray}}

\newcommand{\beqs}{\begin{eqnarray*}}
\newcommand{\eeqs}{\end{eqnarray*}}
\setcounter{section}{-1}

\begin{document}

\title{Pseudodifferential operators of infinite order in spaces of tempered ultradistributions}
\author{Bojan Prangoski}
\date{}
\maketitle

\begin{abstract}
Specific global symbol classes and corresponding pseudodifferential operators of infinite order that act continuously on the space of tempered ultradistributions of Beurling and Roumieu type are constructed. For these classes, symbolic calculus is developed.
\end{abstract}

\noindent \textbf{Mathematics Subject Classification} 47G30, 46F05\\
\textbf{Keywords} ultradistributions, pseudodifferential operators

\section{Introduction}

Pseudodifferential operators that act continuously on Gevrey classes were vastly studied during the years. A lot of local symbol classes that give rise to such operators (both of finite and infinite order) were constructed by many authors. Also, global symbol classes and corresponding operators (of finite and infinite order), as well as their symbolic calculus were developed in \cite{C1}, \cite{C2}, \cite{C3}, \cite{C4}, \cite{C5}, \cite{C6} (see also \cite{NR}). The functional frame in which those were studied are the Gelfand - Shilov spaces of Roumieu type. The symbol classes developed there are well suited for studying polyhomogeneous operators. In this paper we develop a global calculus for some classes of pseudodifferential operators of infinite order. The functional frame in which the considered symbol classes and the corresponding pseudodifferential operators will be studied is going to be Komatsu ultradistributions, more precisely the spaces of tempered ultradistributions of Beurling and Roumieu type. Our symbol classes are similar to those in \cite{C3} and \cite{C4}, but the weights that control the growth of the derivatives of the symbols are constructed in such way that they give well suited environment for studying Anti-Wick and Weyl operators on the space of tempered ultradistributions. In this paper, we develop calculus for our symbol classes, i.e. we proof results about change of quantization, composition of operators and asymptotic expansion of the symbol of the transposed operator.\\
\indent The paper is organized as follows:\\
\indent 1. \textbf{Preliminaries}. Definition and basic facts are given concerning test spaces and corresponding spaces of ultradistributions. Some facts are cited from \cite{BojanL} which will be needed for the next sections. Also, a kernel theorem is proven for the space of tempered ultradistributions.\\
\indent 2. \textbf{Definition and basic properties of the symbol classes}. The definition of the symbol classes is given as well as their basic topological properties. Pseudodifferential operators $\Op_{\tau}(a)$, arising from $\tau$ - quantization of the symbol $a$ are studied. A theorem that gives the hypocontinuity of the mapping $(a,u)\mapsto \Op_{\tau}(a)u$, for $u$ in the test space, is proven.\\
\indent 3. \textbf{Symbolic calculus}. The space of asymptotic expansion is defined. Results, concerning change of quantization, composition of operators and asymptotic expansion of the symbol of the transposed operator are proven.

\section{Preliminaries}

The sets of natural, integer, positive integer, real and complex numbers are denoted by $\NN$, $\ZZ$, $\ZZ_+$, $\RR$, $\CC$. We use the symbols for $x\in \RR^d$: $\langle x\rangle =(1+|x|^2)^{1/2} $,
$D^{\alpha}= D_1^{\alpha_1}\ldots D_n^{\alpha_d},\quad D_j^
{\alpha_j}={i^{-1}}\partial^{\alpha_j}/{\partial x}^{\alpha_j}$, $\alpha=(\alpha_1,\alpha_2,\ldots,\alpha_d)\in\NN^d$. If $z\in\CC^d$, by $z^2$ we will denote $z^2_1+...+z^2_d$. Note that, if $x\in\RR^d$, $x^2=|x|^2$.\\
\indent Following \cite{Komatsu1}, we denote by $M_{p}$ a sequence of positive numbers $M_0=1$ so that:\\
\indent $(M.1)$ $M_{p}^{2} \leq M_{p-1} M_{p+1}, \; \; p \in\ZZ_+$;\\
\indent $(M.2)$ $\ds M_{p} \leq c_0H^{p} \min_{0\leq q\leq p} \{M_{p-q} M_{q}\}$, $p,q\in \NN$, for some $c_0,H\geq1$;\\
\indent $(M.3)$  $\ds\sum^{\infty}_{p=q+1}   \frac{M_{p-1}}{M_{p}}\leq c_0q \frac{M_{q}}{M_{q+1}}$, $q\in \ZZ_+$,\\
although in some assertions we could assume the weaker ones $(M.2)'$ and $(M.3)'$ (see \cite{Komatsu1}). For a multi-index $\alpha\in\NN^d$, $M_{\alpha}$ will mean $M_{|\alpha|}$, $|\alpha|=\alpha_1+...+\alpha_d$. Recall,  $m_p=M_p/M_{p-1}$, $p\in\ZZ_+$ and the associated function for the sequence $M_{p}$ is defined by
\beqs
M(\rho)=\sup  _{p\in\NN}\log_+   \frac{\rho^{p}}{M_{p}} , \; \; \rho > 0.
\eeqs
It is non-negative, continuous, monotonically increasing function, which vanishes for sufficiently small $\rho>0$ and increases more rapidly then $(\ln \rho)^p$ when $\rho$ tends to infinity, for any $p\in\NN$.\\
\indent Let $U\subseteq\RR^d$ be an open set and $K\subset\subset U$ (we will use always this notation for a compact subset of an open set). Then $\EE^{\{M_p\},h}(K)$ is the space of all $\varphi\in \mathcal{C}^{\infty}(U)$ which satisfy $\ds\sup_{\alpha\in\NN^d}\sup_{x\in K}\frac{|D^{\alpha}\varphi(x)|}{h^{\alpha}M_{\alpha}}<\infty$ and $\DD^{\{M_p\},h}_K$ is the space of all $\varphi\in \mathcal{C}^{\infty}\left(\RR^d\right)$ with supports in $K$, which satisfy $\ds\sup_{\alpha\in\NN^d}\sup_{x\in K}\frac{|D^{\alpha}\varphi(x)|}{h^{\alpha}M_{\alpha}}<\infty$;
$$
\EE^{(M_p)}(U)=\lim_{\substack{\longleftarrow\\ K\subset\subset U}}\lim_{\substack{\longleftarrow\\ h\rightarrow 0}} \EE^{\{M_p\},h}(K),\,\,\,\,
\EE^{\{M_p\}}(U)=\lim_{\substack{\longleftarrow\\ K\subset\subset U}}
\lim_{\substack{\longrightarrow\\ h\rightarrow \infty}} \EE^{\{M_p\},h}(K),
$$
\beqs
\DD^{(M_p)}(U)=\lim_{\substack{\longrightarrow\\ K\subset\subset U}}\lim_{\substack{\longleftarrow\\ h\rightarrow 0}} \DD^{\{M_p\},h}_K,\,\,\,\,
\DD^{\{M_p\}}(U)=\lim_{\substack{\longrightarrow\\ K\subset\subset U}}\lim_{\substack{\longrightarrow\\ h\rightarrow \infty}} \DD^{\{M_p\},h}_K.
\eeqs
The spaces of ultradistributions and ultradistributions with compact support of Beurling and Roumieu type are defined as the strong duals of $\DD^{(M_p)}(U)$ and $\EE^{(M_p)}(U)$, resp. $\DD^{\{M_p\}}(U)$ and $\EE^{\{M_p\}}(U)$. For the properties of these spaces, we refer to \cite{Komatsu1}, \cite{Komatsu2} and \cite{Komatsu3}. In the future we will not emphasize the set $U$ when $U=\RR^d$. Also, the common notation for the symbols $(M_{p})$ and $\{M_{p}\} $ will be *.\\
\indent For $f\in L^{1} $, its Fourier transform is defined by
$(\mathcal{F}f)(\xi ) = \int_{{\RR^d}} e^{-ix\xi}f(x)dx$, $\xi \in {\RR^d}$.

By $\mathfrak{R}$ is denoted a set of positive sequences which monotonically increases to infinity. For $(r_p)\in\mathfrak{R}$, consider the sequence $N_0=1$, $N_p=M_p\prod_{j=1}^{p}r_j$, $p\in\ZZ_+$. One easily sees that this sequence satisfies $(M.1)$ and $(M.3)'$ and its associated function will be denoted by $N_{r_p}(\rho)$, i.e. $\ds N_{r_{p}}(\rho )=\sup_{p\in\NN} \log_+ \frac{\rho^{p }}{M_p\prod_{j=1}^{p}r_j}$, $\rho > 0$. Note, for given $(r_{p})$ and every $k > 0 $ there is $\rho _{0} > 0$ such that $\ds N_{r_{p}} (\rho ) \leq M(k \rho )$, for $\rho > \rho _{0}$. In \cite{BojanL} the following lemmas are proven (for the definition of subordinate function see \cite{Komatsu1}).
\begin{lemma}\label{15}
let $g:[0,\infty)\longrightarrow[0,\infty)$ be an increasing function that satisfies the following estimate: for every $L>0$ there exists $C>0$ such that $g(\rho)\leq M(L\rho)+\ln C$. Then there exists subordinate function $\epsilon(\rho)$ such that $g(\rho)\leq M(\epsilon(\rho))+\ln C'$, for some constant $C'>1$.
\end{lemma}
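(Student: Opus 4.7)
The plan is to construct $\epsilon$ piecewise, alternating linear segments of slope $H/n$ with constant plateaus; the breakpoints will be positioned so that $M(\epsilon(\rho))$ dominates the drifting constants coming out of the hypothesis. First, applying the hypothesis with $L=1/n$ yields, for each $n\in\ZZ_+$, a constant $C_n>0$ with $g(\rho)\leq M(\rho/n)+\ln C_n$ for all $\rho\geq 0$. A routine manipulation of the defining supremum shows that $(M.2)$ forces the associated-function inequality $2M(\rho)\leq M(H\rho)+\ln c_0$, so whenever $M(\rho/n)\geq\ln C_n$ one has
\[
g(\rho)\leq 2M(\rho/n)\leq M(H\rho/n)+\ln c_0.
\]

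Next I choose a strictly increasing sequence $\sigma_n\to\infty$, with $\sigma_1>0$, satisfying $M(\sigma_n)\geq\ln C_n$; this is possible because $M(\rho)\to\infty$. Define $\epsilon$ on $[\sigma_1,\infty)$ by
\[
\epsilon(\rho)=\begin{cases} H\rho/n, & \rho\in[n\sigma_n,n\sigma_{n+1}],\\ H\sigma_{n+1}, & \rho\in[n\sigma_{n+1},(n+1)\sigma_{n+1}],\end{cases}
\]
and on $[0,\sigma_1]$ by $\epsilon(\rho)=H\rho$. At every junction both sides evaluate to $H\sigma_{n+1}$, so $\epsilon$ is continuous and non-decreasing. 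On each full block $[n\sigma_n,(n+1)\sigma_{n+1}]$ we read off $\epsilon(\rho)/\rho\leq H/n\to 0$, which makes $\epsilon$ subordinate in the sense of \cite{Komatsu1}. For $\rho$ in a linear piece $[n\sigma_n,n\sigma_{n+1}]$ we have $\rho/n\geq\sigma_n$, hence $M(\rho/n)\geq M(\sigma_n)\geq\ln C_n$, and the inequality of the previous paragraph gives $g(\rho)\leq M(\epsilon(\rho))+\ln c_0$. On a plateau $[n\sigma_{n+1},(n+1)\sigma_{n+1}]$ the hypothesis with $L=1/(n+1)$ combined with $M(\sigma_{n+1})\geq\ln C_{n+1}$ yields
\[
g(\rho)\leq M(\sigma_{n+1})+\ln C_{n+1}\leq 2M(\sigma_{n+1})\leq M(H\sigma_{n+1})+\ln c_0=M(\epsilon(\rho))+\ln c_0.
\]
The boundedness of $g$ on $[0,\sigma_1]$ is absorbed by enlarging the final constant $C'$, so $g(\rho)\leq M(\epsilon(\rho))+\ln C'$ holds throughout.

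The technical delicacy lies in (i) tuning the breakpoints $\sigma_n$ so that $M(\sigma_n)\geq\ln C_n$ while keeping the sequence strictly increasing, (ii) inserting constant plateaus so that continuity and monotonicity survive the slope drop from $H/n$ to $H/(n+1)$, and (iii) inflating each slope by the factor $H$ so that $(M.2)$ absorbs the $2M$ produced when the drifting constants $\ln C_n$ are buried inside $M(\rho/n)$. The main obstacle is exactly this last point: one has to identify and exploit the $(M.2)$-consequence $2M(\rho)\leq M(H\rho)+\ln c_0$, which is what turns an $n$-dependent inequality into one with a universal constant. Once these three ingredients are in balance, the argument reduces to the piecewise verification carried out above.
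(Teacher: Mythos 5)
Your construction is correct: applying the hypothesis with $L=1/n$, choosing the breakpoints $\sigma_n$ so that $M(\sigma_n)\geq\ln C_n$, and absorbing the drifting constants via the $(M.2)$-consequence $2M(\rho)\leq M(H\rho)+\ln c_0$ gives $g(\rho)\leq M(\epsilon(\rho))+\ln c_0$ on every piece, while $\epsilon(\rho)/\rho\leq H/n$ on the $n$-th block makes $\epsilon$ subordinate; note that the paper itself does not prove this lemma but defers to \cite{BojanL}, and your argument is the standard construction used for statements of this type. The only cosmetic point is that your $\epsilon$ is merely non-decreasing on the plateaus; if a strictly increasing subordinate function is required, give each plateau a small positive slope (which affects none of the estimates, since this only increases $\epsilon$ there while keeping $\epsilon(\rho)/\rho\leq H/n$).
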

\begin{lemma}\label{pst}
Let $(k_p)\in\mathfrak{R}$. There exists $(k'_p)\in\mathfrak{R}$ such that $k'_p\leq k_p$ and
\beqs
\prod_{j=1}^{p+q}k'_j\leq 2^{p+q}\prod_{j=1}^{p}k'_j\cdot\prod_{j=1}^{q}k'_j, \mbox{ for all }p,q\in\ZZ_+.
\eeqs
\end{lemma}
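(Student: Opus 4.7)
The plan is to construct $(k'_p)$ as a step function built from a geometrically-growing sequence of breakpoints adapted to $(k_p)$. The idea is that the desired product inequality is a form of quasi-submultiplicativity of the partial products $K_p=\prod_{j=1}^p k'_j$, which is naturally realised by sequences that double in value at sufficiently separated points.

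Concretely, set $p_0=1$ and recursively define
\[
p_{n+1} \ = \ \max\bigl(2p_n,\ \min\{p\in\ZZ_+ : k_p\geq 2^{n+1}\}\bigr).
\]
The inner minimum is well defined because $(k_p)\in\mathfrak{R}$ forces $k_p\to\infty$. Then $(p_n)$ is strictly increasing, satisfies $p_{n+1}\geq 2p_n$, and $k_{p_n}\geq 2^n$ by monotonicity of $(k_p)$. Put $k'_p=2^n$ for $p\in[p_n,p_{n+1})$, with a trivial adjustment of the first few terms (replacing $k'_p$ by $\min(1,k_p)$ on the initial segment where $k_p<1$, should that occur) so that $k'_p\leq k_p$ on $[1,p_1)$. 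Then $(k'_p)$ is positive, non-decreasing, tends to infinity, and the bound $k'_p\leq k_p$ follows on each $[p_n,p_{n+1})$ from $k_p\geq k_{p_n}\geq 2^n=k'_p$; hence $(k'_p)\in\mathfrak{R}$.

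The heart of the argument is the product inequality. Setting $S_m=\log_2 K_m$, a direct summation along the step structure yields the closed form
\[
S_m \ = \ Mm + M - \sum_{i=1}^M p_i,\qquad p_M\leq m<p_{M+1},
\]
so the desired inequality becomes $S_{p+q}-S_p-S_q\leq p+q$. Given $p\in[p_{M_1},p_{M_1+1})$, $q\in[p_{M_2},p_{M_2+1})$ with $M_1\leq M_2$, and $p+q\in[p_{M_3},p_{M_3+1})$, the estimate $p+q<2p_{M_2+1}\leq p_{M_2+2}$ (which uses $p_{n+1}\geq 2p_n$) forces $M_3\in\{M_2,M_2+1\}$. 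In each of these two cases, inserting the explicit formula reduces the inequality to an elementary estimate controlled by the two bookkeeping consequences of geometric growth, namely $\sum_{i=1}^{M_1}p_i\leq 2p_{M_1}$ and $p_{M_1}\leq 2^{-(M_2-M_1)}p_{M_2}$.

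I expect the main obstacle to be the case analysis itself: while every individual arithmetic step is elementary, the boundary situations (in particular $M_1=0$, $M_1=M_2$, and $p+q$ lying just above a breakpoint $p_{M_2+1}$) need to be handled carefully so that the slack in the factor $2^{p+q}$ absorbs all the integer correction terms of the form $M_j$ and $\sum p_i$ and closes uniformly.
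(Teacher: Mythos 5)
The paper does not actually contain a proof of this lemma (it is quoted from \cite{BojanL}), so there is nothing internal to compare against; I am judging your argument on its own. The core of your construction is sound: with breakpoints satisfying $p_{n+1}\geq 2p_n$ and $k_{p_n}\geq 2^n$, and $k'_p=2^n$ on $[p_n,p_{n+1})$, your closed form $S_m=Mm+M-\sum_{i=1}^{M}p_i$ is correct, the bound $p+q<2p_{M_2+1}\leq p_{M_2+2}$ does force $M_3\in\{M_2,M_2+1\}$, and the case analysis you defer does close. For the record, writing $r=M_2-M_1$, in the case $M_3=M_2$ one gets $S_{p+q}-S_p-S_q-(p+q)=(r-1)p+\sum_{i=1}^{M_1}p_i-M_1-q$, which is $\leq 0$ using $\sum_{i=1}^{M_1}p_i\leq 2p_{M_1}\leq p_{M_1+1}$, $p<p_{M_1+1}\leq 2^{1-r}p_{M_2}\leq 2^{1-r}q$ and $r2^{1-r}\leq 1$ (the subcase $r=0$ instead uses $2p_{M_1}\leq p+q$); in the case $M_3=M_2+1$ the quantity is $rp+\sum_{i=1}^{M_1}p_i-p_{M_2+1}+1-M_1$ and $(r+1)2^{-r}\leq 1$ makes $p_{M_2+1}$ absorb $rp+\sum_{i=1}^{M_1}p_i$, with only the boundary $M_1=0$ needing the extra observation $M_2(p_1-1)\leq \frac{1}{2}p_{M_2+1}-M_2$. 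So the deferred verification is not where the problem lies.

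The genuine gap is the ``trivial adjustment'' on the initial segment. The definition of $\mathfrak{R}$ in this paper allows $k_1<1$, so you must take $k'_1\leq k_1<1$; but your patch (replace $k'_p$ by $\min(1,k_p)$ only where $k_p<1$, keeping $k'_p=2^n$ elsewhere) destroys the product inequality. Take $k_1=1/10$, $k_2=2$, $k_3=3,\dots$: then $p_1=2$, $k'_1=1/10$, $k'_2=2$, and $p=q=1$ requires $k'_1k'_2\leq 2^2(k'_1)^2$, i.e. $2\leq 2/5$, which is false. The reason is structural: lowering only finitely many initial factors multiplies $K_{p+q}/(K_pK_q)$ by the reciprocal of the removed constant once $p$ and $q$ both pass the initial segment, and $2^{p+q}$ does not cover that loss for small $p,q$. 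The repair is to rescale uniformly rather than locally: set $c=\min(1,k_1)$, define the breakpoints using the condition $k_p\geq c\,2^{n+1}$, and put $k'_p=c\,2^n$ on $[p_n,p_{n+1})$. Then $k'_p\leq k_p$ everywhere, every partial product acquires the same factor $c^m$, and the quotient $\prod_{j=1}^{p+q}k'_j\,/\,(\prod_{j=1}^{p}k'_j\cdot\prod_{j=1}^{q}k'_j)$ is unchanged, so your case analysis applies verbatim. With that modification the proof is complete.
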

\noindent Hence, for every $(k_p)\in\mathfrak{R}$, we can find $(k'_p)\in\mathfrak{R}$, as lemma \ref{pst}, such that $N_{k_p}(\rho)\leq N_{k'_p}(\rho)$, $\rho>0$ and the sequence $N_0=1$, $N_p=M_p\prod_{j=1}^{p}k'_j$, $p\in\ZZ_+$, satisfies $(M.2)$ if $M_p$ does.\\
\indent From now on, we always assume that $M_p$ satisfies $(M.1)$, $(M.2)$ and $(M.3)$. It is said that $P(\xi ) =\sum _{\alpha \in \NN^d}c_{\alpha } \xi^{\alpha}$, $\xi \in \RR^d$, is an ultrapolynomial of the class $(M_{p})$, resp. $\{M_{p}\}$, whenever the coefficients $c_{\alpha }$ satisfy the estimate $|c_{\alpha }|  \leq C L^{|\alpha| }/M_{\alpha}$, $\alpha \in \NN^d$ for some $L > 0$ and $C>0$, resp. for every $L > 0 $ and some $C_{L} > 0$. The corresponding operator $P(D)=\sum_{\alpha} c_{\alpha}D^{\alpha}$ is an ultradifferential operator of the class $(M_{p})$, resp. $\{M_{p}\}$ and they act continuously on $\EE^{(M_p)}(U)$ and $\DD^{(M_p)}(U)$, resp. $\EE^{\{M_p\}}(U)$ and $\DD^{\{M_p\}}(U)$ and the corresponding spaces of ultradistributions. In \cite{BojanL} a special class of ultrapolynomials of class * were constructed. We summarize the results obtained there in the following proposition.
\begin{proposition}\label{orn}
Let $c>0$ and $k>0$, resp. $c>0$ and $(k_p)\in\mathfrak{R}$ are arbitrary but fixed. Then there exist $l>0$ and $q\in\ZZ_+$, resp. there exist $(l_p)\in\mathfrak{R}$ and $q\in\ZZ_+$ such that $\ds P_l(z)=\prod_{j=q}^{\infty}\left(1+\frac{z^2}{l^2 m_j^2}\right)$, resp. $\ds P_{l_p}(z)=\prod_{j=q}^{\infty}\left(1+\frac{z^2}{l_j^2 m_j^2}\right)$, is an entire function that doesn't have zeroes on the strip $W=\RR^d+i\{y\in\RR^d||y_j|\leq c,\,j=1,...,d\}$. $P_l(x)$, resp. $P_{l_p}(x)$, is an ultrapolynomial of class *. Moreover $|P_l(z)|\geq \tilde{C}e^{M(|z|/k)}$, resp. $|P_{l_p}(z)|\geq \tilde{C}e^{N_{k_p}(|z|)}$, $z\in W$, for some $\tilde{C}>0$ and $\ds\left|\partial^{\alpha}_x\frac{1}{P_l(x)}\right|\leq C\cdot\frac{\alpha!}{r^{|\alpha|}}e^{-M\left(|x|/k\right)}$, resp. $\ds\left|\partial^{\alpha}_x\frac{1}{P_{l_p}(x)}\right|\leq C\cdot\frac{\alpha!}{r^{|\alpha|}}e^{-N_{k_p}(|x|)}$, $x\in\RR^d$, $\alpha\in\NN^d$, where $C$ depends on $k$ and $l$, resp. $(k_p)$ and $(l_p)$, and $M_p$; $r\leq c$ arbitrary but fixed.
\end{proposition}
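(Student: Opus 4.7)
The plan is to handle the Beurling and Roumieu cases in parallel, since the Roumieu version only differs by replacing the constant $l$ by the sequence $(l_p)$ and the function $M$ by $N_{k_p}$ throughout. First, condition $(M.3)$ gives $\sum 1/m_j<\infty$, so the product $\prod_{j\geq q}(1+z^2/(l^2m_j^2))$ converges absolutely and locally uniformly on $\CC^d$ and defines an entire function $P_l$. Its only zeros lie on the set $\{z^2=-l^2m_j^2\}$, and for $z=x+iy\in W$ one has $\mathrm{Re}(z^2)=|x|^2-|y|^2\geq -dc^2$; hence $P_l$ does not vanish on $W$ as soon as $l^2m_j^2>dc^2$ for every $j\geq q$, which is achievable by choosing $q$ large since $m_j\to\infty$.

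Next, to show $P_l$ is an ultrapolynomial of class $*$ and to get matching upper and lower bounds on $W$, I split the product at the integer $N=N(|z|)$ defined by $lm_N\leq |z|<lm_{N+1}$. For the upper bound, the factors with $j\leq N$ contribute at most $2^{N-q+1}(|z|/l)^{2(N-q+1)}M_{q-1}^2/M_N^2$, while $(M.3)$ controls the tail by $\exp(|z|^2\sum_{j>N}1/(l^2m_j^2))\leq e^{c_0 N}$. This yields $|P_l(z)|\leq Ce^{M(L|z|)}$ after invoking $(M.2)$, so $P_l$ is an ultrapolynomial of class $*$. For the lower bound on the real axis, the same truncation gives
\beqs
P_l(x)\geq \prod_{j=q}^N\frac{x^2}{l^2m_j^2}= \frac{M_{q-1}^2}{M_N^2}\Bigl(\frac{|x|}{l}\Bigr)^{2(N-q+1)}\geq C' e^{2M(|x|/l)}\Bigl(\frac{|x|}{l}\Bigr)^{-2q+2},
\eeqs
and choosing $l$ sufficiently small relative to $k$ absorbs the polynomial factor and the excess power in the exponent (via $(M.2)$), producing $P_l(x)\geq \tilde Ce^{M(|x|/k)}$. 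To pass from the real axis to the strip I would show that for $j$ with $l^2m_j^2\geq 2dc^2$ (enforced by a further enlargement of $q$) and $z=x+iy\in W$,
\beqs
|1+z^2/(l^2m_j^2)|\geq \mathrm{Re}(1+z^2/(l^2m_j^2))\geq \tfrac12+x^2/(l^2m_j^2)\geq \tfrac12(1+x^2/(l^2m_j^2)),
\eeqs
while the finitely many remaining factors are bounded below by a positive constant on $W$ because they have no zeros there. Taking the product gives $|P_l(z)|\geq C''P_l(|x|)$; combined with the real-axis bound, this yields $|P_l(z)|\geq \tilde Ce^{M(|z|/k)}$ on $W$.

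For the derivative bound I apply Cauchy's formula on the polydisk $\{|z_j-x_j|=r\}$ of radii $r\leq c$, which lies in $W$ by the choice of $r$:
\beqs
\Bigl|\partial_x^\alpha\frac{1}{P_l(x)}\Bigr|\leq \frac{\alpha!}{r^{|\alpha|}}\sup_{|z_j-x_j|=r}\frac{1}{|P_l(z)|}\leq \frac{C\alpha!}{r^{|\alpha|}}\sup_{|z_j-x_j|=r}e^{-M(|z|/k)}.
\eeqs
Since $|z|$ and $|x|$ differ by at most $r\sqrt d$ on the polydisk and the associated function $M$ satisfies an estimate of the form $M(\rho)\leq M(H\rho')+\mathrm{const}$ when $|\rho-\rho'|$ is bounded (a consequence of $(M.2)$), the supremum converts to $e^{-M(|x|/k)}$ after adjusting the implicit constants.

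The main obstacle I foresee is the real-to-complex reduction in the lower bound: $(l,q)$ must be chosen so that simultaneously there are no zeros in $W$, the real-axis bound comes out with the prescribed $k$ (not a larger multiple of it), and the factor-by-factor comparison above holds uniformly in $j\geq q$. A subsidiary issue is tracking how the parameter $k$ propagates through all these choices, particularly in the Roumieu case where the sequence $(l_p)$ must grow rapidly enough relative to $(k_p)$ while still leaving room for the no-zeros and derivative steps.
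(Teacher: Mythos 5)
First, a remark on the comparison itself: the paper contains no proof of Proposition \ref{orn} — it is imported verbatim from \cite{BojanL} ("We summarize the results obtained there...") — so there is no in-text argument to measure yours against, and I can only assess your outline on its own terms. Your overall architecture (convergence of the product from $(M.3)'$, locating the zeros of $1+z^2/(l^2m_j^2)$ to choose $q$, truncating the product at the index $N$ with $lm_N\leq|z|<lm_{N+1}$ for both the upper bound $|P_l(z)|\leq Ce^{M(L|z|)}$ and the real-axis lower bound, and the Cauchy estimate on a polydisk of radii $r\leq c$ for the derivatives of $1/P_l$) is the standard route and those steps are essentially sound.

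There is, however, a genuine gap in the passage from the real axis to the strip. You bound each factor by $|1+z^2/(l^2m_j^2)|\geq \tfrac12\bigl(1+x^2/(l^2m_j^2)\bigr)$ for every $j\geq q$ (after enlarging $q$ so that $l^2m_j^2\geq 2dc^2$ for all $j\geq q$) and then "take the product". But then every factor of the infinite product carries the loss $\tfrac12$, so the accumulated constant is $\prod_{j\geq q}\tfrac12=0$ and the claimed inequality $|P_l(z)|\geq C''P_l(|x|)$ is vacuous; nor does the alternative reading help, since after enlarging $q$ there are no "finitely many remaining factors" — all factors are in the regime where you apply the $\tfrac12$. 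The normalization $\tfrac12+t\geq\tfrac12(1+t)$ is exactly the wrong one for an infinite product: the additive $1$ must be kept intact in all but $O(N(|z|))$ factors. The standard repair is to use $|x|^2\geq|z|^2-dc^2$ on $W$ to get $\mathrm{Re}\bigl(1+z^2/(l^2m_j^2)\bigr)\geq 1+(|z|^2-2dc^2)/(l^2m_j^2)\geq 1+|z|^2/(2l^2m_j^2)$ whenever $|z|^2\geq 4dc^2$, which loses no multiplicative constant and reduces the strip bound to your real-variable computation with $l$ replaced by $\sqrt2\,l$; the set $\{z\in W:|z|^2<4dc^2\}$ is compact and is handled by continuity and non-vanishing. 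With that correction the outline goes through. Two smaller points to tighten: in the Cauchy step you should invoke the bounded-shift estimate $M(\rho)-M(\rho-a)\leq C_a$ (which follows from $N(\rho)/\rho\to0$, i.e.\ from $(M.3)'$) rather than $M(\rho)\leq M(H\rho')+\mathrm{const}$, since the latter silently replaces $k$ by $Hk$ in the final derivative bound, whereas the statement keeps the same $k$; and in the Roumieu case $(l_p)$ must be chosen to grow slowly enough that $\prod_{j\leq p}l_j\lesssim\prod_{j\leq p}k_j$ (so that $N_{l_p}\geq N_{k_p}$ up to constants), not "rapidly enough relative to $(k_p)$" — the competing requirement is only that $l_p\nearrow\infty$, which is compatible since $(k_p)\in\mathfrak{R}$.
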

\indent We denote by $\SSS^{M_{p},m}_{2} \left(\RR^d\right)$, $m > 0$, the space of all smooth functions $\varphi$ which satisfy
\beqs
\sigma_{m,2}(\varphi ): = \left( \sum_{\alpha,\beta\in\NN^d} \int_{\RR^d} \left|\frac{m^{|\alpha|+|\beta|}\langle x\rangle^{|\alpha|}D^{\beta}\varphi(x)}{M_{\alpha}M_{\beta}}\right| ^{2} dx \right) ^{1/2}<\infty,
\eeqs
supplied with the topology induced by the norm $\sigma _{m,2}$. The spaces $\SSS'^{(M_{p})}$ and $\SSS'^{\{M_{p}\}}$ of tempered ultradistributions of Beurling and Roumieu type respectively, are defined as the strong duals of the spaces $\ds\SSS^{(M_{p})}=\lim_{\substack{\longleftarrow\\ m\rightarrow\infty}}\SSS^{M_{p},m}_{2}\left(\RR^d\right)$ and $\ds\SSS^{\{M_{p}\}}=\lim_{\substack{\longrightarrow\\ m\rightarrow 0}}\SSS^{M_{p},m}_{2}\left(\RR^d\right)$, respectively. In \cite{PilipovicK} (see also \cite{PilipovicU}) it is proved that the sequence of norms $\sigma_{m,2}$, $m > 0$, is equivalent with the sequences of norms $\|\cdot\|_{m}$, $m > 0$, where $\ds \|\varphi\|_m:=\sup_{\alpha\in \NN^d}\frac{m^{|\alpha|}\| D^{\alpha}\varphi(\cdot) e^{M(m|\cdot|)}\|_{L_{\infty}}}{M_{\alpha }}$. If we denote by $\SSS^{M_p,m}_{\infty}\left(\RR^d\right)$ the space of all infinitely differentiable functions on $\RR^d$ for which the norm $\|\cdot\|_m$ is finite (obviously it is a Banach space), then $\ds\SSS^{(M_p)}\left(\RR^d\right)=\lim_{\substack{\longleftarrow\\ m\rightarrow\infty}} \SSS^{M_p,m}_{\infty}\left(\RR^d\right)$ and $\ds\SSS^{\{M_p\}}\left(\RR^d\right)=\lim_{\substack{\longrightarrow\\ m\rightarrow 0}} \SSS^{M_p,m}_{\infty}\left(\RR^d\right)$. Also, for $m_2>m_1$, the inclusion $\SSS^{M_p,m_2}_{\infty}\left(\RR^d\right)\longrightarrow\SSS^{M_p,m_1}_{\infty}\left(\RR^d\right)$ is a compact mapping. So, $\SSS^*\left(\RR^d\right)$ is a $(FS)$ - space in $(M_p)$ case, resp. a $(DFS)$ - space in the $\{M_p\}$ case. Moreover, they are nuclear spaces. In \cite{PilipovicK} (see also \cite{PilipovicT}) it is proved that $\ds\SSS^{\{M_{p}\}} = \lim_{\substack{\longleftarrow\\ (r_{i}), (s_{j}) \in \mathfrak{R}}}\SSS^{M_{p}}_{(r_{p}),(s_{q})}$, where $\ds\SSS^{M_{p}}_{(r_{p}),(s_{q})}=\left\{\varphi \in \mathcal{C}^{\infty} \left(\RR^d\right)|\|\varphi\|_{(r_{p}),(s_{q})}<\infty\right\}$ and $\ds\|\varphi\|_{(r_{p}),(s_{q})} =\sup_{\alpha\in \NN^d}\frac{\left\|D^{\alpha}\varphi(x)e^{N_{s_p}(|x|)}\right\|_{L^{\infty}}} {M_{\alpha}\prod^{|\alpha|}_{p=1}r_{p}}$. Also, the Fourier transform is a topological automorphism of $\SSS^*$ and of $\SSS'^*$.\\
\indent We need the following kernel theorem for $\SSS'^*$. The $(M_p)$ case was already considered in \cite{LPK} (the authors used the characterization of Fourier-Hermite coefficients of the elements of the space in the proof of the kernel theorem).

\begin{proposition}\label{ktr}
The following isomorphisms of locally convex spaces hold
\beqs
&{}&\SSS^*\left(\RR^{d_1}\right)\hat{\otimes}\SSS^*\left(\RR^{d_2}\right)\cong\SSS^*\left(\RR^{d_1+d_2}\right)\cong \mathcal{L}_b\left(\SSS'^*\left(\RR^{d_1}\right),\SSS^*\left(\RR^{d_2}\right)\right),\\
&{}&\SSS'^*\left(\RR^{d_1}\right)\hat{\otimes}\SSS'^*\left(\RR^{d_2}\right)\cong\SSS'^*\left(\RR^{d_1+d_2}\right)\cong \mathcal{L}_b\left(\SSS^*\left(\RR^{d_1}\right),\SSS'^*\left(\RR^{d_2}\right)\right).
\eeqs
\end{proposition}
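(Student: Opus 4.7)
The plan is to establish the central isomorphism
\[
\SSS^{*}(\RR^{d_1+d_2}) \cong \SSS^{*}(\RR^{d_1}) \hat{\otimes} \SSS^{*}(\RR^{d_2})
\]
by a direct comparison of seminorms, and then deduce the remaining identifications from nuclearity. Nuclearity of $\SSS^{*}$ is already stated in the preliminaries, so the completed tensor product $\hat{\otimes}$ is unambiguous and the Schwartz kernel theorem for nuclear spaces (e.g.\ Tr\`eves, Ch.~50) gives $E \hat{\otimes} F \cong \mathcal{L}_{b}(E'_{b},F)$ when $E$ is a nuclear (F)- or (DFS)-space and $F$ is of the same category. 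This will supply the right-hand isomorphisms on both lines once the left-hand one is in place, and the $\SSS'^{*}$-version follows by taking strong duals and using that the strong dual of a nuclear (F)-space is a nuclear (DF)-space.

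In the Beurling case I would work with the equivalent norm $\|\cdot\|_{m}$. For rank-one tensors $\varphi_{1}\otimes\varphi_{2}$ the bound
$\|\varphi_{1}\otimes\varphi_{2}\|_{m} \leq C\,\|\varphi_{1}\|_{Hm}\|\varphi_{2}\|_{Hm}$
is a direct consequence of $(M.2)$ (to split $M_{\alpha}$ with $\alpha=(\alpha_{1},\alpha_{2})$) together with the subadditivity-type estimate $M(m|x|)+M(m|y|)\leq M(Hm|(x,y)|)+\ln C$, which is also a consequence of $(M.2)$. Thus the natural map $\SSS^{(M_p)}(\RR^{d_{1}}) \hat{\otimes} \SSS^{(M_p)}(\RR^{d_{2}}) \to \SSS^{(M_p)}(\RR^{d_{1}+d_{2}})$ is continuous. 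For the opposite direction I would use the Hermite expansion $\varphi = \sum_{\alpha,\beta}c_{\alpha\beta}\,h_{\alpha}\otimes h_{\beta}$ and the well-known characterization of $\SSS^{(M_p)}$ via the rapid decay of Hermite coefficients (the same characterization exploited in \cite{LPK}); this simultaneously proves density of the algebraic tensor product and controls the convergence in $\SSS^{(M_p)}(\RR^{d_{1}}) \hat{\otimes} \SSS^{(M_p)}(\RR^{d_{2}})$.

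The main obstacle is the Roumieu case, because the inductive-limit structure does not commute with $\hat{\otimes}$ in the naive way; here I would instead use the equivalent system $\|\cdot\|_{(r_{p}),(s_{q})}$ with its projective character. For $\varphi_{1}\otimes\varphi_{2}$ the splitting of the denominator requires an inequality of the form
\[
\prod_{p=1}^{|\alpha_{1}|+|\alpha_{2}|}r_{p} \;\geq\; c \prod_{p=1}^{|\alpha_{1}|}r'_{p}\cdot \prod_{p=1}^{|\alpha_{2}|}r''_{p},
\]
for suitable $(r'_{p}),(r''_{p})\in\mathfrak{R}$, which is exactly of the form furnished by Lemma~\ref{pst}, and a similar $(M.2)$-type subadditivity for the associated function $N_{s_{p}}(|x|)$ with $|x|^{2}=|x_{1}|^{2}+|x_{2}|^{2}$ --- here I would use the remark after Lemma~\ref{pst} which allows us to pass to a sequence whose product $M_{p}\prod k'_{j}$ enjoys $(M.2)$. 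The Hermite-expansion argument of the Beurling case then adapts, with the Roumieu characterization of the coefficients $c_{\alpha\beta}$ reducing the problem to an estimate of exactly the above type.

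Once the tensor-product identification is established in both classes, the isomorphism $\SSS^{*}(\RR^{d_{1}+d_{2}})\cong \mathcal{L}_{b}(\SSS'^{*}(\RR^{d_{1}}),\SSS^{*}(\RR^{d_{2}}))$ is a direct application of the nuclear kernel theorem. Passing to strong duals, and using that $\bigl(\SSS^{*}(\RR^{d_{1}}) \hat{\otimes} \SSS^{*}(\RR^{d_{2}})\bigr)'_{b}\cong \SSS'^{*}(\RR^{d_{1}}) \hat{\otimes} \SSS'^{*}(\RR^{d_{2}})$ (valid because both factors are nuclear (F)- or (DFS)-spaces), yields the second line of isomorphisms. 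The entire bookkeeping burden is therefore concentrated in the Roumieu seminorm splitting outlined above.
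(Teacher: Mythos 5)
Your overall architecture (establish $\SSS^{*}(\RR^{d_1+d_2})\cong\SSS^{*}(\RR^{d_1})\hat{\otimes}\SSS^{*}(\RR^{d_2})$ first, then get both $\mathcal{L}_b$ identifications from Tr\`eves, Prop.~50.5, and the dual line from the duality of completed tensor products of nuclear (F)/(DFS)-spaces) is exactly the paper's, and the "easy" direction via the rank-one estimate $\|\varphi_1\otimes\varphi_2\|_m\leq C\|\varphi_1\|_{Hm}\|\varphi_2\|_{Hm}$ coming from $(M.2)$ is also how the paper gets $\pi$ stronger than the induced topology (phrased there as continuity of the separately continuous bilinear map $(\varphi,\psi)\mapsto\varphi\otimes\psi$ on an (F)-space, resp.\ a barreled (DF)-space). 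Where you genuinely diverge is the converse inclusion: you propose Hermite expansions and the sequence-space characterization of $\SSS^{*}$, whereas the paper never touches Hermite functions. It instead proves that the $\epsilon$-topology is weaker than the induced one by taking arbitrary equicontinuous sets $A'$, $B'$ in the duals and estimating $\sup_{T\in A',F\in B'}|\langle T_x\otimes F_y,\chi\rangle|$ by $\|\chi\|_{hH}$, resp.\ $\|\chi\|_{(r_p),(r'_p)}$, using only Lemma~\ref{pst} and Komatsu's Proposition~3.6; density of the algebraic tensor product is imported from $\DD^{*}(\RR^{d_1})\otimes\DD^{*}(\RR^{d_2})\subset\DD^{*}(\RR^{d_1+d_2})$ (Komatsu II, Thm.~2.1) rather than from Hermite completeness. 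The paper's route is self-contained given its stated lemmas and works uniformly in both classes; indeed the introduction of Proposition~\ref{ktr} explicitly notes that the Hermite-coefficient method of \cite{LPK} covers only the $(M_p)$ case, and the $\epsilon$-topology argument is precisely how the author avoids it.

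The one place where your plan has real exposure is therefore the Roumieu half of your key step: you need that the Hermite functions form an absolute basis of $\SSS^{\{M_p\}}$ with a coefficient characterization strong enough to give convergence of $\sum_{\alpha,\beta}c_{\alpha\beta}\,h_{\alpha}\otimes h_{\beta}$ in the $\pi=\epsilon$ topology of $\SSS^{\{M_p\}}(\RR^{d_1})\hat{\otimes}\SSS^{\{M_p\}}(\RR^{d_2})$, uniformly over the projective family $\|\cdot\|_{(r_p),(s_q)}$. That is a true statement under $(M.1)$--$(M.3)$, but it is a substantial theorem in its own right (not contained in \cite{LPK}, which you cite, and not derivable from Lemma~\ref{pst} alone), so as written this step is an appeal to an unproved external result rather than a proof. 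If you supply that reference (or prove the basis property), your argument closes; otherwise the equicontinuous-set estimate is the cheaper way to finish.
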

\begin{proof} Note that $\SSS^*\left(\RR^{d_1}\right)\otimes\SSS^*\left(\RR^{d_2}\right)$ is dense in $\SSS^*\left(\RR^{d_1+d_2}\right)$. This is true because of the continuous and dense inclusion $\DD^*\left(\RR^{d_1+d_2}\right)\longrightarrow\SSS^*\left(\RR^{d_1+d_2}\right)$ and because $\DD^*\left(\RR^{d_1}\right)\otimes\DD^*\left(\RR^{d_2}\right)$ is dense in $\DD^*\left(\RR^{d_1+d_2}\right)$ (see theorem 2.1 of \cite{Komatsu2}). We need to prove that $\SSS^*\left(\RR^{d_1+d_2}\right)$ induces on $\SSS^*\left(\RR^{d_1}\right)\otimes\SSS^*\left(\RR^{d_2}\right)$ the topology $\pi=\epsilon$ (the $\pi$ and the $\epsilon$ topologies are the same because $\SSS^*$ is nuclear). Because the bilinear mapping $(\varphi,\psi)\mapsto \varphi\otimes\psi$, $\SSS^*\left(\RR^{d_1}\right)\times\SSS^*\left(\RR^{d_1+d_2}\right)\longrightarrow \SSS^*\left(\RR^{d_1+d_2}\right)$ is separately continuous it follows that it is continuous. This is true in the $(M_p)$ case because $\SSS^{(M_p)}$ is $(FS)$-space (hence a $F$ - space) and it is true in the $\{M_p\}$ case because $\SSS^{\{M_p\}}$ is $(DFS)$ - space (hence a barreled $(DF)$ - space). The continuity of this bilinear mapping proves that the inclusion $\SSS^*\left(\RR^{d_1}\right)\otimes_{\pi}\SSS^*\left(\RR^{d_2}\right)\longrightarrow \SSS^*\left(\RR^{d_1+d_2}\right)$ is continuous, hence the topology $\pi$ is stronger than the induced one. Let $A'$ and $B'$ be equicontinuous subsets of $\SSS^*\left(\RR^{d_1}\right)$ and $\SSS^*\left(\RR^{d_2}\right)$, respectively. There exist $h>0$ and $C>0$ such that $\ds \sup_{T\in A'}|\langle T,\varphi\rangle|\leq C\|\varphi\|_h$ and $\ds \sup_{F\in B'}|\langle F,\psi\rangle|\leq C\|\psi\|_h$ in the $(M_p)$ case, resp. there exist $(k_p),(k'_p)\in\mathfrak{R}$ and $C>0$ such that $\ds \sup_{T\in A'}|\langle T,\varphi\rangle|\leq C\|\varphi\|_{(k_p),(k'_p)}$ and $\ds \sup_{F\in B'}|\langle F,\psi\rangle|\leq C\|\psi\|_{(k_p),(k'_p)}$ in the $\{M_p\}$ case. We consider first the $\{M_p\}$ case. By lemma \ref{pst}, without losing generality we can assume that $\prod_{j=1}^{p+q} k_j\leq 2^{p+q}\prod_{j=1}^p k_j \prod_{j=1}^q k_j$, $p\in\ZZ_+$ and the same for $(k'_j)$. Put $r_j=k_j/(2H)$ and $r'_j=k'_j/(2H)$, $j\in\ZZ_+$. For all $T\in A'$ and $F\in B'$, we have
\beqs
|\langle T_x\otimes F_y,\chi(x,y)\rangle|&=&|\langle F_y,\langle T_x,\chi(x,y)\rangle\rangle|\leq C\sup_{y,\beta}\frac{|\langle T_x, D^{\beta}_y\chi(x,y)\rangle|e^{N_{k'_p}(|y|)}}{M_{\beta}\prod_{j=0}^{|\beta|}k_j}\\
&\leq& C^2\sup_{x,y,\alpha,\beta} \frac{\left|D^{\alpha}_xD^{\beta}_y\chi(x,y)\right|e^{N_{k'_p}(|x|)}e^{N_{k'_p}(|y|)}} {M_{\alpha}M_{\beta}\prod_{j=0}^{|\alpha|}k_j\prod_{j=0}^{|\beta|}k_j}\\
&\leq&c_0^2C^2\sup_{x,y,\alpha,\beta}\frac{\left|D^{\alpha}_xD^{\beta}_y\chi(x,y)\right|e^{N_{r'_j}(|(x,y)|)}} {M_{\alpha+\beta}\prod_{j=0}^{|\alpha|+|\beta|}r_j}=c_0^2C^2\|\chi\|_{(r_p),(r'_p)},
\eeqs
where, in the third inequality we used proposition 3.6 of \cite{Komatsu1} for $N_{k'_p}(\lambda)$. Similarly, in the $(M_p)$ case one obtains $\ds \sup_{T\in A',\, F\in B'}|\langle T_x\otimes F_y,\chi(x,y)\rangle|\leq c_0^2C^2\|\chi\|_{hH}$. Hence, the $\epsilon$ topology on $\SSS^*\left(\RR^{d_1}\right)\otimes\SSS^*\left(\RR^{d_2}\right)$ is weaker than the induced one from $\SSS^*\left(\RR^{d_1+d_2}\right)$. This gives the isomorphism $\SSS^*\left(\RR^{d_1}\right)\hat{\otimes}\SSS^*\left(\RR^{d_2}\right)\cong\SSS^*\left(\RR^{d_1+d_2}\right)$. Proposition 50.5 of \cite{Treves} yields the isomorphisms $\SSS^*\left(\RR^{d_1}\right)\hat{\otimes}\SSS^*\left(\RR^{d_2}\right)\cong \mathcal{L}_b\left(\SSS'^*\left(\RR^{d_1}\right),\SSS^*\left(\RR^{d_2}\right)\right)$ and $\SSS'^*\left(\RR^{d_1}\right)\hat{\otimes}\SSS'^*\left(\RR^{d_2}\right)\cong \mathcal{L}_b\left(\SSS^*\left(\RR^{d_1}\right),\SSS'^*\left(\RR^{d_2}\right)\right)$ ($\SSS^*$ is a Montel space). Now, because $\SSS^{(M_p)}$ is $(F)$ - space, theorem 9.9 of \cite{Schaefer} gives the isomorphism $\SSS'^{(M_p)}\left(\RR^{d_1}\right)\hat{\otimes}\SSS'^{(M_p)}\left(\RR^{d_2}\right)\cong \SSS'^{(M_p)}\left(\RR^{d_1+d_2}\right)$. In the $\{M_p\}$ case, $\SSS^{\{M_p\}}$ is $(DFS)$ - space, i.e. the strong dual of the $(FS)$ - space $\SSS'^{\{M_p\}}$, hence this theorem implies the same isomorphism in the $\{M_p\}$ case.
\end{proof}

\section{Definition and basic properties of the symbol classes}

Let $a\in\SSS'^{*}\left(\RR^{2d}\right)$. For $\tau\in\RR$, consider the ultradistribution
\beq\label{3}
K_{\tau}(x,y)=\mathcal{F}^{-1}_{\xi\rightarrow x-y}(a)((1-\tau)x+\tau y,\xi)\in\SSS'^{*}\left(\RR^{2d}\right).
\eeq
Let $\Op_{\tau}(a)$ be the operator from $\SSS^*$ to $\SSS'^{*}$ corresponding to the kernel $K_{\tau}(x,y)$, i.e.
\beq
\langle \Op_{\tau}(a)u,v\rangle=\langle K_{\tau},v\otimes u\rangle,\, u,v\in\SSS^{*}\left(\RR^d\right).
\eeq
$a$ will be called the $\tau$-symbol of the pseudo-differential operator $\Op_{\tau}(a)$. When $\tau=0$, we will denote $\Op_{0}(a)$ by $a(x,D)$. When $a\in\SSS^{*}\left(\RR^{2d}\right)$,
\beq\label{5}
\Op_{\tau}(a)u(x)=\frac{1}{(2\pi)^d}\int_{\RR^{2d}}e^{i(x-y)\xi}a((1-\tau)x+\tau y,\xi)u(y)dyd\xi,
\eeq
where the integral is absolutely convergent.

\begin{proposition}\label{6}
The correspondence $a\mapsto K_{\tau}$ is an isomorphism of $\SSS^{*}\left(\RR^{2d}\right)$, of $\SSS'^{*}\left(\RR^{2d}\right)$ and of $L^2\left(\RR^{2d}\right)$. The inverse map is given by
\beqs
a(x,\xi)=\mathcal{F}_{y\rightarrow\xi}K_{\tau}(x+\tau y,x-(1-\tau)y).
\eeqs
\end{proposition}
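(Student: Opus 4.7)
The plan is to factor the correspondence $a\mapsto K_\tau$ as a composition of two operations, each of which is a topological automorphism of $\SSS^*(\RR^{2d})$, of $\SSS'^*(\RR^{2d})$ and of $L^2(\RR^{2d})$. Set $b(x,w)=\mathcal{F}^{-1}_{\xi\to w}a(x,\xi)$ and let $A_\tau:\RR^{2d}\to\RR^{2d}$ be the linear map $A_\tau(x,y)=((1-\tau)x+\tau y,\, x-y)$. Then by (\ref{3}) we have $K_\tau=b\circ A_\tau$, i.e.\ the map in question is an inverse partial Fourier transform followed by pullback by $A_\tau$. A direct computation shows $\det A_\tau=(-1)^d\ne 0$ with $A_\tau^{-1}(z,w)=(z+\tau w,\, z-(1-\tau)w)$; reversing the two factors (pullback by $A_\tau^{-1}$, then $\mathcal{F}_{w\to\xi}$) recovers $a$ and yields exactly the inverse formula $a(x,\xi)=\mathcal{F}_{y\to\xi}K_\tau(x+\tau y, x-(1-\tau)y)$ claimed in the statement.

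For the partial Fourier transform, I would invoke the kernel theorem of Proposition \ref{ktr}, which yields $\SSS^*(\RR^{2d})\cong\SSS^*(\RR^d)\hat{\otimes}\SSS^*(\RR^d)$ and the analogous isomorphism for $\SSS'^*$; since $\mathcal{F}^{-1}$ is already an automorphism of each factor, $\mathrm{Id}\otimes\mathcal{F}^{-1}$ extends uniquely to an automorphism of the completed tensor product, i.e. of $\SSS^*(\RR^{2d})$ and $\SSS'^*(\RR^{2d})$. For $L^2$ the partial Fourier transform is an isometric isomorphism by Plancherel together with the tensor factorization $L^2(\RR^{2d})\cong L^2(\RR^d)\hat{\otimes}L^2(\RR^d)$.

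For the pullback $T_\tau\varphi=\varphi\circ A_\tau$ the $L^2$ case is immediate, since the Jacobian of $A_\tau$ has modulus one. In the $\SSS^*$ case, the chain rule gives $D^\alpha(\varphi\circ A_\tau)(x,y)=\sum_{|\beta|=|\alpha|}c_{\alpha\beta}(\tau)\,(D^\beta\varphi)(A_\tau(x,y))$, where the number of terms is $O(d^{|\alpha|})$ and $|c_{\alpha\beta}(\tau)|\le C_\tau^{|\alpha|}$; crucially $|\beta|=|\alpha|$, so that $M_\beta=M_\alpha$. Combined with the two-sided bound $C'_\tau |A_\tau(x,y)|\ge |(x,y)|$, this yields $\|\varphi\circ A_\tau\|_m\le C\|\varphi\|_{m'}$ in the Beurling case for a suitable $m'$ depending on $m$ and $\tau$, and analogously $\|\varphi\circ A_\tau\|_{(r_p),(s_q)}\le C\|\varphi\|_{(r'_p),(s'_q)}$ in the Roumieu case. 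The same argument applied to $A_\tau^{-1}$ gives continuity of the inverse, and continuity on $\SSS'^*(\RR^{2d})$ then follows by transposition.

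The main obstacle, and essentially the only nonroutine step, is the last seminorm estimate in the Roumieu case: absorbing the constant $C_\tau^{|\alpha|}$ from the chain rule into the product $\prod_{j=1}^{|\alpha|}r_j$ requires first replacing $(r_p)\in\mathfrak{R}$ by a subordinate sequence with the sub-multiplicative product behaviour provided by Lemma \ref{pst}, after which the constant factor is easily handled by rescaling $(r_p),(s_q)$ within the projective description of $\SSS^{\{M_p\}}$. Once this is in place, composing the partial Fourier transform with the pullback yields the desired topological isomorphism on all three spaces, and the explicit inverse is the one written in the statement.
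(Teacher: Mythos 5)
Your proposal is correct and follows essentially the same route as the paper: the paper's proof consists precisely of the observation that the map factors as the partial (inverse) Fourier transform composed with the pullback by the linear change of variables $\Xi(x,y)=((1-\tau)x+\tau y,x-y)$, each of which is an isomorphism of $\SSS^{*}\left(\RR^{2d}\right)$, $\SSS'^{*}\left(\RR^{2d}\right)$ and $L^2\left(\RR^{2d}\right)$, with the inverse formula left as an easy computation. Your write-up merely fills in the routine verifications (kernel theorem for the partial Fourier transform, chain-rule seminorm estimates for the pullback, and the Roumieu rescaling via Lemma \ref{pst}) that the paper takes for granted.
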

\begin{proof} The partial Fourier transform and the composition with the change of variable $\Xi(x,y)=((1-\tau)x+\tau y,x-y)$ are isomorphisms of $\SSS^{*}\left(\RR^{2d}\right)$, of $\SSS'^{*}\left(\RR^{2d}\right)$ and of $L^2\left(\RR^{2d}\right)$. The last part is just an easy computation.
\end{proof}
Operators with symbols in $\SSS^{*}$ correspond to kernels in $\SSS^{*}$ and by proposition \ref{ktr}, those extend to continuous operators from $\SSS'^{*}$ to $\SSS^{*}$. We will call these *-regularizing operators.\\
\indent Now we will define the announced global symbol classes. Let $A_p$ and $B_p$ be sequences that satisfy $(M.1)$, $(M.3)'$ and $A_0=1$ and $B_0=1$. Moreover, let $A_p\subset M_p$ and $B_p\subset M_p$ i.e. there exist $c_0>0$ and $L>0$ such that $A_p\leq c_0 L^pM_p$ and $B_p\leq c_0 L^pM_p$, for all $p\in\NN$ (it is obvious that without losing generality we can assume that this $c_0$ is the same with $c_0$ from the conditions $(M.2)$ and $(M.3)$ for $M_p$). For $0<\rho\leq 1$, define $\Gamma_{A_p,B_p,\rho}^{M_p,\infty}\left(\RR^{2d};h,m\right)$ as the space of all $a\in \mathcal{C}^{\infty}\left(\RR^{2d}\right)$ for which the following norm is finite
\beqs
\sup_{\alpha,\beta}\sup_{(x,\xi)\in\RR^{2d}}\frac{\left|D^{\alpha}_{\xi}D^{\beta}_x a(x,\xi)\right|
\langle (x,\xi)\rangle^{\rho|\alpha|+\rho|\beta|}e^{-M(m|\xi|)}e^{-M(m|x|)}}{h^{|\alpha|+|\beta|}A_{\alpha}B_{\beta}}.
\eeqs
It is easily verified that it is a Banach space. Define
\beqs
\Gamma_{A_p,B_p,\rho}^{(M_p),\infty}\left(\RR^{2d};m\right)=\lim_{\substack{\longleftarrow\\h\rightarrow 0}}
\Gamma_{A_p,B_p,\rho}^{M_p,\infty}\left(\RR^{2d};h,m\right),\,
\Gamma_{A_p,B_p,\rho}^{(M_p),\infty}\left(\RR^{2d}\right)=\lim_{\substack{\longrightarrow\\m\rightarrow\infty}}
\Gamma_{A_p,B_p,\rho}^{(M_p),\infty}\left(\RR^{2d};m\right),\\
\Gamma_{A_p,B_p,\rho}^{\{M_p\},\infty}\left(\RR^{2d};h\right)=\lim_{\substack{\longleftarrow\\m\rightarrow 0}}
\Gamma_{A_p,B_p,\rho}^{M_p,\infty}\left(\RR^{2d};h,m\right),\,
\Gamma_{A_p,B_p,\rho}^{\{M_p\},\infty}\left(\RR^{2d}\right)=\lim_{\substack{\longrightarrow\\h\rightarrow\infty}}
\Gamma_{A_p,B_p,\rho}^{\{M_p\},\infty}\left(\RR^{2d};h\right).
\eeqs

\begin{remark}
$\Gamma_{A_p,B_p,\rho}^{(M_p),\infty}\left(\RR^{2d};m\right)$ and $\Gamma_{A_p,B_p,\rho}^{\{M_p\},\infty}\left(\RR^{2d};h\right)$ are $(F)$ - spaces. Obviously, the inclusion mappings $\Gamma_{A_p,B_p,\rho}^{(M_p),\infty}\left(\RR^{2d};m\right)\longrightarrow\SSS'^{(M_p)}\left(\RR^{2d}\right)$ and $\Gamma_{A_p,B_p,\rho}^{\{M_p\},\infty}\left(\RR^{2d};h\right)\longrightarrow\SSS'^{\{M_p\}}\left(\RR^{2d}\right)$ are continuous, hence $\Gamma_{A_p,B_p,\rho}^{(M_p),\infty}\left(\RR^{2d}\right)$ and $\Gamma_{A_p,B_p,\rho}^{\{M_p\},\infty}\left(\RR^{2d}\right)$ are Hausdorff l.c.s. Moreover, as inductive limits of barreled and bornological l.c.s., they are barreled and bornological.
\end{remark}

\begin{remark} By proposition 7 of \cite{PBD} it follows that every element of $\Gamma_{A_p,B_p,\rho}^{*,\infty}\left(\RR^{2d}\right)$ is a multiplier for $\SSS'^*\left(\RR^{2d}\right)$.
\end{remark}

\begin{remark} Examples of nontrivial elements of $\Gamma_{A_p,B_p,\rho}^{*,\infty}\left(\RR^{2d}\right)$ are given by every ultrapolynomial of class *.
\end{remark}

\begin{proposition}\label{10}
For every $a\in\Gamma_{A_p,B_p,\rho}^{*,\infty}\left(\RR^{2d}\right)$ there exists a sequence $\chi_j$, $j\in\ZZ_+$, in $\DD^{*}\left(\RR^{2d}\right)$ such that $\chi_j\longrightarrow a$ in $\Gamma_{A_p,B_p,\rho}^{*,\infty}\left(\RR^{2d}\right)$.
\end{proposition}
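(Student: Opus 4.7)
I would prove this by a direct truncation: set $\chi_j(x,\xi):=\phi(x/j,\xi/j)\,a(x,\xi)$ for a suitable $\phi\in\DD^*(\RR^{2d})$ equal to $1$ near the origin, and verify convergence in the defining seminorms. The crux is the choice of $\phi$: because $A_p$ and $B_p$ may be strictly smaller than $M_p$, a generic $\DD^*$-cutoff would leave Leibniz remainders that cannot be reabsorbed into $A_\alpha B_\beta$. The fix is to take $\phi$ as a tensor product of cutoffs from the finer classes attached to $A_p$ and $B_p$, whose nontriviality is guaranteed by $(M.3)'$ applied to these sequences.

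Let $\phi_1\in\DD^*(\RR^d)$ of class $*$ associated with $B_p$, and $\phi_2\in\DD^*(\RR^d)$ of class $*$ associated with $A_p$, each equal to $1$ on $\{|y|\le 1\}$ and vanishing off $\{|y|\le 2\}$. Put $\phi(x,\xi):=\phi_1(x)\phi_2(\xi)$. Since $A_p,B_p\subset M_p$ one has $\phi\in\DD^*(\RR^{2d})$; the symbol bounds together with $A_p,B_p\subset M_p$ give $a\in\EE^*(\RR^{2d})$, whence $\chi_j\in\DD^*(\RR^{2d})$. Being compactly supported, each $\chi_j$ lies in every step of the inductive ladder defining $\Gamma_{A_p,B_p,\rho}^{*,\infty}(\RR^{2d})$, so only the convergence is at issue.

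Writing $a-\chi_j=a(1-\phi(\cdot/j))$ and expanding $D^\alpha_\xi D^\beta_x(a-\chi_j)$ by Leibniz, every summand is supported in $\{|x|\ge j\}\cup\{|\xi|\ge j\}$. Inserting the symbol estimate for $D^{\alpha-\alpha'}_\xi D^{\beta-\beta'}_x a$ with parameters $(h_0,m_0)$, the cutoff bounds $|D^{\beta'}\phi_1|\le C_k k^{|\beta'|}B_{\beta'}$ and $|D^{\alpha'}\phi_2|\le C_k k^{|\alpha'|}A_{\alpha'}$, and the log-convexity consequence $A_{\alpha'}A_{\alpha-\alpha'}\le A_\alpha$, $B_{\beta'}B_{\beta-\beta'}\le B_\beta$ (which follows from $(M.1)$ and $A_0=B_0=1$), the sequence factors in the $(h',m)$-norm cancel completely, and the resulting binomial sum over $\alpha'\le\alpha$, $\beta'\le\beta$ telescopes to
\[
\Bigl(\tfrac{h_0}{h'}+\tfrac{k\langle z\rangle^\rho}{h'j}\Bigr)^{|\alpha|+|\beta|}.
\]
On the relevant supports $\langle z\rangle\le 3j$ and $0<\rho\le 1$, so the base is $\le h_0/h'+3k/h'$. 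In the Beurling case $h_0$ and $k$ are free and can be taken with $h_0+3k\le h'/2$; in the Roumieu case $h_0$ is the membership step of $a$ and $k$ is determined by $\phi$, so one chooses the target step $h'>2(h_0+3k)$. Either way this base stays $\le 1$ uniformly in $|\alpha|+|\beta|$.

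It then remains to bound the exponential factor
\[
\sup_{\{|x|\ge j\}\cup\{|\xi|\ge j\}}\,e^{M(m_0|x|)+M(m_0|\xi|)-M(m|x|)-M(m|\xi|)},
\]
which vanishes as $j\to\infty$ whenever $m_0<m$, since each quotient of exponentials is $\le 1$ and the one at the variable that is $\ge j$ tends to $0$ because $M(m\rho)-M(m_0\rho)\to\infty$ as $\rho\to\infty$. In the Beurling case, taking the source parameter $m_0$ just below a target $m$ with $a\in\Gamma_{A_p,B_p,\rho}^{(M_p),\infty}(\RR^{2d};m_0)$ yields convergence in the $(h',m)$-seminorm for every $h'$, i.e.\ in the Fr\'echet space $\Gamma_{A_p,B_p,\rho}^{(M_p),\infty}(\RR^{2d};m)$. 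In the Roumieu case, using $m_0=m/2$ (legal for every $m>0$ because $a\in\Gamma_{A_p,B_p,\rho}^{\{M_p\},\infty}(\RR^{2d};h_0)$ supplies an estimate at every $m_0>0$) yields convergence in every $(h',m)$-seminorm for the chosen $h'$, i.e.\ in $\Gamma_{A_p,B_p,\rho}^{\{M_p\},\infty}(\RR^{2d};h')$. The main obstacle is the mismatch between the cutoff's natural ultradifferentiability class and the sequences $A_p,B_p$ controlling the symbol; the tensor-product construction from the $(A_p)$- and $(B_p)$-classes, together with the log-convexity $(M.1)$ of these sequences, is precisely what resolves it.
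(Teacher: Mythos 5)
Your construction is exactly the one in the paper: a tensor product of cutoffs taken from the $(B_p)$- and $(A_p)$-classes (whose existence rests on $(M.3)'$ for these sequences), dilated by $j$ and multiplied onto $a$, with the same quantifier bookkeeping in the Beurling and Roumieu cases. The paper leaves the convergence estimate as "easy to prove"; your Leibniz computation with $A_{\alpha'}A_{\alpha-\alpha'}\le A_\alpha$, the $\langle z\rangle\le 3j$ bound on the support of the differentiated cutoff, and the decay of $e^{M(m_0 t)-M(mt)}$ for $m_0<m$ correctly supplies the missing details.
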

\begin{proof} Let $\varphi(x)\in\DD^{(B_p)}\left(\RR^{d}\right)$ and $\psi(\xi)\in\DD^{(A_p)}\left(\RR^{d}\right)$, in the $(M_p)$ case, resp. $\varphi(x)\in\DD^{\{B_p\}}\left(\RR^{d}\right)$ and $\psi(\xi)\in\DD^{\{A_p\}}\left(\RR^{d}\right)$ in the $\{M_p\}$ case, are such that $0\leq\varphi,\psi\leq 1$, $\varphi(x)=1$ when $|x|\leq 1/4$, $\psi(\xi)=1$ when $|\xi|\leq 1/4$ and $\varphi(x)=0$ when $|x|\geq 1/2$, $\psi(\xi)=0$ when $|\xi|\geq 1/2$ (such functions exist because $A_p$ and $B_p$ satisfy $(M.3)'$). Put $\chi(x,\xi)=\varphi(x)\psi(\xi)$, $\chi_n(x,\xi)=\chi(x/n,\xi/n)$ for $n\in\ZZ_+$. Then $\chi,\chi_n\in\DD^{(M_p)}\left(\RR^{2d}\right)$, resp. $\chi,\chi_n\in\DD^{\{M_p\}}\left(\RR^{2d}\right)$. For $a\in \Gamma_{A_p,B_p,\rho}^{*,\infty}\left(\RR^{2d}\right)$, it is readily seen that $a_n(x,\xi)=\chi_n(x,\xi)a(x,\xi)$ is an element of $\DD^{*}\left(\RR^{2d}\right)$. It is easy to prove that there exists $m>0$ such that for every $h>0$, $a_n\longrightarrow a$ in $\Gamma_{A_p,B_p,\rho}^{M_p,\infty}\left(\RR^{2d};h,m\right)$ in the $(M_p)$ case, resp. there exists $h>0$ such that for every $m>0$, $a_n\longrightarrow a$ in $\Gamma_{A_p,B_p,\rho}^{M_p,\infty}\left(\RR^{2d};h,m\right)$ in the $\{M_p\}$ case.
\end{proof}

\begin{theorem}\label{17}
Let $a\in\Gamma_{A_p,B_p,\rho}^{*,\infty}\left(\RR^{2d}\right)$. Then the integral (\ref{5}) is well defined as an iterated integral. The ultradistribution $\Op_{\tau}(a)u$, $u\in\SSS^*$, coincides with the function defined by that iterated integral.
\end{theorem}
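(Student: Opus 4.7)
The approach is to construct the iterated integral by integrating in $y$ first, then use an ultradifferential operator argument to obtain decay in $\xi$ for the outer integration, and finally identify the resulting function with $\Op_{\tau}(a)u$ via the approximation of Proposition \ref{10}. For every fixed $(x,\xi)\in\RR^{2d}$ the inner integral
\[
F(x,\xi)=\int_{\RR^{d}}e^{-iy\xi}a((1-\tau)x+\tau y,\xi)u(y)\,dy
\]
is absolutely convergent: by the symbol estimate the integrand is bounded pointwise by $C_{x}e^{M(m|\xi|)}e^{M(m'|y|)}|u(y)|$, and this is integrable because $u\in\SSS^{*}$ decays faster than any such $e^{M(m'|\cdot|)}$. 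Differentiating under the integral sign, justified by the same dominating function, shows $F\in\mathcal{C}^{\infty}(\RR^{2d})$.

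Second, to make the outer $\xi$-integration converge I would exploit the oscillation via integration by parts in $y$. Pick an ultrapolynomial $P_{l}$ (Beurling case) or $P_{l_{p}}$ (Roumieu case) from Proposition \ref{orn} with parameters tuned so that $e^{M(m|\xi|)}/|P_{l}(\xi)|$ is integrable in $\xi$. Since $P_{l}(z)$ depends only on $z^{2}$, one has $P_{l}(D_{y})e^{-iy\xi}=P_{l}(\xi)e^{-iy\xi}$, and integration by parts yields
\[
F(x,\xi)=\frac{1}{P_{l}(\xi)}\int_{\RR^{d}}e^{-iy\xi}\,P_{l}(D_{y})\bigl[a((1-\tau)x+\tau y,\xi)u(y)\bigr]dy.
\]
Expanding $P_{l}(D_{y})$ by the Leibniz rule, then using the coefficient bound $|c_{\beta}|\leq L^{|\beta|}/M_{\beta}$ for $P_{l}$ together with the hypothesis $B_{p}\subset M_{p}$ and the decay of $u$ and its derivatives, the modulus of the right-hand side is bounded by $C_{x}e^{M(m|\xi|)}/|P_{l}(\xi)|$. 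Consequently
\[
G(x):=\frac{1}{(2\pi)^{d}}\int_{\RR^{d}}e^{ix\xi}F(x,\xi)\,d\xi
\]
is absolutely convergent and defines a smooth function of $x$; this is precisely (\ref{5}) read as an iterated integral.

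Third, I would identify $G$ with $\Op_{\tau}(a)u$. By Proposition \ref{10} choose $a_{n}\in\DD^{*}(\RR^{2d})$ with $a_{n}\to a$ in $\Gamma_{A_{p},B_{p},\rho}^{*,\infty}(\RR^{2d})$. For each $a_{n}$ the double integral in (\ref{5}) is absolutely convergent, so Fubini gives $\Op_{\tau}(a_{n})u(x)=G_{n}(x)$, where $G_{n}$ is the analogue of $G$ with $a$ replaced by $a_{n}$. Applying the integration-by-parts bound to $a-a_{n}$ and using that its $\Gamma$-seminorms tend to zero gives $G_{n}(x)\to G(x)$ pointwise. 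On the other hand, $a_{n}\to a$ in $\SSS'^{*}(\RR^{2d})$ forces $K_{\tau,n}\to K_{\tau}$ in $\SSS'^{*}(\RR^{2d})$ by Proposition \ref{6}, and hence $\Op_{\tau}(a_{n})u\to\Op_{\tau}(a)u$ in $\SSS'^{*}(\RR^{d})$ in the weak-$*$ sense. Testing both limits against $v\in\SSS^{*}$ (after establishing an $L^{1}$-type domination in $x$ using the $\Gamma$-seminorms, so dominated convergence applies) identifies $\Op_{\tau}(a)u$ with $G$.

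The main difficulty is the second step: the Leibniz expansion of $P_{l}(D_{y})[a\cdot u]$ must be summable with estimates uniform in $y$ and $\xi$, and the resulting bound on $F$ must be integrable in $\xi$. This demands a careful matching of the parameters of $P_{l}$ with the constants $h,m$ in the $\Gamma$-seminorm, the inclusion $B_{p}\subset M_{p}$, and the decay/derivative data of $u\in\SSS^{*}$. In the Roumieu case the choice of $(l_{p})\in\mathfrak{R}$ must in addition absorb arbitrary sequences from $\mathfrak{R}$ coming from $u$, for which Lemma \ref{pst} and the full flexibility of Proposition \ref{orn} become indispensable.
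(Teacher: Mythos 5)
Your overall strategy coincides with the paper's: the inner $y$-integral is controlled directly, and integrability in $\xi$ is produced by integrating by parts against an ultrapolynomial from Proposition \ref{orn} (the paper does this coefficientwise, estimating $|\xi^{\alpha}b(x,\xi)|$ and summing against $|c_{\alpha}|$, which is the same computation as your single application of $P_l(D_y)$). Your identification step, however, is genuinely different: the paper applies Fubini to $\int b(x,\xi)v(x)\,dxd\xi$ and then performs the change of variables $\Xi(x,y)=((1-\tau)x+\tau y,x-y)$ to land directly on $\langle a,\mathcal{F}_2^{-1}((v\otimes u)\circ\Xi^{-1})\rangle=\langle \Op_{\tau}(a)u,v\rangle$, whereas you approximate $a$ by $a_n\in\DD^*$ via Proposition \ref{10} and pass to the limit on both sides. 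Your route is valid (the seminorm convergence of $a_n\to a$ does give $G_n\to G$ with a uniform dominating function, and $a_n\to a$ in $\SSS'^*$ gives weak convergence of the operators via Proposition \ref{6}); it is slightly longer but avoids justifying the change of variables inside a conditionally convergent triple integral.

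There is, however, a concrete gap in your Roumieu case. You propose to tune $P_{l_p}$ so that $e^{M(m|\xi|)}/|P_{l_p}(\xi)|$ is integrable. This is impossible: by Proposition \ref{orn} a Roumieu-class $P_{l_p}$ only satisfies $|P_{l_p}(\xi)|\geq \tilde{C}e^{N_{r_p}(|\xi|)}$, and for every $(r_p)\in\mathfrak{R}$ and every $m>0$ one has $N_{r_p}(\rho)\leq M(m\rho)$ for large $\rho$, so $e^{M(m|\xi|)}e^{-N_{r_p}(|\xi|)}$ is never integrable. You cannot instead use a Beurling-class $P_l$ (which would dominate $e^{M(r|\xi|)}$), because then its coefficients only satisfy $|c_{\alpha}|\leq CL_0^{|\alpha|}/M_{\alpha}$ for some fixed $L_0$, and the geometric series $\sum_{\alpha}(L_0(|\tau|hL+1/h_1))^{|\alpha|}$ need not converge since $h$ is now fixed by the symbol. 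The missing idea, which the paper supplies, is to first upgrade the symbol's growth: from ``for every $m$ there is $C_m$ with $|D^{\alpha}_{\xi}D^{\beta}_xa|\leq C_m(\cdots)e^{M(m|\xi|)}e^{M(m|x|)}$'' one extracts, via Lemma \ref{15} and Lemma 3.12 of Komatsu, a single sequence $(k_p)\in\mathfrak{R}$ with $|D^{\alpha}_{\xi}D^{\beta}_xa|\leq C(\cdots)e^{N_{k_p}(|\xi|)}e^{N_{k_p}(|x|)}$; only then can one choose $(r_p)$ with $\int e^{N_{k_p}(|\xi|)}e^{-N_{r_p}(|\xi|)}d\xi<\infty$ and a Roumieu $P_{l_p}$ dominating $e^{N_{r_p}(|\xi|)}$ whose coefficients can be made as small as needed. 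Note also that the relevant sequence from $\mathfrak{R}$ comes from the symbol $a$ through this subordinate-function argument, not from $u$ as your last paragraph suggests; the sequence $(s_p)$ attached to $u$ enters only in the harmless $y$-integration.
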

\begin{proof} The $(M_p)$ case. Because $a\in\Gamma_{A_p,B_p,\rho}^{(M_p),\infty}\left(\RR^{2d}\right)$, there exists $m>0$ such that, for every $h>0$ there exists $C_1>0$ such that
\beqs
\left|D^{\alpha}_{\xi}D^{\beta}_x a(x,\xi)\right|\leq C_1\frac{h^{|\alpha|+|\beta|}A_{\alpha}B_{\beta}e^{M(m|\xi|)}e^{M(m|x|)}}
{\langle (x,\xi)\rangle^{\rho|\alpha|+\rho|\beta|}},\, \forall \alpha,\beta\in\NN^d,\, \forall (x,\xi)\in\RR^{2d}.
\eeqs
Hence, $\ds b(x,\xi)=\int_{\RR^d}e^{i(x-y)\xi}a((1-\tau)x+\tau y,\xi)u(y)dy$ is well defined and $b\in \mathcal{C}^{\infty}\left(\RR^{2d}\right)$. Choose $m_0>0$ large enough such that, for all $m'\geq m_0$, $\ds\int_{\RR^d}e^{M(2m|\tau y|)}e^{-M(m'|y|)}dy<\infty$. Because $u\in\SSS^{(M_p)}$, for such $m'$ we get $\ds \sup_{\alpha\in\NN^d}\frac{m'^{|\alpha|}\left\|D^{\alpha}u(y)e^{M(m'|y|)}\right\|_{L^{\infty}}}{M_{\alpha}}<\infty$. One obtains\\
$\ds \left|\xi^{\alpha}b(x,\xi)\right|$
\beqs
&=&\left|\int_{\RR^d}e^{i(x-y)\xi}D^{\alpha}_y\left(a((1-\tau)x+\tau y,\xi)u(y)\right)dy\right|\\
&\leq&\sum_{\gamma\leq\alpha}{\alpha\choose\gamma}\int_{\RR^d}|\tau|^{|\gamma|}
\left|D^{\gamma}_xa((1-\tau)x+\tau y,\xi)\right|\left|D^{\alpha-\gamma}u(y)\right|dy\\
&\leq&C\sum_{\gamma\leq\alpha}{\alpha\choose\gamma}\int_{\RR^d}|\tau|^{|\gamma|}
h^{|\gamma|}B_{\gamma}e^{M(m|\xi|)}e^{M(m|(1-\tau)x+\tau y|)}
\frac{M_{\alpha-\gamma}e^{-M(m'|y|)}}{m'^{|\alpha|-|\gamma|}}dy\\
&\leq&C'\sum_{\gamma\leq\alpha}{\alpha\choose\gamma}\int_{\RR^d}(|\tau|hL)^{|\gamma|} M_{\gamma}e^{M(m|\xi|)}e^{M(2m|(1-\tau)x|)}e^{M(2m|\tau y|)}
\frac{M_{\alpha-\gamma}e^{-M(m'|y|)}}{m'^{|\alpha|-|\gamma|}}dy\\
&\leq&C''e^{M(m|\xi|)}e^{M(2m|(1-\tau)x|)}M_{\alpha}\left(|\tau|hL+\frac{1}{m'}\right)^{|\alpha|},
\eeqs
where we used $B_p\subset M_p$. For $l>0$ consider $P_l(\xi)$. By proposition \ref{orn}, we can choose $l$ such that $|P_l(\xi)|\geq c''e^{M(r|\xi|)}$ where $r>0$ is such that $\ds \int_{\RR^d}e^{M(m|\xi|)}e^{-M(r|\xi|)}d\xi<\infty$ and $P_l(\xi)$ is never zero. Also, if we represent $\ds P_l(\xi)=\sum_{\alpha}c_{\alpha}\xi^{\alpha}$, there exists $L'>0$ and $C'>0$ such that $|c_{\alpha}|\leq C'L'^{|\alpha|}/M_{\alpha}$. Choose $h>0$ so small and $m'\geq m_0$ so large such that $\ds \left(|\tau|hL+\frac{1}{m'}\right)L'<\frac{1}{4}$. Then, we have
\beqs
\left|P_l(\xi)b(x,\xi)\right|&\leq&\sum_{\alpha}|c_{\alpha}|\left|\xi^{\alpha}b(x,\xi)\right|\leq C''e^{M(m|\xi|)}e^{M(2m|(1-\tau)x|)}\sum_{\alpha}|c_{\alpha}|M_{\alpha}
\left(|\tau|hL+\frac{1}{m'}\right)^{|\alpha|}\\
&\leq& C_0e^{M(m|\xi|)}e^{M(2m|(1-\tau)x|)}.
\eeqs
Hence $\ds \int_{\RR^d}|b(x,\xi)|d\xi$ is finite for every $x$, i.e. (\ref{5}) is well defined as iterated integral. From this estimate also follows that $b(x,\xi)v(x)\in L^1\left(\RR^{2d}\right)$, for any $v\in\SSS^{(M_p)}$.\\
\indent Let us consider the $\{M_p\}$ case. Because $a\in\Gamma_{A_p,B_p,\rho}^{\{M_p\},\infty}\left(\RR^{2d}\right)$, there exists $h>0$ such that, for every $m>0$ there exists $C_1>0$ such that
\beqs
\left|D^{\alpha}_{\xi}D^{\beta}_x a(x,\xi)\right|\leq C_1\frac{h^{|\alpha|+|\beta|}A_{\alpha}B_{\beta}e^{M(m|\xi|)}e^{M(m|x|)}}
{\langle (x,\xi)\rangle^{\rho|\alpha|+\rho|\beta|}},\, \forall \alpha,\beta\in\NN^d,\, \forall (x,\xi)\in\RR^{2d}.
\eeqs
Hence, $\ds b(x,\xi)=\int_{\RR^d}e^{i(x-y)\xi}a((1-\tau)x+\tau y,\xi)u(y)dy$ is well defined and $b\in \mathcal{C}^{\infty}\left(\RR^{2d}\right)$. Put
\beqs
g(\lambda)=\sup_{|(x,\xi)|\leq\lambda}\sup_{\alpha,\beta}\ln_+\frac{\left|D^{\alpha}_{\xi}D^{\beta}_x a(x,\xi)\right|
\langle(x,\xi)\rangle^{\rho|\alpha|+\rho|\beta|}}{h^{|\alpha|+|\beta|}A_{\alpha}B_{\beta}}.
\eeqs
$g$ is an increasing function and by proposition 3.6 of \cite{Komatsu1}, it satisfies the condition of lemma \ref{15}. Hence, there exists subordinate function $\epsilon(\lambda)$ and a constant $C'>1$ such that $g(\lambda)\leq M(\epsilon(\lambda))+\ln C'$. We get that
\beqs
\left|D^{\alpha}_{\xi}D^{\beta}_x a(x,\xi)\right|\leq C'\frac{h^{|\alpha|+|\beta|}A_{\alpha}B_{\beta}e^{M(\epsilon(|(x,\xi)|))}}
{\langle(x,\xi)\rangle^{\rho|\alpha|+\rho|\beta|}},\, \forall \alpha,\beta\in\NN^d,\, \forall (x,\xi)\in\RR^{2d}.
\eeqs
By lemma 3.12 of \cite{Komatsu1}, there exist another sequence $\tilde{N}_p$, which satisfies $(M.1)$, such that $\tilde{N}(\lambda)\geq M(\epsilon(\lambda))$ and $k'_p=\tilde{n}_p/m_p\longrightarrow\infty$ when $p\longrightarrow\infty$. There exist $(k''_p)\in\mathfrak{R}$ such that $k''_p\leq k'_p$, for $p\in\ZZ_+$. Then
\beqs
e^{N_{k''_p}(\lambda)}=\sup_p\frac{\lambda^p}{M_p\prod_{j=1}^p k''_j}\geq \sup_p\frac{\lambda^p}{M_p\prod_{j=1}^p k'_j}=e^{\tilde{N}(\lambda)}\geq e^{M(\epsilon(\lambda))}.
\eeqs
From this, we obtain the estimate
\beq
\left|D^{\alpha}_{\xi}D^{\beta}_x a(x,\xi)\right|\leq C\frac{h^{|\alpha|+|\beta|}A_{\alpha}B_{\beta}e^{N_{k_p}(|\xi|)}e^{N_{k_p}(|x|)}}{\langle (x,\xi)\rangle^{\rho|\alpha|+\rho|\beta|}},\, \forall \alpha,\beta\in\NN^d,\, \forall (x,\xi)\in\RR^{2d},
\eeq
where we choose $(k_p)\in\mathfrak{R}$ such that $e^{N_{k''_p}(|(x,\xi)|)}\leq c' e^{N_{k_p}(|\xi|)}e^{N_{k_p}(|x|)}$, for some $c'>0$. Because $u\in\SSS^{\{M_p\}}$, there exists $h_1>0$ such that for every $(s_p)\in\mathfrak{R}$, $\ds \sup_{\alpha}\frac{h_1^{|\alpha|}\left\|e^{N_{s_p}(|x|)}D^{\alpha}u(x)\right\|_{L^{\infty}}}{M_{\alpha}}<\infty$. Choose $(s_p)\in\mathfrak{R}$, such that $\ds\int_{\RR^d}e^{N_{k_p/2}(|\tau y|)}e^{-N_{s_p}(|y|)}dy<\infty$. Then, we have\\
$\ds \left|\xi^{\alpha}b(x,\xi)\right|$
\beqs
&=&\left|\int_{\RR^d}e^{i(x-y)\xi}D^{\alpha}_y\left(a((1-\tau)x+\tau y,\xi)u(y)\right)dy\right|\\
&\leq&\sum_{\gamma\leq\alpha}{\alpha\choose\gamma}\int_{\RR^d}|\tau|^{|\gamma|}
\left|D^{\gamma}_xa((1-\tau)x+\tau y,\xi)\right|\left|D^{\alpha-\gamma}u(y)\right|dy\\
&\leq&C'\sum_{\gamma\leq\alpha}{\alpha\choose\gamma}\int_{\RR^d}|\tau|^{|\gamma|}
h^{|\gamma|}B_{\gamma}e^{N_{k_p}(|\xi|)}e^{N_{k_p}(|(1-\tau)x+\tau y|)}
\frac{e^{-N_{s_p}(|y|)}M_{\alpha-\gamma}}{h_1^{|\alpha|-|\gamma|}}dy\\
&\leq&C''\sum_{\gamma\leq\alpha}{\alpha\choose\gamma}\int_{\RR^d}
\frac{(|\tau|hL)^{|\gamma|}M_{\gamma}e^{N_{k_p}(|\xi|)}e^{N_{k_p/2}(|(1-\tau)x|)}e^{N_{k_p/2}(|\tau y|)}e^{-N_{s_p}(|y|)}M_{\alpha-\gamma}}{h_1^{|\alpha|-|\gamma|}}dy\\
&\leq&C\sum_{\gamma\leq\alpha}{\alpha\choose\gamma}
\frac{(|\tau|hL)^{|\gamma|}e^{N_{k_p}(|\xi|)}e^{N_{k_p/2}(|(1-\tau)x|)}M_{\alpha}}
{h_1^{|\alpha|-|\gamma|}}\\
&=&Ce^{N_{k_p}(|\xi|)}e^{N_{k_p/2}(|(1-\tau)x|)}M_{\alpha}
\left(|\tau|hL+\frac{1}{h_1}\right)^{|\alpha|},
\eeqs
where we used $B_p\subset M_p$. For $(l_p)\in\mathfrak{R}$ consider $P_{l_p}(\xi)$. By proposition \ref{orn} we can choose $(l_p)\in\mathfrak{R}$ such that $|P_{l_p}(\xi)|\geq c''e^{N_{r_p}(|\xi|)}$ where $(r_p)\in \mathfrak{R}$ is such that $\ds \int_{\RR^d}e^{N_{k_p}(|\xi|)}e^{-N_{r_p}(|\xi|)}d\xi<\infty$ and $P_{l_p}(\xi)$ is never zero. Also, if we represent $\ds P_{l_p}(\xi)=\sum_{\alpha}c_{\alpha}\xi^{\alpha}$, then for any $L'>0$ there exists $C'>0$ such that $|c_{\alpha}|\leq C'L'^{|\alpha|}/M_{\alpha}$. Choose $L'>0$ such that, $\ds \left(|\tau|hL+\frac{1}{h_1}\right)L'<\frac{1}{4}$. By the above estimate, we have
\beqs
\left|P_{l_p}(\xi)b(x,\xi)\right|&\leq&\sum_{\alpha}|c_{\alpha}|\left|\xi^{\alpha}b(x,\xi)\right|\leq Ce^{N_{k_p}(|\xi|)}e^{N_{k_p/2}(|(1-\tau)x|)}\sum_{\alpha}|c_{\alpha}|M_{\alpha}
\left(|\tau|hL+\frac{1}{h_1}\right)^{|\alpha|}\\
&\leq& C_0e^{N_{k_p}(|\xi|)}e^{N_{k_p/2}(|(1-\tau)x|)}.
\eeqs
Hence $\ds \int_{\RR^d}|b(x,\xi)|d\xi$ is finite for every $x$, i.e. (\ref{5}) is well defined as iterated integral. From this estimate also follows that $b(x,\xi)v(x)\in L^1\left(\RR^{2d}\right)$, for any $v\in\SSS^{\{M_p\}}$.\\
\indent Hence, in both cases we get that $\ds \int_{\RR^d}|b(x,\xi)|d\xi$ is finite for every $x$, i.e. (\ref{5}) is well defined as iterated integral, and $b(x,\xi)v(x)\in L^1\left(\RR^{2d}\right)$, for any $v\in\SSS^*$. We will temporary denote $\ds F(x)=\frac{1}{(2\pi)^d}\int_{\RR^d}b(x,\xi)d\xi$. From the above estimates it is obvious that $F\in\SSS'^*$. By Fubini's theorem, we have
\beqs
\langle F,v\rangle=\frac{1}{(2\pi)^d}\int_{\RR^d}\int_{\RR^d}\int_{\RR^d}e^{i(x-y)\xi}a((1-\tau)x+\tau y,\xi)u(y)v(x)dydxd\xi.
\eeqs
By the growth condition of $a$, it is obvious that the integral
\beqs
\int_{\RR^{2d}}e^{i(x-y)\xi}a((1-\tau)x+\tau y,\xi)u(y)v(x)dydx
\eeqs
converges, so if we put the change of variable $\Xi(x,y)=((1-\tau)x+\tau y,x-y)$ in the last term of the above equality we obtain $\langle F,v\rangle=\left\langle a,\mathcal{F}^{-1}_2\left((v\otimes u)\circ\Xi^{-1}\right)\right\rangle=\langle \Op_{\tau}(a)u,v\rangle$, which completes the proof of the theorem.
\end{proof}

We will define more general classes of operators and symbols.

\begin{definition}
Denote by $\Pi_{A_p,B_p,\rho}^{M_p,\infty}\left(\RR^{3d};h,m\right)$ the Banach space of all $a\in C^{\infty}\left(\RR^{3d}\right)$ with the norm
\beqs
\sup_{\alpha,\beta,\gamma\in\NN^d}\sup_{(x,y,\xi)\in\RR^{3d}}
\frac{\left|D^{\alpha}_{\xi}D^{\beta}_xD^{\gamma}_y a(x,y,\xi)\right|\langle (x,y,\xi)\rangle^{\rho|\alpha|+\rho|\beta|+\rho|\gamma|}
e^{-M(m|\xi|)}e^{-M(m|x|)}e^{-M(m|y|)}}{h^{|\alpha|+|\beta|+|\gamma|}\langle x-y\rangle^{\rho|\alpha|+\rho|\beta|+\rho|\gamma|}A_{\alpha}B_{\beta+\gamma}}
\eeqs
\end{definition}

We will denote this norm by $\|\cdot\|_{h,m,\Pi}$. Define
\beqs
\Pi_{A_p,B_p,\rho}^{(M_p),\infty}\left(\RR^{3d};m\right)=\lim_{\substack{\longleftarrow\\h\rightarrow 0}}
\Pi_{A_p,B_p,\rho}^{M_p,\infty}\left(\RR^{3d};h,m\right),\,
\Pi_{A_p,B_p,\rho}^{(M_p),\infty}\left(\RR^{3d}\right)=\lim_{\substack{\longrightarrow\\m\rightarrow\infty}}
\Pi_{A_p,B_p,\rho}^{(M_p),\infty}\left(\RR^{3d};m\right),\\
\Pi_{A_p,B_p,\rho}^{\{M_p\},\infty}\left(\RR^{3d};h\right)=\lim_{\substack{\longleftarrow\\m\rightarrow 0}}
\Pi_{A_p,B_p,\rho}^{M_p,\infty}\left(\RR^{3d};h,m\right),\,
\Pi_{A_p,B_p,\rho}^{\{M_p\},\infty}\left(\RR^{3d}\right)=\lim_{\substack{\longrightarrow\\h\rightarrow\infty}}
\Pi_{A_p,B_p,\rho}^{\{M_p\},\infty}\left(\RR^{3d};h\right).
\eeqs
$\Pi_{A_p,B_p,\rho}^{(M_p),\infty}\left(\RR^{3d};m\right)$ and $\Pi_{A_p,B_p,\rho}^{\{M_p\},\infty}\left(\RR^{3d};h\right)$ are $(F)$ - spaces. Similarly as for $\Gamma_{A_p,B_p,\rho}^{*,\infty}\left(\RR^{2d}\right)$, one proves that $\Pi_{A_p,B_p,\rho}^{*,\infty}\left(\RR^{3d}\right)$ are barreled and bornological l.c.s.\\
\indent One easily sees that, for $a\in \Pi_{A_p,B_p,\rho}^{*,\infty}\left(\RR^{3d}\right)$, the function $b(x,\xi)=a(x,x,\xi)$ belongs to $\Gamma_{A_p,B_p,\rho}^{*,\infty}\left(\RR^{2d}\right)$. Moreover, if $p\in \Gamma_{A_p,B_p,\rho}^{*,\infty}\left(\RR^{2d}\right)$ and $\tau\in\RR$, then $a(x,y,\xi)=p((1-\tau)x+\tau y,\xi)$ belongs to $\Pi_{A_p,B_p,\rho}^{*,\infty}\left(\RR^{3d}\right)$.

\begin{remark} The $\Gamma$ and $\Pi$ classes defined here are appropriate generalization (for symbols of infinite order) in ultradistributional setting of the corresponding classes in the setting of Schwartz distributions (see \cite{SchwartzK} for the theory od distributions and \cite{Shubin} for the corresponding $\Gamma$ and $\Pi$ symbol classes and calculus in the setting of Schwartz distributions).
\end{remark}

\begin{lemma}\label{nn1}
Let $h>0$ be fixed. For every bounded set $B$ in $\Pi_{A_p,B_p,\rho}^{\{M_p\},\infty}\left(\RR^{3d};h\right)$, there exist $C>0$ and $(k_p)\in\mathfrak{R}$ such that, for all $a\in B$,
\beqs
\sup_{\alpha,\beta,\gamma\in\NN^d}\sup_{(x,y,\xi)\in\RR^{3d}}
\frac{\left|D^{\alpha}_{\xi}D^{\beta}_xD^{\gamma}_y a(x,y,\xi)\right|\langle (x,y,\xi)\rangle^{\rho|\alpha|+\rho|\beta|+\rho|\gamma|}
e^{-N_{k_p}(|\xi|)}e^{-N_{k_p}(|x|)}e^{-N_{k_p}(|y|)}}{h^{|\alpha|+|\beta|+|\gamma|}\langle x-y\rangle^{\rho|\alpha|+\rho|\beta|+\rho|\gamma|}A_{\alpha}B_{\beta+\gamma}}\leq C.
\eeqs
\end{lemma}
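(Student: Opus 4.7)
Since $\Pi_{A_p,B_p,\rho}^{\{M_p\},\infty}\left(\RR^{3d};h\right)$ carries the projective limit topology $\lim_{\substack{\longleftarrow\\ m\to 0}}\Pi_{A_p,B_p,\rho}^{M_p,\infty}\left(\RR^{3d};h,m\right)$, the boundedness of $B$ is exactly the statement that for every $m>0$ there exists $C_m>0$ with $\|a\|_{h,m,\Pi}\leq C_m$ uniformly in $a\in B$. The plan is to rerun the subordinate-function argument from the $\{M_p\}$ part of the proof of Theorem \ref{17}, with the supremum over $a\in B$ built into the auxiliary function from the outset.

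More precisely, I would introduce
\beqs
g(\lambda)=\sup_{|(x,y,\xi)|\leq\lambda}\sup_{a\in B}\sup_{\alpha,\beta,\gamma\in\NN^d}\ln_+\frac{\left|D^{\alpha}_{\xi}D^{\beta}_xD^{\gamma}_y a(x,y,\xi)\right|\langle(x,y,\xi)\rangle^{\rho|\alpha|+\rho|\beta|+\rho|\gamma|}}{h^{|\alpha|+|\beta|+|\gamma|}\langle x-y\rangle^{\rho|\alpha|+\rho|\beta|+\rho|\gamma|}A_{\alpha}B_{\beta+\gamma}}.
\eeqs
This function is increasing in $\lambda$ by construction. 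The bound $\|a\|_{h,m,\Pi}\leq C_m$ together with $|x|,|y|,|\xi|\leq\lambda$ on the set over which the sup is taken yields $g(\lambda)\leq M(m|\xi|)+M(m|x|)+M(m|y|)+\ln C_m\leq 3M(m\lambda)+\ln C_m$. Proposition 3.6 of \cite{Komatsu1} (iterated) lets me absorb the factor $3$ into the arbitrary parameter $m$, so that for every $L>0$ there exists $C_L>0$ with $g(\lambda)\leq M(L\lambda)+\ln C_L$. At this point Lemma \ref{15} applies and produces a subordinate function $\epsilon$ and a constant $C'>1$ with $g(\lambda)\leq M(\epsilon(\lambda))+\ln C'$.

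The remaining task is to transfer this into the desired $N_{k_p}$-estimate, which is done by the same chain of steps as in the proof of Theorem \ref{17}. Lemma 3.12 of \cite{Komatsu1} supplies a sequence $\tilde N_p$ satisfying $(M.1)$ with $\tilde N(\lambda)\geq M(\epsilon(\lambda))$ and $\tilde n_p/m_p\to\infty$; monotonizing and then applying Lemma \ref{pst} gives $(\bar k_p)\in\mathfrak{R}$ with the stability $\prod_{j=1}^{p+q}\bar k_j\leq 2^{p+q}\prod_{j=1}^{p}\bar k_j\prod_{j=1}^{q}\bar k_j$ and with $N_{\bar k_p}(\lambda)\geq M(\epsilon(\lambda))$, so that $g(\lambda)\leq N_{\bar k_p}(\lambda)+\ln C'$. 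The multiplicative stability is exactly an $(M.2)$-property for the sequence $M_p\prod_{j=1}^{p}\bar k_j$ (with constant $2H$), and the computation that yields $2M(\rho)\leq M(H\rho)+\ln c_0$ adapts verbatim to give a three-variable splitting: choosing $(k_p)\in\mathfrak{R}$ small enough, one has $N_{\bar k_p}(|(x,y,\xi)|)\leq c''+N_{k_p}(|x|)+N_{k_p}(|y|)+N_{k_p}(|\xi|)$. Plugging this back into $g(\lambda)\leq N_{\bar k_p}(\lambda)+\ln C'$ and unwinding the definition of $g$ yields the claim.

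The main obstacle is the bookkeeping in the last step, where a single $(k_p)\in\mathfrak{R}$ must be produced which both dominates $M(\epsilon(\cdot))$ and permits the three-factor splitting. Everything else is a rerun of the Roumieu argument already executed in the last half of the proof of Theorem \ref{17}, the uniformity in $a\in B$ entering only through the constants $C_m$ that appear before Lemma \ref{15} is invoked.
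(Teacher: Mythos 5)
Your proposal is correct and follows essentially the same route as the paper: uniform constants $C_m$ from boundedness, the increasing auxiliary function $g(\lambda)$ with the supremum over $a\in B$ absorbed (the paper first defines $g_a$ and $\tilde g_a$ per element and then takes $\sup_{a\in B}$, which is the same thing), Proposition 3.6 of Komatsu to merge the three exponentials, Lemma \ref{15} for the subordinate function, Lemma 3.12 of Komatsu plus passage to a sequence in $\mathfrak{R}$, and finally the three-variable splitting of $N_{k_p}$. The only difference is that you spell out the last splitting step via Lemma \ref{pst}, which the paper leaves implicit; this changes nothing of substance.
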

\begin{proof} Because $B$ is bounded, for every $m>0$ there exists a constant $C_m>0$ (which depends on $m$) such that, for every $a\in B$, $\|a\|_{h,m,\Pi}\leq C_m$, i.e.
\beqs
\frac{\left|D^{\alpha}_{\xi}D^{\beta}_xD^{\gamma}_y a(x,y,\xi)\right|\langle (x,y,\xi)\rangle^{\rho|\alpha|+\rho|\beta|+\rho|\gamma|}}
{h^{|\alpha|+|\beta|+|\gamma|}\langle x-y\rangle^{\rho|\alpha|+\rho|\beta|+\rho|\gamma|}A_{\alpha}B_{\beta+\gamma}}\leq C_m e^{M(m|\xi|)}e^{M(m|x|)}e^{M(m|y|)},
\eeqs
for all $(x,y,\xi)\in\RR^{3d}$ and all $\alpha,\beta,\gamma\in\NN^d$. Without losing generality, we can take $C_m\geq 1$. Put
\beqs
g_a(x,y,\xi)=\sup_{\alpha,\beta,\gamma}\ln_+\left(\frac{\left|D^{\alpha}_{\xi}D^{\beta}_xD^{\gamma}_y a(x,y,\xi)\right|\langle (x,y,\xi)\rangle^{\rho|\alpha|+\rho|\beta|+\rho|\gamma|}}
{h^{|\alpha|+|\beta|+|\gamma|}\langle x-y\rangle^{\rho|\alpha|+\rho|\beta|+\rho|\gamma|}A_{\alpha}B_{\beta+\gamma}}\right).
\eeqs
Then, by proposition 3.6 of \cite{Komatsu1}, we have
\beqs
g_a(x,y,\xi)&\leq& M(m|\xi|)+M(m|x|)+M(m|y|)+\ln C_m\leq 3M(m|(x,y,\xi)|)+\ln C_m\\
&\leq& M(mH^2|(x,y,\xi)|)+\ln(c_0^2C_m).
\eeqs
Now, define $\ds\tilde{g}_a(\lambda)=\sup_{|(x,y,\xi)|\leq \lambda}g_a(x,y,\xi)$. Then $\ds\tilde{g}_a(\lambda)\leq M(mH^2\lambda)+\ln(c_0^2C_m)$, for $\lambda\geq 0$ and $a\in B$. Then, if we put $\ds \tilde{g}(\lambda)=
\sup_{a\in B}\tilde{g}_a(\lambda)$, we have $\ds \tilde{g}(\lambda)\leq M(mH^2\lambda)+\ln(c_0^2C_m)$, for $\lambda\geq 0$. $\ds \tilde{g}_a(\lambda)$ is an increasing function of $\lambda$ for every $a\in B$, hence $\ds \tilde{g}(\lambda)$ is an increasing function of $\lambda$. So $\tilde{g}$ satisfies the conditions in lemma \ref{15}. Hence, there exist subordinate function $\epsilon(\lambda)$ and a constant $C'>1$ such that $\tilde{g}(\lambda)\leq M(\epsilon(\lambda))+\ln C'$. We get that
\beqs
\ln_+\left(\frac{\left|D^{\alpha}_{\xi}D^{\beta}_xD^{\gamma}_y a(x,y,\xi)\right|\langle (x,y,\xi)\rangle^{\rho|\alpha|+\rho|\beta|+\rho|\gamma|}}
{h^{|\alpha|+|\beta|+|\gamma|}\langle x-y\rangle^{\rho|\alpha|+\rho|\beta|+\rho|\gamma|}A_{\alpha}B_{\beta+\gamma}}\right)\leq \tilde{g}\left(|(x,y,\xi)|\right)\leq M\left(\epsilon\left(|(x,y,\xi)|\right)\right)+\ln C',
\eeqs
for all $(x,y,\xi)\in\RR^{3d}$, $\alpha,\beta,\gamma\in\NN^d$ and $a\in B$, i.e.
\beqs
\frac{\left|D^{\alpha}_{\xi}D^{\beta}_xD^{\gamma}_y a(x,y,\xi)\right|\langle (x,y,\xi)\rangle^{\rho|\alpha|+\rho|\beta|+\rho|\gamma|}}
{h^{|\alpha|+|\beta|+|\gamma|}\langle x-y\rangle^{\rho|\alpha|+\rho|\beta|+\rho|\gamma|}A_{\alpha}B_{\beta+\gamma}}\leq C' e^{M\left(\epsilon\left(|(x,y,\xi)|\right)\right)},
\eeqs
for all $(x,y,\xi)\in\RR^{3d}$, $\alpha,\beta,\gamma\in\NN^d$ and $a\in B$. By lemma 3.12 of \cite{Komatsu1}, there exists another sequence $\tilde{N}_p$, which satisfies $(M.1)$, such that $\tilde{N}(\lambda)\geq M(\epsilon(\lambda))$ and $k'_p=\tilde{n}_p/m_p\longrightarrow\infty$ when $p\longrightarrow\infty$. There exists $(k''_p)\in\mathfrak{R}$ such that $k''_p\leq k'_p$, for $p\in\ZZ_+$. Then
\beqs
e^{N_{k''_p}(\lambda)}=\sup_p\frac{\lambda^p}{M_p\prod_{j=1}^p k''_j}\geq \sup_p\frac{\lambda^p}{M_p\prod_{j=1}^p k'_j}=\sup_p\frac{\lambda^p \tilde{N}_0}{\tilde{N}_p}=e^{\tilde{N}(\lambda)}\geq e^{M(\epsilon(\lambda))}.
\eeqs
From this, we obtain the estimate
\beqs
\frac{\left|D^{\alpha}_{\xi}D^{\beta}_xD^{\gamma}_y a(x,y,\xi)\right|\langle (x,y,\xi)\rangle^{\rho|\alpha|+\rho|\beta|+\rho|\gamma|}}
{h^{|\alpha|+|\beta|+|\gamma|}\langle x-y\rangle^{\rho|\alpha|+\rho|\beta|+\rho|\gamma|}A_{\alpha}B_{\beta+\gamma}}\leq C e^{N_{k_p}\left(|\xi|\right)}e^{N_{k_p}\left(|x|\right)}e^{N_{k_p}\left(|y|\right)},
\eeqs
for all $(x,y,\xi)\in\RR^{3d}$, $\alpha,\beta,\gamma\in\NN^d$ and $a\in B$, where we choose $(k_p)\in\mathfrak{R}$ such that $e^{N_{k''_p}(|(x,y,\xi)|)}\leq c' e^{N_{k_p}(|\xi|)}e^{N_{k_p}(|x|)}e^{N_{k_p}(|y|)}$, for some constant $c'>0$.
\end{proof}

\begin{lemma}\label{45}
Let $a\in \Pi_{A_p,B_p,\rho}^{*,\infty}\left(\RR^{3d}\right)$. For $\delta>0$ and $u,\chi\in\SSS^*\left(\RR^d\right)$, such that $\chi(0)=1$, define
\beqs
I_{\chi,\delta}(x)=\frac{1}{(2\pi)^d}\int_{\RR^{2d}}e^{i(x-y)\xi}a(x,y,\xi)\chi(\delta\xi)u(y)dyd\xi.
\eeqs
Then $I_{\chi,\delta}(x)$ has a limit when $\delta\longrightarrow 0^+$ and the limit doesn't depend on $\chi$. Moreover, the limit function is continuous and has ultrapolynomial growth of class *.
\end{lemma}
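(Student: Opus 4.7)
The plan is to regularize the oscillatory integral by integrating by parts in $y$ against an ultrapolynomial of class $*$ (furnished by Proposition \ref{orn}), thereby producing a representation of $I_{\chi,\delta}(x)$ whose integrand admits an $L^1(\RR^{2d})$-dominant independent of $\delta$, and then passing to the limit $\delta\to 0^+$ by dominated convergence. For each fixed $\delta > 0$, the factor $\chi(\delta\,\cdot\,)\in\SSS^*$ already provides enough decay in $\xi$ to make $I_{\chi,\delta}(x)$ well defined as an absolutely convergent double integral, so the integration by parts below is legitimate.

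Choose $P_l$ in the $(M_p)$ case (resp.\ $P_{l_p}$ in the $\{M_p\}$ case) with parameter selected so that $|P_l(\xi)|^{-1}e^{M(m|\xi|)}\in L^1(\RR^d)$ (resp.\ $|P_{l_p}(\xi)|^{-1}e^{N_{k_p}(|\xi|)}\in L^1(\RR^d)$, the sequence $(k_p)$ being the one produced by Lemma \ref{nn1} applied to the bounded set $\{a\}$). Since $P_l(z)$ is a product of factors involving only $z^2$, every non-zero monomial of $P_l$ has a multi-index with every component even; hence $P_l(D_y) e^{i(x-y)\xi} = P_l(\xi) e^{i(x-y)\xi}$ and $P_l$ is formally self-adjoint for the $y$-integration. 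Substituting $e^{i(x-y)\xi} = P_l(\xi)^{-1}P_l(D_y) e^{i(x-y)\xi}$ and integrating by parts in $y$ yields
\beqs
I_{\chi,\delta}(x) = \frac{1}{(2\pi)^d}\int_{\RR^{2d}}\frac{e^{i(x-y)\xi}\chi(\delta\xi)}{P_l(\xi)}\,P_l(D_y)\bigl[a(x,y,\xi)u(y)\bigr]\,dy\,d\xi.
\eeqs
Expanding $P_l(D_y)$ via Leibniz and using the coefficient bound $|c_\alpha|\leq CL'^{|\alpha|}/M_\alpha$, the inclusion $B_p\subset M_p$, property $(M.2)$, and the $\SSS^*$-estimates on $u$ (with $L'$ chosen small in the Roumieu case, and with $h$ small together with the $u$-norm parameter large in the Beurling case), a computation parallel to the one in the proof of Theorem \ref{17} yields
\beqs
\bigl|P_l(D_y)[a(x,y,\xi)u(y)]\bigr|\leq C_0\,e^{M(m|\xi|)}\,e^{M(m|x|)}\,\Phi(y)
\eeqs
for some $\Phi\in L^1(\RR^d)$, together with the obvious $N_{k_p}$-analogue in the Roumieu case.

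Multiplying by $|P_l(\xi)|^{-1}\|\chi\|_\infty$ produces a $\delta$-independent, locally in $x$ uniform $L^1(\RR^{2d})$-dominant. Since $\chi(\delta\xi)\to 1$ pointwise as $\delta\to 0^+$, dominated convergence gives
\beqs
I(x) := \lim_{\delta\to 0^+}I_{\chi,\delta}(x) = \frac{1}{(2\pi)^d}\int_{\RR^{2d}}\frac{e^{i(x-y)\xi}}{P_l(\xi)}\,P_l(D_y)\bigl[a(x,y,\xi)u(y)\bigr]\,dy\,d\xi,
\eeqs
which is manifestly independent of $\chi$ (and, since the left-hand side does not involve $l$, also of $l$ \emph{a posteriori}). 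Continuity of $I$ on $\RR^d$ follows from a further application of dominated convergence letting $x$ vary, the integrand being continuous in $x$ with the dominant locally bounded in $x$. The surviving factor $e^{M(m|x|)}$ in the dominant delivers $|I(x)|\leq Ce^{M(m|x|)}$ in the Beurling case, and $|I(x)|\leq Ce^{N_{k_p}(|x|)}$ in the Roumieu case, which is exactly the asserted ultrapolynomial growth of class $*$.

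The main obstacle is the Leibniz estimate for $P_l(D_y)[a(x,y,\xi)u(y)]$: the free parameters $h$, $L'$, and the $\SSS^*$-norm of $u$ must be balanced so that the double sum over the binomial indices converges geometrically while the residual $y$-factor remains integrable. In the Roumieu case, one must additionally convert the ``for every $m > 0$'' form of the bounds on $a$ into a single effective $N_{k_p}$-bound, which is precisely what Lemma \ref{nn1} (built on Lemmas \ref{15} and \ref{pst}) provides.
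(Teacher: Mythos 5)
Your argument is correct, and it is a genuine (if modest) streamlining of the paper's proof. The paper regularizes the oscillatory integral with a \emph{double} integration by parts, writing $e^{i(x-y)\xi}=\frac{1}{P_l(y-x)}P_l(-D_\xi)\bigl(\frac{1}{P_l(\xi)}P_l(-D_y)e^{i(x-y)\xi}\bigr)$, so that its integrand carries the extra factors $\frac{1}{P_l(y-x)}$ and $P_l(D_\xi)\bigl(a\,\chi(\delta\xi)u\bigr)$; the $\xi$-derivatives falling on $\chi(\delta\xi)$ then produce powers $\delta^{|\alpha|-|\alpha'|}$ that have to be tracked through the Leibniz sum (bounded uniformly for $0<\delta\le 1$). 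You integrate by parts in $y$ only, and your key observation — that $\chi(\delta\xi)$ is independent of $y$, hence passes untouched through $P_l(D_y)$ and is controlled by $\|\chi\|_\infty$ — makes the dominant automatically $\delta$-uniform with no bookkeeping of $\delta$-powers. For the statement of this lemma (existence of the limit, independence of $\chi$, continuity, ultrapolynomial growth) nothing is lost: the paper's own final bound uses $1/P_l(y-x)$ only through the boundedness of its derivatives, never for decay in $x-y$, and the $y$-integrability comes entirely from $u$, exactly as in your version. What the heavier representation buys the paper is downstream: the formula $Au(x)=\frac{1}{(2\pi)^d}\int e^{i(x-y)\xi}\frac{1}{P_l(\xi)}P_l(D_y)\bigl(\frac{1}{P_l(y-x)}P_l(D_\xi)a(x,y,\xi)u(y)\bigr)dy\,d\xi$ with its built-in factor $e^{-M(r|x-y|)}$ is exactly what is needed in Theorem \ref{npr} and Proposition \ref{72} to convert decay in $y$ into decay in $x$ and conclude $Au\in\SSS^*$; your representation would have to be upgraded before those arguments. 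The remaining ingredients of your proof — Proposition \ref{orn} for $P_l$, the parameter balancing ($h$ small and $s$ large in the Beurling case, $L'$ small in the Roumieu case), Lemma \ref{nn1} to get a single $N_{k_p}$-bound in the Roumieu case, and dominated convergence for the limit, the $\chi$-independence and the continuity — all match the paper and are sound.
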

\begin{proof} The $(M_p)$ case. Let $a\in \Pi_{A_p,B_p,\rho}^{(M_p),\infty}\left(\RR^{3d};m\right)$. For $l>0$ consider $P_l(\xi)$. By proposition \ref{orn}, $P_l(\xi)$ is never zero and we can choose $P_l(\xi)$ such that, $\ds \left|P_l(\xi)\right|\geq c_1 e^{M(r|\xi|)}$, where $r>0$ is such that $\ds\int_{\RR^d}e^{M(m|\xi|)}e^{-M(r|\xi|)}d\xi<\infty$. Also, we have the estimate $\ds \left|D^{\alpha}_{\xi}\frac{1}{P_l(\xi)}\right|\leq c'_1\frac{\alpha!}{d_1^{|\alpha|}}$, for some $c'_1>0$ and $d_1>0$. On the other hand if we represent $P_l(\xi)=\sum_{\alpha}c_{\alpha}\xi^{\alpha}$ then there exist $L_0>0$ and $C_0>0$ such that $\left|c_{\alpha}\right|\leq C_0L_0^{|\alpha|}/M_{\alpha}$. Observe that
\beqs
e^{i(x-y)\xi}=\frac{1}{P_l(y-x)}P_l(-D_\xi)\left(\frac{1}{P_l(\xi)}P_l(-D_y)e^{i(x-y)\xi}\right).
\eeqs
Then we have
\beq\label{50}
I_{\chi,\delta}(x)=\frac{1}{(2\pi)^d}\int_{\RR^{2d}}e^{i(x-y)\xi}
\frac{1}{P_l(\xi)}P_l(D_y)\left(\frac{1}{P_l(y-x)}P_l(D_\xi)\left(a(x,y,\xi)\chi(\delta\xi)u(y)\right)\right)dyd\xi.
\eeq
Because $u,\chi\in\SSS^{(M_p)}\left(\RR^d\right)$, for every $s>0$
\beqs
\sup_{\alpha\in\NN^d}\frac{s^{|\alpha|}\left\|e^{M(s|\xi|)}D^{\alpha}\chi(\xi)\right\|_{L^{\infty}}}{M_{\alpha}}<\infty,
\quad \sup_{\alpha\in\NN^d}\frac{s^{|\alpha|}\left\|e^{M(s|y|)}D^{\alpha}u(y)\right\|_{L^{\infty}}}{M_{\alpha}}<\infty.
\eeqs
Now, we estimate as follows\\
$\ds \left|P_l(D_y)\left(\frac{1}{P_l(y-x)}P_l(D_\xi)\left(a(x,y,\xi)\chi(\delta\xi)u(y)\right)\right)\right|$
\beqs
&\leq&\sum_{\alpha,\gamma}\left|c_{\alpha}\right|\left|c_{\gamma}\right|\sum_{\substack{\alpha'\leq\alpha\\ \gamma'\leq\gamma}}\sum_{\gamma''\leq\gamma'}{\alpha\choose\alpha'}{\gamma\choose\gamma'}{\gamma'\choose\gamma''}\\
&{}&\hspace{20 pt}\cdot\left|D^{\alpha'}_{\xi}D^{\gamma''}_ya(x,y,\xi)\right|\left| D^{\gamma'-\gamma''}_y\frac{1}{P_l(y-x)}\right|\delta^{|\alpha|-|\alpha'|}
\left|D^{\alpha-\alpha'}_{\xi}\chi(\delta\xi)\right|\left|D^{\gamma-\gamma'}_yu(y)\right|\\
&\leq&C'\sum_{\alpha,\gamma}\left|c_{\alpha}\right|\left|c_{\gamma}\right|\sum_{\substack{\alpha'\leq\alpha\\ \gamma'\leq\gamma}}\sum_{\gamma''\leq\gamma'}{\alpha\choose\alpha'}{\gamma\choose\gamma'}{\gamma'\choose\gamma''}
\frac{(\gamma'-\gamma'')!}{d_1^{|\gamma'|-|\gamma''|}}\\
&{}&\cdot\frac{h^{|\alpha'|+|\gamma''|}\langle x-y\rangle^{\rho|\alpha'|+\rho|\gamma''|}A_{\alpha'}B_{\gamma''}
e^{M(m|\xi|)}e^{M(m|x|)}e^{M(m|y|)}}{\langle(x,y,\xi)\rangle^{\rho|\alpha'|+\rho|\gamma''|}}\delta^{|\alpha|-|\alpha'|}
\frac{M_{\alpha-\alpha'}M_{\gamma-\gamma'}e^{-M(s\delta|\xi|)}}
{s^{|\alpha|-|\alpha'|+|\gamma|-|\gamma'|}e^{M(s|y|)}}\\
&\leq&C''\sum_{\alpha,\gamma}\frac{L_0^{|\alpha|+|\gamma|}}{M_{\alpha}M_{\gamma}}\sum_{\substack{\alpha'\leq\alpha\\ \gamma'\leq\gamma}}\sum_{\gamma''\leq\gamma'}{\alpha\choose\alpha'}{\gamma\choose\gamma'}{\gamma'\choose\gamma''}
\frac{(\gamma'-\gamma'')!(4L_0)^{|\gamma'|-|\gamma''|}}{d_1^{|\gamma'|-|\gamma''|}(4L_0)^{|\gamma'|-|\gamma''|}}\\
&{}&\cdot\frac{(2Lh)^{|\alpha'|+|\gamma'|}
e^{M(m|\xi|)}e^{M(m|x|)}e^{M(m|y|)}}{(2Lh)^{|\gamma'|-|\gamma''|}M_{\gamma'-\gamma''}}\delta^{|\alpha|-|\alpha'|}
\frac{M_{\alpha}M_{\gamma}}
{s^{|\alpha|-|\alpha'|+|\gamma|-|\gamma'|}e^{M(s|y|)}}\\
&\leq&C_1\frac{e^{M(m|\xi|)}e^{M(m|x|)}e^{M(m|y|)}}{e^{M(s|y|)}}\\
&{}&\cdot\sum_{\alpha,\gamma}
\left(\frac{\delta L_0}{s}\right)^{|\alpha|}\left(\frac{L_0}{s}\right)^{|\gamma|}\sum_{\substack{\alpha'\leq\alpha\\ \gamma'\leq\gamma}}{\alpha\choose\alpha'}{\gamma\choose\gamma'}
\left(\frac{2sLh}{\delta}\right)^{|\alpha'|}(2sLh)^{|\gamma'|}\sum_{\gamma''\leq\gamma'}{\gamma'\choose\gamma''}
\frac{1}{(8L_0Lh)^{|\gamma'|-|\gamma''|}}\\
&=&C_1\frac{e^{M(m|\xi|)}e^{M(m|x|)}e^{M(m|y|)}}{e^{M(s|y|)}}
\sum_{\alpha,\gamma}\left(\frac{\delta L_0}{s}+2LL_0h\right)^{|\alpha|}\left(\frac{L_0}{s}+ 2L_0Lh+\frac{1}{4}\right)^{|\gamma|}.
\eeqs
Choose $h$ such that $LL_0h<1/8$ and then choose $s$ such that the above sum converge for $\delta=1$ and denote its value by $C_2$ (then, obviously, for $0<\delta<1$ the sum is not greater than $C_2$). Moreover, choose $s$ large enough, such that $\ds \int_{\RR^d}e^{M(m|y|)}e^{-M(s|y|)}dy<\infty$. Hence
\beqs
\left|I_{\chi,\delta}(x)\right|&\leq&\frac{C_1C_2}{(2\pi)^d}\int_{\RR^{2d}}
\frac{1}{\left|P_l(\xi)\right|}\frac{e^{M(m|\xi|)}e^{M(m|x|)}e^{M(m|y|)}}{e^{M(s|y|)}}dyd\xi\\
&\leq&Ce^{M(m|x|)}\int_{\RR^d}
\frac{e^{M(m|\xi|)}}{e^{M(r|\xi|)}}d\xi\cdot \int_{\RR^d}\frac{e^{M(m|y|)}}{e^{M(s|y|)}}dy,
\eeqs
which is finite for every $x$. Note that $a(x,y,\xi)\chi(\delta\xi)u(y)\longrightarrow a(x,y,\xi)u(y)$ in $\EE^{(M_p)}\left(\RR^{2d}_{y,\xi}\right)$ for each fixed $x$ when $\delta\longrightarrow 0^+$, so $\ds\frac{1}{P_l(\xi)}P_l(D_y)\left(\frac{1}{P_l(y-x)}P_l(D_\xi)\left(a(x,y,\xi)\chi(\delta\xi)u(y)\right)\right)$ tends to $\ds\frac{1}{P_l(\xi)}P_l(D_y)\left(\frac{1}{P_l(y-x)}P_l(D_\xi)a(x,y,\xi)u(y)\right)$ in $\EE^{(M_p)}\left(\RR^{2d}_{y,\xi}\right)$ for each fixed $x$, when $\delta\longrightarrow 0^+$. If we take the limit in (\ref{50}) as $\delta\longrightarrow 0^+$, from dominated convergence, it follows that
\beqs
\lim_{\delta\rightarrow 0^+}I_{\chi,\delta}(x)=\frac{1}{(2\pi)^d}\int_{\RR^{2d}}e^{i(x-y)\xi}
\frac{1}{P_l(\xi)}P_l(D_y)\left(\frac{1}{P_l(y-x)}P_l(D_\xi)\left(a(x,y,\xi)u(y)\right)\right)dyd\xi.
\eeqs
Moreover, by similar estimates as above, one proves that the function in the last integral can be dominated by $\ds Ce^{M(m|x|)}\frac{e^{M(m|\xi|)}}{e^{M(r|\xi|)}}\cdot \frac{e^{M(m|y|)}}{e^{M(s|y|)}}$. Thus, $\ds\lim_{\delta\rightarrow 0^+}I_{\chi,\delta}(x)$ is a continuous function with $(M_p)$ - ultrapolynomial growth. Note that the choice of $P_l$ does not depend on $\chi$ and $u$, only on $m$ such that $a\in\Pi_{A_p,B_p,\rho}^{(M_p),\infty}\left(\RR^{3d};m\right)$. Hence, one can choose the same $P_l$ for all $a\in\Pi_{A_p,B_p,\rho}^{(M_p),\infty}\left(\RR^{3d};m\right)$. From this, the claim in the lemma follows.\\
\indent The $\{M_p\}$ case. Let $a\in \Pi_{A_p,B_p}^{\{M_p\},\infty}\left(\RR^{3d};h\right)$. By lemma \ref{nn1} there exists $(k_p)\in\mathfrak{R}$ such that
\beq\label{zon}
\left|D^{\alpha}_{\xi}D^{\beta}_xD^{\gamma}_y a(x,y,\xi)\right|\leq C_0\frac{h^{|\alpha|+|\beta|+|\gamma|}
\langle x-y\rangle^{\rho|\alpha|+\rho|\beta|+\rho|\gamma|}A_{\alpha}B_{\beta+\gamma}
e^{N_{k_p}(|\xi|)}e^{N_{k_p}(|x|)}e^{N_{k_p}(|y|)}}{\langle (x,y,\xi)\rangle^{\rho|\alpha|+\rho|\beta|+\rho|\gamma|}},
\eeq
for all $\alpha,\beta,\gamma\in\NN^d$ and $(x,y,\xi)\in\RR^{3d}$. For $(l_p)\in\mathfrak{R}$ consider $P_{l_p}(\xi)$. By proposition \ref{orn}, we can choose $P_{l_p}(\xi)$ such that, $\ds \left|P_{l_p}(\xi)\right|\geq c'' e^{N_{r_p}(|\xi|)}$, where $(r_p)\in\mathfrak{R}$ is such that $\ds \int_{\RR^d}e^{N_{k_p}(|\xi|)}e^{-N_{r_p}(|\xi|)}d\xi<\infty$. On the other hand, if we represent $\ds P_{l_p}(\xi)=\sum_{\alpha}c_{\alpha}\xi^{\alpha}$, then for every $L'>0$ there exists $C'>0$ such that $\left|c_{\alpha}\right|\leq C'L'^{|\alpha|}/M_{\alpha}$. Also, we have the same estimates, as in the $(M_p)$ case, for the derivatives of $1/P_{l_p}(\xi)$, i.e $\ds \left|D^{\alpha}_{\xi}\frac{1}{P_{l_p}(\xi)}\right|\leq c'_1\frac{\alpha!}{d_1^{|\alpha|}}$, for some $c'_1>0$ and $d_1>0$. Because $u,\chi\in\SSS^{\{M_p\}}\left(\RR^d\right)$, there exists $s>0$, such that
\beqs
\sup_{\alpha\in\NN^d}\frac{s^{|\alpha|}\left\|e^{M(s|\xi|)}D^{\alpha}\chi(\xi)\right\|_{L^{\infty}}}{M_{\alpha}}
<\infty,\quad \sup_{\alpha\in\NN^d}\frac{s^{|\alpha|}\left\|e^{M(s|y|)}D^{\alpha}u(y)\right\|_{L^{\infty}}}{M_{\alpha}}<\infty.
\eeqs
(We can choose $s$ to be the same for $u$ and $\chi$). Similarly as for the $(M_p)$ case, one obtains (\ref{50}), but with $P_{l_p}$ in place of $P_l$ and obtains the estimate\\
$\ds \left|P_{l_p}(D_y)\left(\frac{1}{P_{l_p}(y-x)}P_{l_p}(D_\xi)\left(a(x,y,\xi)\chi(\delta\xi)u(y)\right)\right)\right|$
\beqs
\leq C_1\frac{e^{N_{k_p}(|\xi|)}e^{N_{k_p}(|x|)}e^{N_{k_p}(|y|)}}{e^{M(s|y|)}}
\sum_{\alpha,\gamma}\left(\frac{\delta L'}{s}+2LL'h\right)^{|\alpha|}\left(\frac{L'}{s}+ 2L'Lh+\frac{1}{4}\right)^{|\gamma|}.
\eeqs
Choose $L'$, small enough, such that the above sum converges for $\delta=1$ and denote its value by $C_2$. Similarly as above, we obtain the estimate
\beqs
\left|I_{\chi,\delta}(x)\right|&\leq&Ce^{N_{k_p}(|x|)}\int_{\RR^d}
e^{-N_{r_p}(|\xi|)}e^{N_{k_p}(|\xi|)}d\xi\cdot \int_{\RR^d}e^{N_{k_p}(|y|)}e^{-M(s|y|)}dy.
\eeqs
The first integral converges by the choice of $(r_p)$ and the convergence of the second can be easily proven. By similar arguments as in the $(M_p)$ case and dominated convergence, the claim of the lemma follows. Note that the choice of $P_{l_p}$ does not depend on $u$ and $\chi$, only on $a$.
\end{proof}

By the lemma, $\ds\lim_{\delta\rightarrow 0^+}I_{\chi,\delta}(x)$ is an element of $\SSS'^*\left(\RR^d\right)$. For every $a\in\Pi_{A_p,B_p,\rho}^{*,\infty}\left(\RR^{3d}\right)$ define the operator $A:\SSS^*\left(\RR^d\right)\longrightarrow\SSS'^*\left(\RR^d\right)$, $Au(x)=\ds\lim_{\delta\rightarrow 0^+}I_{\chi,\delta}(x)$. By the proof of the above lemma we obtain that
\beqs
Au(x)=\frac{1}{(2\pi)^d}\int_{\RR^{2d}}e^{i(x-y)\xi}
\frac{1}{P_l(\xi)}P_l(D_y)\left(\frac{1}{P_l(y-x)}P_l(D_\xi)a(x,y,\xi)u(y)\right)dyd\xi,
\eeqs
for the $(M_p)$ case, respectively
\beqs
Au(x)=\frac{1}{(2\pi)^d}\int_{\RR^{2d}}e^{i(x-y)\xi}
\frac{1}{P_{l_p}(\xi)}P_{l_p}(D_y)\left(\frac{1}{P_{l_p}(y-x)}P_{l_p}(D_\xi)a(x,y,\xi)u(y)\right)dyd\xi,
\eeqs
for the $\{M_p\}$ case and moreover, the choice of $P_l$ in the $(M_p)$ case, respectively $P_{l_p}$ in the $\{M_p\}$ case does not depend on $u\in\SSS^*$. If $P_{l'}$, resp. $P_{l'_p}$, is another operator such that $\left|P_{l'}(\xi)\right|\geq c_1 e^{M(r|\xi|)}$, resp. $\left|P_{l'_p}(\xi)\right|\geq c'' e^{N_{r_p}(|\xi|)}$, where $\ds\int_{\RR^d}e^{M(m|\xi|)}e^{-M(r|\xi|)}d\xi<\infty$, resp. $\ds \int_{\RR^d}e^{N_{k_p}(|\xi|)}e^{-N_{r_p}(|\xi|)}d\xi<\infty$, then $Au(x)$ can be given in the above form with $P_{l'}$ in place of $P_l$, resp. $P_{l'_p}$ in place of $P_{l_p}$. To prove the continuity of $A$, put
\beqs
K(x,y,\xi)=e^{i(x-y)\xi}
\frac{1}{P_l(\xi)}P_l(D_y)\left(\frac{1}{P_l(y-x)}P_l(D_\xi)a(x,y,\xi)u(y)\right)
\eeqs
in the $(M_p)$ case, resp., the same but with $P_{l_p}$ in place of $P_l$, in the $\{M_p\}$ case. For $v\in\SSS^*$,
\beqs
\langle Au(x),v(x)\rangle=\frac{1}{(2\pi)^d}\int_{\RR^{3d}}K(x,y,\xi)v(x)dyd\xi dx.
\eeqs
Let $v\in \SSS^*$ be fixed. If $u\in B$, where $B$ is a bounded set in $\SSS^*$, similarly as in the proof of the above lemma, one can prove that $\langle Au(x),v(x)\rangle$ is bounded when $u\in B$. Hence the set $A(B)$ is simply bounded in $\SSS'^*$, consequently it is strongly bounded. Because $\SSS^*$ is bornological and $A$ maps bounded sets into bounded sets it must be continuous.\\

\begin{theorem}\label{npr}
The mapping $(a,u)\mapsto Au$, $\Pi_{A_p,B_p,\rho}^{*,\infty}\left(\RR^{3d}\right)\times
\SSS^*\left(\RR^d\right)\longrightarrow\SSS^*\left(\RR^d\right)$, is hypocontinuous.
\end{theorem}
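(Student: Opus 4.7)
The plan is to upgrade the preceding construction, which shows only that $A:\SSS^*\to\SSS'^*$ is continuous, to the stronger statement that $Au\in\SSS^*\left(\RR^d\right)$ with quantitative seminorm bounds, and then to conclude by a Banach--Steinhaus argument. Since both $\Pi_{A_p,B_p,\rho}^{*,\infty}\left(\RR^{3d}\right)$ and $\SSS^*\left(\RR^d\right)$ are barreled (they are in fact bornological, as noted after Lemma \ref{nn1}), separate continuity of the bilinear map $(a,u)\mapsto Au$ into $\SSS^*$ will automatically yield hypocontinuity with respect to bounded subsets of either variable.

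The concrete task is therefore to establish a pointwise bound on $D^\beta_x Au(x)$ of the correct $\SSS^*$-type, with constants controlled by seminorms of $a$ and of $u$. Starting from the representation
\beqs
Au(x)=\frac{1}{(2\pi)^d}\int_{\RR^{2d}}e^{i(x-y)\xi}\frac{1}{P(\xi)}P(D_y)\left(\frac{1}{P(y-x)}P(D_\xi)\bigl(a(x,y,\xi)u(y)\bigr)\right)dy\,d\xi
\eeqs
furnished by Lemma \ref{45} (with $P=P_l$ in the Beurling case, $P=P_{l_p}$ in the Roumieu case), I would differentiate $D^\beta_x$ under the integral sign. Each $x$-derivative either produces a factor $\xi^{\beta_i}$ from the exponential, or falls on $a$, or on $1/P(y-x)$; the $\xi^{\beta_i}$ factors are transferred to $D^{\beta_i}_y$-derivatives of $a(x,y,\xi)u(y)$ by integration by parts. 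Using the symbol-class estimates on $a$ (which allow mixing of $x$- and $y$-derivatives through $B_{\beta+\gamma}$), the $\SSS^*$-seminorms of $u$, and the inclusions $A_p,B_p\subset M_p$, the combinatorics proceed just as in the proof of Lemma \ref{45} and yield a pointwise bound of the form $|D^\beta_x Au(x)|\le C\,h^{|\beta|}M_\beta\,g(x)$ with $g(x)$ only controlled by $e^{M(m|x|)}$ (resp.\ $e^{N_{k_p}(|x|)}$).

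The main obstacle is therefore to replace this growth in $x$ by an arbitrarily fast ultraexponential decay $e^{-M(m'|x|)}$ (resp.\ $e^{-N_{s_p}(|x|)}$ for any $(s_p)\in\mathfrak{R}$), as is required for $\SSS^*$ membership. The resolution is to split the $y$-integration into $\{|y|\le|x|/2\}$ and $\{|y|>|x|/2\}$. On the second region, $|y|\ge|x|/2$, so the ultraexponential decay of $u$ in $y$ translates directly to the required decay in $x$. On the first region, $|x-y|\ge|x|/2$, and additional decay is manufactured by inserting another ultrapolynomial factor $Q$ in $x-y$: from the identity $Q(D_\xi)e^{i(x-y)\xi}=Q(x-y)e^{i(x-y)\xi}$ and integration by parts in $\xi$, one produces the factor $1/|Q(x-y)|\le Ce^{-M(r|x-y|)}\le Ce^{-M(r|x|/2)}$ (resp.\ $Ce^{-N_{r_p}(|x|/2)}$) with $r$ (resp.\ $(r_p)$) as large as desired thanks to Proposition \ref{orn}, while the resulting transposed ultradifferential operator in $\xi$ acting on the integrand is absorbed into the $A_\alpha$-factor of the symbol-class norm.

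Combining these estimates yields a bound of the form $\|Au\|_{\SSS^*}\le C\,\|a\|_{\Pi^{*,\infty}}\cdot\|u\|_{\SSS^*}$ for suitable seminorms on each side. This shows simultaneously that $Au\in\SSS^*$, that for fixed $a$ the assignment $u\mapsto Au$ is continuous $\SSS^*\to\SSS^*$, and that for fixed $u$ the assignment $a\mapsto Au$ is continuous $\Pi^{*,\infty}\to\SSS^*$. In other words, $(a,u)\mapsto Au$ is separately continuous into $\SSS^*$; barreledness of both factors then promotes separate continuity to hypocontinuity with respect to bounded subsets of either variable, which is the statement of the theorem.
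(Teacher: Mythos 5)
Your proposal is correct in outline and shares the paper's skeleton: reduce hypocontinuity to separate continuity via barreledness of both factors, then estimate $D^{\beta}_x Au$ starting from the $P_l$-regularized representation furnished by Lemma \ref{45}. Where you diverge is in how the decay in $x$ is extracted, and here your route is more complicated than necessary. The paper does not split the $y$-integral: the factor $1/P_l(y-x)$ is already present in the representation, and (as noted in the remarks after Lemma \ref{45}) one is free to re-choose the parameter $l$ for each target seminorm $s$ so that $\left|D^{\alpha}\frac{1}{P_l}\right|\leq c'_1\frac{\alpha!}{d_1^{|\alpha|}}e^{-M(r|\cdot|)}$ with $r$ so large that $e^{M(r|x|/2)}\geq \tilde{C}e^{M(s|x|)}e^{M(m|x|)}$; the single inequality $e^{M(r|x|/2)}\leq C_6 e^{M(r|x-y|)}e^{M(r|y|)}$ then handles both of your regions simultaneously, the spurious growth $e^{M(r|y|)}$ being absorbed by the decay of $u$ after enlarging $t$. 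Your additional insertion of an ultrapolynomial $Q(x-y)$ by integration by parts on the region $|x-y|\geq|x|/2$ is therefore redundant -- it re-derives a factor the representation already contains -- though it is not wrong. One point you should tighten: in the Roumieu case a bilinear estimate $\|Au\|\leq C\|a\|\,\|u\|$ between continuous seminorms does not literally exist, since $\Pi_{A_p,B_p,\rho}^{\{M_p\},\infty}$ and $\SSS^{\{M_p\}}$ are inductive limits; the paper instead shows that products of bounded sets are mapped to bounded sets (which suffices, the spaces being bornological), using Lemma \ref{nn1} to obtain a single weight $e^{N_{k_p}}$ uniform over the bounded set of symbols and the projective description of $\SSS^{\{M_p\}}$ by the norms $\|\cdot\|_{(s_p),(s'_p)}$ on the target side. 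Your mention of the $N_{k_p}$-weights indicates you see this, but the concluding step of your argument should be phrased in those terms rather than as a seminorm inequality.
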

\begin{proof} Because $\Pi_{A_p,B_p,\rho}^{*,\infty}\left(\RR^{3d}\right)$ and $\SSS^*\left(\RR^d\right)$ are barreled it is enough to prove that the mapping is separately continuous. We will consider first the $(M_p)$ case. It is enough to prove that, for every $m>0$, the mapping $\Pi_{A_p,B_p,\rho}^{(M_p),\infty}\left(\RR^{3d};m\right)\times
\SSS^{(M_p)}\left(\RR^d\right)\longrightarrow\SSS^{(M_p)}\left(\RR^d\right)$ is separately continuous. We will prove that it is continuous i.e. that for every $s>0$, there exists a constant $C>0$ and $h>0$, $t>0$ such that $\|Au\|_s\leq C \|a\|_{h,m,\Pi}\|u\|_t$, where $\ds\|\phi\|_s=\sup_{\alpha}\frac{s^{|\alpha|}\left\|D^{\alpha}\phi(\cdot)e^{M(s|\cdot|)}\right\|_{L^{\infty}}}
{M_{\alpha}}$ are the seminorms in $\SSS^{(M_p)}\left(\RR^d\right)$. Let $s>0$. Obviously, without losing generality, we can assume that $s\geq 1$. Choose $P_l(\xi)$ as in the proof of the above lemma and represent $Au$ in the form
\beqs
Au(x)=\frac{1}{(2\pi)^d}\int_{\RR^{2d}}e^{i(x-y)\xi}
\frac{1}{P_l(\xi)}P_l(D_y)\left(\frac{1}{P_l(y-x)}P_l(D_\xi)a(x,y,\xi)u(y)\right)dyd\xi.
\eeqs
In the proof of the above lemma we proved that $P_l$ can be chosen the same for all $a\in\Pi_{A_p,B_p,\rho}^{(M_p),\infty}\left(\RR^{3d};m\right)$ (it depends only on $m$). By proposition \ref{orn} $P_l(\xi)$ is never zero and we can choose $l$ small enough such that $\ds \left|D^{\alpha}_{\xi}\frac{1}{P_l(\xi)}\right|\leq c'_1\frac{\alpha!}{d_1^{|\alpha|}}e^{-M(r|\xi|)}$, for some $c'_1>0$ and $d_1>0$, where $r>0$ is such that $\ds \int_{\RR^d}\frac{e^{M(m|\xi|)}e^{M(2s|\xi|)}}{e^{M(r|\xi|)}}d\xi$ converges and $e^{M(\frac{r}{2}|x|)}\geq \tilde{C} e^{M(s|x|)}e^{M(m|x|)}$. On the other hand, if we represent $\ds P_l(\xi)=\sum_{\alpha}c_{\alpha}\xi^{\alpha}$, there exist $L_0\geq 1$ and $C_0>0$ such that $\left|c_{\alpha}\right|\leq C_0L_0^{|\alpha|}/M_{\alpha}$. Then, for $a\in\Pi_{A_p,B_p,\rho}^{(M_p),\infty}\left(\RR^{3d};m\right)$ and $u\in\SSS^{(M_p)}$, we have\\
$\ds\left|D^{\beta}_x\left(e^{i(x-y)\xi}\frac{1}{P_l(\xi)}P_l(D_y)
\left(\frac{1}{P_l(y-x)}P_l(D_\xi)a(x,y,\xi)u(y)\right)\right)\right|$
\beqs
&\leq&\sum_{\beta'\leq\beta}\sum_{\alpha,\gamma}\sum_{\gamma'\leq\gamma}\sum_{\gamma''\leq\gamma'}
\sum_{\beta''\leq\beta'}{\beta\choose\beta'}{\beta'\choose\beta''}{\gamma\choose\gamma'}{\gamma'\choose\gamma''}
\left|c_{\alpha}\right|\left|c_{\gamma}\right|\frac{|\xi|^{|\beta|-|\beta'|}}{\left|P_l(\xi)\right|}\\
&{}&\hspace{30 pt}\cdot\left|D^{\gamma'-\gamma''}_yD^{\beta'-\beta''}_x\left(\frac{1}{P_l(y-x)}\right)\right|
\left|D^{\alpha}_{\xi}D^{\beta''}_xD^{\gamma''}_y a(x,y,\xi)\right|\left|D^{\gamma-\gamma'}_y u(y)\right|\\
&\leq&C_1\|a\|_{h,m,\Pi}\|u\|_t
\sum_{\beta'\leq\beta}\sum_{\alpha,\gamma}\sum_{\gamma'\leq\gamma}\sum_{\gamma''\leq\gamma'}
\sum_{\beta''\leq\beta'}{\beta\choose\beta'}{\beta'\choose\beta''}{\gamma\choose\gamma'}{\gamma'\choose\gamma''}
\frac{L_0^{|\alpha|+|\gamma|}}{M_{\alpha}M_{\gamma}}\frac{|\xi|^{|\beta|-|\beta'|}}{e^{M(r|\xi|)}}\\
&{}&\hspace{30 pt}\cdot\frac{(\beta'-\beta''+\gamma'-\gamma'')!e^{-M(r|x-y|)}}
{d_1^{|\beta'|-|\beta''|+|\gamma'|-|\gamma''|}}\cdot\frac{M_{\gamma-\gamma'}e^{-M(t|y|)}}
{t^{|\gamma|-|\gamma'|}}\\
&{}&\hspace{30 pt}\cdot\frac{h^{|\alpha|+|\beta''|+|\gamma''|}\langle x-y\rangle^{\rho|\alpha|+\rho|\beta''|+\rho|\gamma''|}A_{\alpha}B_{\beta''+\gamma''}
e^{M(m|\xi|)}e^{M(m|x|)}e^{M(m|y|)}}{\langle (x,y,\xi)\rangle^{\rho|\alpha|+\rho|\beta''|+\rho|\gamma''|}}\\
&\leq&C_2\|a\|_{h,m,\Pi}\|u\|_t
\sum_{\beta'\leq\beta}\sum_{\alpha,\gamma}\sum_{\gamma'\leq\gamma}\sum_{\gamma''\leq\gamma'}
\sum_{\beta''\leq\beta'}{\beta\choose\beta'}{\beta'\choose\beta''}{\gamma\choose\gamma'}{\gamma'\choose\gamma''}
L_0^{|\alpha|+|\gamma|}\frac{|\xi|^{|\beta|-|\beta'|}}{e^{M(r|\xi|)}}\\
&{}&\hspace{30 pt}\cdot\frac{(\beta'-\beta''+\gamma'-\gamma'')!e^{-M(r|x-y|)}}
{d_1^{|\beta'|-|\beta''|+|\gamma'|-|\gamma''|}}\cdot\frac{e^{-M(t|y|)}}
{t^{|\gamma|-|\gamma'|}M_{\gamma'-\gamma''}}\\
&{}&\hspace{30 pt}\cdot(2hL)^{|\alpha|+|\beta''|+|\gamma''|}H^{|\beta''|+|\gamma''|}M_{\beta''}
e^{M(m|\xi|)}e^{M(m|x|)}e^{M(m|y|)}\\
&\leq&C_2\|a\|_{h,m,\Pi}\|u\|_t\frac{M_{\beta}e^{M(m|\xi|)}e^{M(m|x|)}e^{M(m|y|)}}
{e^{M(r|\xi|)}e^{M(t|y|)}e^{M(r|x-y|)}}\\
&{}&\cdot\sum_{\beta'\leq\beta}\sum_{\alpha,\gamma}\sum_{\gamma'\leq\gamma}\sum_{\gamma''\leq\gamma'}
\sum_{\beta''\leq\beta'}{\beta\choose\beta'}{\beta'\choose\beta''}{\gamma\choose\gamma'}{\gamma'\choose\gamma''}
L_0^{|\alpha|+|\gamma|}\frac{|\xi|^{|\beta|-|\beta'|}}{M_{\beta-\beta'}}\cdot
\frac{(2s)^{|\beta|-|\beta'|}}{(2s)^{|\beta|-|\beta'|}}\\
&{}&\hspace{30 pt}\cdot\frac{(\beta'-\beta''+\gamma'-\gamma'')!}
{d_1^{|\beta'|-|\beta''|+|\gamma'|-|\gamma''|}}\cdot\frac{(2hL)^{|\alpha|+|\beta''|+|\gamma''|}H^{|\beta''|+|\gamma''|}}
{t^{|\gamma|-|\gamma'|}M_{\beta'-\beta''}M_{\gamma'-\gamma''}}\\
&\leq&C_3\|a\|_{h,m,\Pi}\|u\|_t\frac{M_{\beta}e^{M(m|\xi|)}e^{M(2s|\xi|)}e^{M(m|x|)}e^{M(m|y|)}}
{e^{M(r|\xi|)}e^{M(t|y|)}e^{M(r|x-y|)}}\\
&{}&\cdot\sum_{\beta'\leq\beta}\sum_{\alpha,\gamma}\sum_{\gamma'\leq\gamma}\sum_{\gamma''\leq\gamma'}
\sum_{\beta''\leq\beta'}{\beta\choose\beta'}{\beta'\choose\beta''}{\gamma\choose\gamma'}{\gamma'\choose\gamma''}
(2hLL_0)^{|\alpha|}L_0^{|\gamma|}\\
&{}&\hspace{30 pt}\cdot\frac{(\beta'-\beta''+\gamma'-\gamma'')!(4sHL_0)^{|\beta'|-|\beta''|+|\gamma'|-|\gamma''|}}
{d_1^{|\beta'|-|\beta''|+|\gamma'|-|\gamma''|}M_{\beta'-\beta''+\gamma'-\gamma''}
(4sHL_0)^{|\beta'|-|\beta''|+|\gamma'|-|\gamma''|}}
\cdot\frac{(2hL)^{|\beta''|+|\gamma''|}H^{|\beta'|+|\gamma'|}}{t^{|\gamma|-|\gamma'|}(2s)^{|\beta|-|\beta'|}}\\
&\leq&C_4\|a\|_{h,m,\Pi}\|u\|_t\frac{M_{\beta}e^{M(m|\xi|)}e^{M(2s|\xi|)}e^{M(m|x|)}e^{M(m|y|)}}
{e^{M(r|\xi|)}e^{M(t|y|)}e^{M(r|x-y|)}}\\
&{}&\cdot\sum_{\beta'\leq\beta}\sum_{\alpha,\gamma}\sum_{\gamma'\leq\gamma}\sum_{\gamma''\leq\gamma'}
\sum_{\beta''\leq\beta'}{\beta\choose\beta'}{\beta'\choose\beta''}{\gamma\choose\gamma'}{\gamma'\choose\gamma''}
(2hLL_0)^{|\alpha|}L_0^{|\gamma|}\\
&{}&\hspace{30 pt}\cdot\frac{(2hL)^{|\beta''|+|\gamma''|}H^{|\beta'|+|\gamma'|}}
{(4sHL_0)^{|\beta'|-|\beta''|+|\gamma'|-|\gamma''|}t^{|\gamma|-|\gamma'|}(2s)^{|\beta|-|\beta'|}}\\
&=&C_4\|a\|_{h,m,\Pi}\|u\|_t\frac{M_{\beta}e^{M(m|\xi|)}e^{M(2s|\xi|)}e^{M(m|x|)}e^{M(m|y|)}}
{e^{M(r|\xi|)}e^{M(t|y|)}e^{M(r|x-y|)}}\\
&{}&\cdot\left(\frac{1}{2s}+2hLH+\frac{1}{4sL_0}\right)^{|\beta|}
\sum_{\alpha,\gamma}(2hLL_0)^{|\alpha|}\left(\frac{L_0}{t}+2hLL_0H+\frac{1}{4s}\right)^{|\gamma|}.
\eeqs
Note that $e^{M(\frac{r}{2}|x|)}\leq C_6 e^{M(r|x-y|)}e^{M(r|y|)}$. For the chosen $r$ we choose $t$ such that the integral $\ds\int_{\RR^d}\frac{e^{M(m|y|)}e^{M(r|y|)}}{e^{M(t|y|)}}dy$ converges and moreover, we take $h$ small enough and $t$ large enough such that the above sum converges. Moreover, choose $h$ small enough such that $\ds\frac{1}{2s}+2hLH+\frac{1}{4sL_0}\leq \frac{1}{s}$. Then for the derivatives of $Au$ we obtain
\beqs
\left|D^{\beta}_x Au(x)\right|\leq C\|a\|_{h,m,\Pi}\|u\|_t e^{-M(s|x|)}\frac{M_{\beta}}{s^{|\beta|}},
\eeqs
which is the desired estimate.\\
\indent Now we will consider the $\{M_p\}$ case. Note that it is enough to prove that, for every $h>0$, $\Pi_{A_p,B_p,\rho}^{\{M_p\},\infty}\left(\RR^{3d};h\right)\times
\SSS^{\{M_p\}}\left(\RR^d\right)\longrightarrow\SSS^{\{M_p\}}\left(\RR^d\right)$ is separately continuous. Because $\Pi_{A_p,B_p,\rho}^{\{M_p\},\infty}\left(\RR^{3d};h\right)$ and $\SSS^{\{M_p\}}\left(\RR^d\right)$ are bornological it is enough to prove that this mapping maps products of bounded sets into bounded sets in $\SSS^{\{M_p\}}\left(\RR^d\right)$. Let $B_1$ and $B_2$ be bounded sets in $\Pi_{A_p,B_p,\rho}^{\{M_p\},\infty}\left(\RR^{3d};h\right)$, respectively in $\SSS^{\{M_p\}}\left(\RR^d\right)$. Then, by lemma \ref{nn1}, there exist $\tilde{C}_1>0$ and $(k_p)\in\mathfrak{R}$ such that
\beq\label{tt1}
\frac{\left|D^{\alpha}_{\xi}D^{\beta}_xD^{\gamma}_y a(x,y,\xi)\right|\langle (x,y,\xi)\rangle^{\rho|\alpha|+\rho|\beta|+\rho|\gamma|}
e^{-N_{k_p}(|\xi|)}e^{-N_{k_p}(|x|)}e^{-N_{k_p}(|y|)}}{h^{|\alpha|+|\beta|+|\gamma|}\langle x-y\rangle^{\rho|\alpha|+\rho|\beta|+\rho|\gamma|}A_{\alpha}B_{\beta+\gamma}}\leq \tilde{C}_1,
\eeq
for all $a\in B_1$, $(x,y,\xi)\in\RR^{3d}$ and $\alpha,\beta,\gamma\in\NN^d$. We know that $\ds \SSS^{\{M_p\}}\left(\RR^d\right)=\lim_{\substack{\longrightarrow\\s\rightarrow 0}}
\SSS^{M_p,s}_{\infty}\left(\RR^d\right)$, where $\SSS^{M_p,s}_{\infty}\left(\RR^d\right)$ is the $(B)$ - space with the norm $\ds\|\phi\|_s=\sup_{\alpha}\frac{s^{|\alpha|}\left\|D^{\alpha}\phi(\cdot)e^{M(s|\cdot|)}\right\|_{L^{\infty}}}
{M_{\alpha}}$ and $\SSS^{\{M_p\}}\left(\RR^d\right)$ is a $(DFS)$ - space generated by this inductive limit (the linking mappings are compact inclusions). So, there exists $t>0$ such that $B_2\subseteq \SSS^{M_p,t}_{\infty}\left(\RR^d\right)$ and it is bounded there. Hence, there exists $\tilde{C}_2>0$ such that $\ds\|u\|_t\leq \tilde{C}_2$, for all $u\in B_2$. On the other hand, we know that $\ds \SSS^{\{M_p\}}\left(\RR^d\right)=\lim_{\substack{\longleftarrow\\(s_p),(s'_p)\in\mathfrak{R}}}
\SSS^{M_p}_{(s_p),(s'_p)}\left(\RR^d\right)$, where $\SSS^{M_p}_{(s_p),(s'_p)}\left(\RR^d\right)$ is the $(B)$ - space with the norm $\ds \|\phi\|_{(s_p),(s'_p)}=\sup_{\alpha}\frac{\left\|D^{\alpha}\phi(\cdot)e^{N_{s'_p}(|\cdot|)}\right\|_{L^{\infty}}}
{M_{\alpha}\prod_{j=1}^{|\alpha|}s_j}$. So, it is enough to prove that, for arbitrary $(s_p),(s'_p)\in\mathfrak{R}$, $\|Au\|_{(s_p),(s'_p)}$ is bounded for all $a\in B_1$ and $u\in B_2$. So, let $(s_p),(s'_p)\in\mathfrak{R}$ be fixed. Represent $Au$ in the form
\beqs
Au(x)=\frac{1}{(2\pi)^d}\int_{\RR^{2d}}e^{i(x-y)\xi}
\frac{1}{P_{l_p}(\xi)}P_{l_p}(D_y)\left(\frac{1}{P_{l_p}(y-x)}P_{l_p}(D_\xi)a(x,y,\xi)u(y)\right)dyd\xi.
\eeqs
In the proof of lemma \ref{45} we proved that the choice of $P_{l_p}$ depends only on $(k_p)$ such that (\ref{tt1}) holds. But $(k_p)$ is the same for all $a\in B_1$ hence we can choose $P_{l_p}$ the same for all $a\in B_1$. By proposition \ref{orn}, $P_{l_p}(\xi)$ is never zero and we can choose $(l_p)\in\mathfrak{R}$ such that, $\ds \left|D^{\alpha}_{\xi}\frac{1}{P_{l_p}(\xi)}\right|\leq c'_1\frac{\alpha!}{d_1^{|\alpha|}}e^{-N_{r_p}(|\xi|)}$, for some $c'_1>0$ and $d_1>0$, where $(r_p)\in\mathfrak{R}$ is chosen such that $\ds \int_{\RR^d}\frac{e^{N_{k_p}(|\xi|)}e^{N_{s_p}(|\xi|)}}
{e^{N_{r_p}(|\xi|)}}d\xi$ converges and $e^{N_{2r_p}(|x|)}\geq \tilde{C} e^{N_{s'_p}(|x|)}e^{N_{k_p}(|x|)}$ (see also the remarks after the proof of lemma \ref{45}). On the other hand, if we represent $\ds P_{l_p}(\xi)=\sum_{\alpha}c_{\alpha}\xi^{\alpha}$, then for every $L'>0$ there exists $C'>0$ such that $\left|c_{\alpha}\right|\leq C'L'^{|\alpha|}/M_{\alpha}$. For $a\in B_1$ and $u\in B_2$, similarly as in the $(M_p)$ case, one obtains the estimate\\
$\ds\left|D^{\beta}_x\left(e^{i(x-y)\xi}\frac{1}{P_{l_p}(\xi)}P_{l_p}(D_y)
\left(\frac{1}{P_{l_p}(y-x)}P_{l_p}(D_\xi)a(x,y,\xi)u(y)\right)\right)\right|$
\beqs
&\leq&C_2\tilde{C}_1\tilde{C}_2\frac{M_{\beta}e^{N_{k_p}(|\xi|)}e^{N_{k_p}(|x|)}e^{N_{k_p}(|y|)}}
{e^{N_{r_p}(|\xi|)}e^{M(t|y|)}e^{N_{r_p}(|x-y|)}}\\
&{}&\cdot\sum_{\beta'\leq\beta}\sum_{\alpha,\gamma}\sum_{\gamma'\leq\gamma}\sum_{\gamma''\leq\gamma'}
\sum_{\beta''\leq\beta'}{\beta\choose\beta'}{\beta'\choose\beta''}{\gamma\choose\gamma'}{\gamma'\choose\gamma''}
L'^{|\alpha|+|\gamma|}\frac{|\xi|^{|\beta|-|\beta'|}}{M_{\beta-\beta'}}\cdot
\prod_{j=1}^{|\beta|-|\beta'|}\frac{1}{s_j}\cdot\prod_{j=1}^{|\beta|-|\beta'|}s_j\\
&{}&\cdot\frac{(\beta'-\beta''+\gamma'-\gamma'')!}
{d_1^{|\beta'|-|\beta''|+|\gamma'|-|\gamma''|}}\cdot\frac{(2hL)^{|\alpha|+|\beta''|+|\gamma''|}H^{|\beta''|+|\gamma''|}}
{t^{|\gamma|-|\gamma'|}M_{\beta'-\beta''}M_{\gamma'-\gamma''}}\\
&\leq&C_3\tilde{C}_1\tilde{C}_2\frac{M_{\beta}e^{N_{k_p}(|\xi|)}e^{N_{s_p}(|\xi|)}e^{N_{k_p}(|x|)}e^{N_{k_p}(|y|)}}
{e^{N_{r_p}(|\xi|)}e^{M(t|y|)}e^{N_{r_p}(|x-y|)}}\\
&{}&\cdot\sum_{\beta'\leq\beta}\sum_{\alpha,\gamma}\sum_{\gamma'\leq\gamma}\sum_{\gamma''\leq\gamma'}
\sum_{\beta''\leq\beta'}{\beta\choose\beta'}{\beta'\choose\beta''}{\gamma\choose\gamma'}{\gamma'\choose\gamma''}
(2hLL')^{|\alpha|}L'^{|\gamma|}\\
&{}&\cdot\frac{(\beta'-\beta''+\gamma'-\gamma'')!}
{d_1^{|\beta'|-|\beta''|+|\gamma'|-|\gamma''|}M_{\beta'-\beta''+\gamma'-\gamma''}}
\cdot\frac{(2hL)^{|\beta''|+|\gamma''|}H^{|\beta'|+|\gamma'|}}{t^{|\gamma|-|\gamma'|}}\cdot
\prod_{j=1}^{|\beta|-|\beta'|}s_j\\
&\leq&C_4\tilde{C}_1\tilde{C}_2\frac{M_{\beta}e^{N_{k_p}(|\xi|)}e^{N_{s_p}(|\xi|)}e^{N_{k_p}(|x|)}e^{N_{k_p}(|y|)}}
{e^{N_{r_p}(|\xi|)}e^{M(t|y|)}e^{N_{r_p}(|x-y|)}}\\
&{}&\cdot\sum_{\beta'\leq\beta}\sum_{\alpha,\gamma}\sum_{\gamma'\leq\gamma}\sum_{\gamma''\leq\gamma'}
\sum_{\beta''\leq\beta'}{\beta\choose\beta'}{\beta'\choose\beta''}{\gamma\choose\gamma'}{\gamma'\choose\gamma''}
(2hLL')^{|\alpha|}L'^{|\gamma|}\\
&{}&\cdot\frac{(2hL)^{|\beta''|+|\gamma''|}H^{|\beta'|+|\gamma'|}}
{t^{|\gamma|-|\gamma'|}}\cdot\prod_{j=1}^{|\beta|-|\beta'|}s_j\\
&\leq&C_5\tilde{C}_1\tilde{C}_2\frac{M_{\beta}e^{N_{k_p}(|\xi|)}e^{N_{s_p}(|\xi|)}e^{N_{k_p}(|x|)}e^{N_{k_p}(|y|)}}
{e^{N_{r_p}(|\xi|)}e^{M(t|y|)}e^{N_{r_p}(|x-y|)}}\\
&{}&\cdot2^{|\beta|}\prod_{j=1}^{|\beta|}s_j\sum_{\alpha,\gamma}
(2hLL')^{|\alpha|}\left(\frac{L'}{t}+2hLL'H+L'H\right)^{|\gamma|},
\eeqs
where, in the last inequality, we used that $\ds\frac{\lambda^p}{\prod_{j=1}^{p}s_j}\longrightarrow 0$, when $p\longrightarrow \infty$, for any fixed $\lambda>0$ (i.e. it is bounded for all $p\in\ZZ_+$). (This follows from the fact that $(s_p)\in\mathfrak{R}$.) Note that $e^{N_{2r_p}(|x|)}\leq C_6 e^{N_{r_p}(|x-y|)}e^{N_{r_p}(|y|)}$. Also, the integral $\ds\int_{\RR^d}\frac{e^{N_{k_p}(|y|)}e^{N_{r_p}(|y|)}}
{e^{M(t|y|)}}dy$ converges (this easily follows from the fact that $e^{N_{k_p}(|y|)}\leq c''e^{M(t'|y|)}$ for every $t'>0$, where the constant $c''$ depends on $t'$; similarly for $e^{N_{r_p}(|y|)}$). Take $L'$ such that the sum converges. Then, for the derivatives of $Au$, we obtain $\ds\left|D^{\beta}_x Au(x)\right|\leq C\tilde{C}_1\tilde{C}_2e^{-N_{s'_p}(|x|)}M_{\beta}\prod_{j=1}^{|\beta|}(2s_j)$, i.e. $\ds \|Au\|_{(2s_p),(s'_p)}\leq C\tilde{C}_1\tilde{C}_2$, for all $a\in B_1$ and $u\in B_2$.
\end{proof}
Let $\tau\in\RR$ be fixed. The inclusion $\Gamma_{A_p,B_p,\rho}^{*,\infty}\left(\RR^{2d}\right)\longrightarrow \Pi_{A_p,B_p,\rho}^{*,\infty}\left(\RR^{3d}\right)$, $b\in\Gamma_{A_p,B_p,\rho}^{*,\infty}\left(\RR^{2d}\right)$, $b\mapsto a$, where $a(x,y,\xi)=b((1-\tau)x+\tau y,\xi)$, is continuous. Moreover, if $u,\phi\in\SSS^*\left(\RR^d\right)$ such that $\phi(0)=1$, by theorem \ref{17}, we have
\beqs
\Op_{\tau}(b)u(x)&=&\frac{1}{(2\pi)^d}\int_{\RR^d}\int_{\RR^d}e^{i(x-y)\xi}b((1-\tau)x+\tau y,\xi)u(y)dyd\xi\\
&=&\lim_{\delta\rightarrow 0^+}\frac{1}{(2\pi)^d}\int_{\RR^{2d}}e^{i(x-y)\xi}b((1-\tau)x+\tau y,\xi)\phi(\delta\xi)u(y)dyd\xi.
\eeqs
Hence, the operator $\Op_{\tau}(b)$ coincides with the operator $B$ corresponding to $b$ when we observe $b((1-\tau)x+\tau y,\xi)$ as an element of $\Pi_{A_p,B_p,\rho}^{*,\infty}\left(\RR^{3d}\right)$. We get that the mapping $(b,u)\mapsto \Op_{\tau}(b)u$, $\Gamma_{A_p,B_p,\rho}^{*,\infty}\left(\RR^{2d}\right)\times
\SSS^*\left(\RR^d\right)\longrightarrow\SSS^*\left(\RR^d\right)$, is hypocontinuous. For $b\in \Gamma_{A_p,B_p,\rho}^{*,\infty}\left(\RR^{2d}\right)$, denote its kernel by $K(x,y)$. If we consider the transposed of the operator $\Op_{\tau}(b)$ then its kernel is $K(y,x)$. On the other hand, by (\ref{3}),
\beqs
K(y,x)=\mathcal{F}^{-1}_{\xi\rightarrow x-y}(b)(\tau x+(1-\tau) y,-\xi).
\eeqs
Hence ${}^t\Op_{\tau}(b(x,\xi))=\Op_{1-\tau}(b(x,-\xi))$ i.e. ${}^{t}\Op_{\tau}(b)$ is pseudo-differential operator and by the above it is a continuous mapping from $\SSS^*\left(\RR^d\right)$ to $\SSS^*\left(\RR^d\right)$. Using this we can extend $\Op_{\tau}(b)$ to a continuous operator from  $\SSS'^*\left(\RR^d\right)$ to $\SSS'^*\left(\RR^d\right)$ in the following way
\beqs
\langle \Op_{\tau}(b)u,v\rangle=\langle u,{}^{t}\Op_{\tau}(b)v\rangle,\, u\in\SSS'^*\left(\RR^d\right), v\in\SSS^*\left(\RR^d\right).
\eeqs

We need the following technical lemmas.

\begin{lemma}\label{69}
Let $M_p$ be a sequence which satisfies $(M.1)$, $(M.2)$ and $(M.3)$ and $m$ a positive real. Then, for all $n\in\ZZ_+$, $M(mm_n)\leq 2(c_0m+2)n\ln H+\ln c_0$, where $c_0$ is the constant form the conditions $(M.2)$ and $(M.3)$. If $(t_p)\in\mathfrak{R}$ then, $N_{t_p}(mm_n)\leq n\ln H+\ln c$ for all $n\in\ZZ_+$, where the constant $c$ depends only on $M_p$, $(t_p)$ and $m$, but not on $n$.
\end{lemma}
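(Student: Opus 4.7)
The plan is to locate the maximizer of $p \mapsto (mm_n)^p/M_p$ in the defining supremum of $M(mm_n)$ and evaluate it directly. By $(M.1)$ the sequence $m_p = M_p/M_{p-1}$ is non-decreasing, so the ratio $(mm_n)/m_{p+1}$ crosses $1$ exactly once as $p$ grows; hence $(mm_n)^p/M_p$ is unimodal in $p$ and its maximum is attained at $p^* := \max\{p : m_p \leq mm_n\}$. The key inequality comes from $(M.2)$: since $m_n \leq m_{n+j}$ for all $j \geq 0$,
\[
m_n^p \leq \prod_{j=1}^p m_{n+j} = \frac{M_{n+p}}{M_n} \leq c_0 H^{n+p} M_p,
\]
and in particular $m_k^k/M_k \leq c_0 H^{2k}$ on taking $n=p=k$. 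Applying this with $k = p^*+1$ together with $mm_n < m_{p^*+1}$,
\[
\frac{(mm_n)^{p^*}}{M_{p^*}} \leq \frac{m_{p^*+1}^{p^*}}{M_{p^*}} = \frac{m_{p^*+1}^{p^*+1}}{M_{p^*+1}} \leq c_0 H^{2(p^*+1)},
\]
so $M(mm_n) \leq \ln c_0 + 2(p^*+1)\ln H$.

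It remains to bound $p^*+1$ linearly in $n$. If $p^* \leq n-1$ we are done because $p^*+1 \leq n \leq (c_0 m + 2)n$. Otherwise $p^* \geq n$, and $(M.3)$ with $q = n-1$ gives
\[
\frac{p^* - n + 1}{m_{p^*}} \leq \sum_{p=n}^{p^*} \frac{1}{m_p} \leq \sum_{p=n}^{\infty} \frac{1}{m_p} \leq c_0\, \frac{n-1}{m_n}.
\]
Combined with $m_{p^*}/m_n \leq m$ this yields $p^* + 1 \leq (c_0 m + 1)(n-1) + 1 \leq (c_0 m + 2) n$, and substituting above produces the stated bound $M(mm_n) \leq 2(c_0 m + 2) n \ln H + \ln c_0$.

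For the second assertion the same key inequality gives
\[
\frac{(mm_n)^p}{M_p \prod_{j=1}^p t_j} \leq c_0 H^n \prod_{j=1}^p \frac{mH}{t_j}.
\]
Since $(t_p) \in \mathfrak{R}$ is non-decreasing with $t_j \to \infty$, we have $mH/t_j < 1/2$ for $j$ sufficiently large, so $\sup_p \prod_{j=1}^p (mH/t_j)$ is finite, bounded by a constant $c'$ depending only on $m$, $(t_p)$, and $M_p$; hence $N_{t_p}(mm_n) \leq n \ln H + \ln(c_0 c')$. The main subtle step I expect is the identity $m_{p^*+1}^{p^*}/M_{p^*} = m_{p^*+1}^{p^*+1}/M_{p^*+1}$, which lets the core inequality $m_k^k/M_k \leq c_0 H^{2k}$ be applied at the optimal index $k = p^*+1$; this is what eliminates any $\ln m$ from the main term and forces all $m$-dependence to enter only through the $(M.3)$-based control of $p^*$.
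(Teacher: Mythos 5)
Your proof takes essentially the same route as the paper's: both rest on $(M.2)$ in the form $m_k^p\le M_{k+p}/M_k\le c_0H^{k+p}M_p$ to bound the supremand, together with $(M.3)$ to force the relevant index to be at most $(c_0m+2)n$; you locate the maximizer $p^*$ of the original supremum and bound it, while the paper first replaces $mm_n$ by $m_{kn}$ with $k=[c_0m]+2$ and then truncates the supremum at $p\le kn$ — the computations are interchangeable, and your treatment of the second assertion is identical to the paper's. One slip: in the case $p^*\ge n$ you invoke $(M.3)$ with $q=n-1$, which is only available for $n\ge 2$ since $(M.3)$ is stated for $q\in\ZZ_+$; for $n=1$ your chain would read $\sum_{p\ge 1}1/m_p\le 0$. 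The fix is to use $q=n$ instead: $\sum_{p=n+1}^{\infty}1/m_p\le c_0 n/m_{n+1}\le c_0n/m_n$, so for $p^*\ge n+1$ one gets $p^*-n\le c_0 n\, m_{p^*}/m_n\le c_0mn$ and hence $p^*+1\le (c_0m+1)n+1\le(c_0m+2)n$, while $p^*\le n$ gives $p^*+1\le 2n$ directly; with that adjustment the stated constant is recovered for all $n\in\ZZ_+$.
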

\begin{proof} By $(M.3)$, for all $p\geq n+1$, $p\in\NN$, we have
\beqs
\frac{1}{m_{n+1}}+\frac{1}{m_{n+2}}+...+\frac{1}{m_p}\leq c_0\frac{n}{m_{n+1}}\leq c_0\frac{n}{m_n}.
\eeqs
If we multiply the above inequality with $m_p$ and use the fact that the sequence $m_n$ is monotonically increasing, we obtain $\ds p-n\leq c_0\frac{nm_p}{m_n}$, i.e. $\ds\frac{mm_n}{m_p}\leq c_0\frac{mn}{p-n}$. Hence, for $p\geq [c_0m]n+2n\geq n+1$, we obtain that $mm_n\leq m_p$. Denote by $k$ the term $[c_0m]+2$. $M(\rho)$ is monotonically increasing, so $M(mm_n)\leq M(m_{kn})$. For $p\geq kn$, we have
\beqs
\frac{m_{kn}^{p+1}}{M_{p+1}}=\frac{m_{kn}^p}{M_p}\cdot\frac{m_{kn}}{m_{p+1}}\leq\frac{m_{kn}^p}{M_p}.
\eeqs
Hence $\ds M(m_{kn})=\sup_p \ln_+\frac{m_{kn}^p}{M_p}=\sup_{p\leq kn} \ln_+\frac{m_{kn}^p}{M_p}$. But, for $p\leq kn$, $p\in\NN$, we have
\beqs
\frac{m_{kn}^p}{M_p}\leq \frac{m_{kn+1}\cdot m_{kn+2}\cdot...\cdot m_{kn+p}}{M_p}=\frac{M_{kn+p}}{M_pM_{kn}}\leq c_0H^{kn+p}\leq c_0H^{2kn},
\eeqs
where, in the second inequality, we used $(M.2)$. We obtained
\beqs
M(mm_n)\leq M(m_{kn})=\sup_{p\leq kn} \ln_+\frac{m_{kn}^p}{M_p}\leq 2kn\ln H+\ln c_0\leq 2(c_0m+2)n\ln H+\ln c_0,
\eeqs
which completes the proof for the first part. For the second part, denote by $T_p$ the product $\prod_{j=1}^p t_j$. Observe that, for $p\in\ZZ_+$, we have
\beqs
\frac{m^pm_n^p}{T_p M_p}\leq \frac{m^pm_{n+1}\cdot m_{n+2}\cdot...\cdot m_{n+p}}{T_pM_p}=\frac{m^p M_{n+p}}{T_pM_pM_n}\leq c_0H^n\frac{(mH)^p}{T_p}\leq c H^n,
\eeqs
where, in the last inequality, we used the fact that $(t_p)$ monotonically increases to infinity. Obviously $c$ does not depend on $p$ or $n$, only on $m$, $(t_p)$ and $M_p$. From this we obtain $N_{t_p}(mm_n)\leq n\ln H+\ln c$, which completes the second part of the lemma.
\end{proof}

\begin{lemma}\label{70}
Let $M_p$ be a sequence which satisfies $(M.1)$ and $(M.3)'$ and $\ds R>1+\frac{1}{M_1}$ be arbitrary. There exist a sequence $\psi_n(\xi)\in\DD^*\left(\RR^d\right)$, $n\in\NN$, such that $\ds\sum_{n=0}^{\infty}\psi_n=1$, $\ds\supp\psi_0\subseteq\left\{\xi\in\RR^d|\langle \xi\rangle< 3RM_1\right\}$, $\supp\psi_n\subseteq \left\{\xi\in\RR^d|2Rm_n<\langle \xi\rangle<3Rm_{n+1}\right\}$, for $n\in\ZZ_+$ and for every $h>0$ there exists $C>0$, resp. there exist $h>0$ and $C>0$ such that
\beqs
\left|D^{\alpha}\psi_0(\xi)\right|\leq C\left(\frac{h}{RM_1}\right)^{|\alpha|}M_{\alpha}, \mbox{ and } \left|D^{\alpha}\psi_n(\xi)\right|\leq C\left(\frac{h}{Rm_{n}}\right)^{|\alpha|}M_{\alpha},\, \forall n\in\ZZ_+,
\eeqs
for all $\xi\in\RR^d$ and $\alpha\in\NN^d$.
\end{lemma}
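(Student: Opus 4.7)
The plan is to obtain $\{\psi_n\}$ by telescoping a sequence of ultradifferentiable radial cutoffs adapted to the scale $Rm_n$. First, for each $n\in\ZZ_+$, I will construct a function $\chi_n\in\DD^{*}(\RR^d)$ with $0\leq\chi_n\leq 1$ satisfying $\chi_n(\xi)=1$ for $\langle\xi\rangle\leq 2Rm_n$, $\chi_n(\xi)=0$ for $\langle\xi\rangle\geq 3Rm_n$, and with the bound
\begin{equation*}
|D^\alpha \chi_n(\xi)|\leq C\left(\frac{h}{Rm_n}\right)^{|\alpha|}M_\alpha,\quad \alpha\in\NN^d,\ \xi\in\RR^d,
\end{equation*}
valid for every $h>0$ in the $(M_p)$ case and for some $h>0$ in the $\{M_p\}$ case. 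This is the standard construction of compactly supported ultradifferentiable cutoffs (as in \cite{Komatsu1}): fix a nonnegative radial bump $\theta\in\DD^*(\RR^d)$ of the appropriate class supported in the unit ball with $\int\theta=1$, and convolve the characteristic function of the shell $\{\langle\xi\rangle\leq (5/2)Rm_n\}$ with a product of mollifiers whose supports are scaled so that their total radius is at most $(1/2)Rm_n$. Condition $(M.3)'$ on $M_p$ is exactly what makes this infinite convolution converge with the stated derivative bounds, and the hypothesis $R>1+1/M_1$ guarantees that the transition annulus has positive width even for $n=1$.

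Next, set $\psi_0=\chi_1$ and $\psi_n=\chi_{n+1}-\chi_n$ for $n\in\ZZ_+$. Telescoping gives $\sum_{n=0}^N\psi_n=\chi_{N+1}$. Because $m_n\to\infty$, for each fixed $\xi$ we have $\chi_{N+1}(\xi)=1$ for all sufficiently large $N$, so $\sum_{n=0}^\infty\psi_n(\xi)=1$ pointwise, with only finitely many nonzero terms at each $\xi$. The support conditions are immediate: $\supp\psi_0=\supp\chi_1\subseteq\{\langle\xi\rangle\leq 3RM_1\}$ since $m_1=M_1$; for $n\geq 1$, the difference $\chi_{n+1}-\chi_n$ vanishes wherever both equal $1$, i.e.\ on $\{\langle\xi\rangle\leq 2Rm_n\}$, and is $0$ outside $\supp\chi_{n+1}\subseteq\{\langle\xi\rangle\leq 3Rm_{n+1}\}$.

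The derivative bounds for $\psi_n$ then follow from the triangle inequality $|D^\alpha\psi_n|\leq|D^\alpha\chi_{n+1}|+|D^\alpha\chi_n|$ combined with $m_n\leq m_{n+1}$, so both summands are dominated by $C(h/(Rm_n))^{|\alpha|}M_\alpha$; the factor $2$ is absorbed into a new constant $C$. The estimate for $\psi_0=\chi_1$ uses $m_1=M_1$ and is built in to the construction.

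The main obstacle is the construction of the $\chi_n$ with the sharp $1/(Rm_n)$ scaling in the derivative estimate, uniformly in $n$ and in both the Beurling and Roumieu regimes. This is the technical heart of the lemma, but it is a standard application of the Komatsu-style infinite-convolution construction: the shape of the cutoff is fixed once and for all and only its spatial scale varies with $n$, so uniformity in $n$ is automatic, while $(M.3)'$ controls the summability of the mollifier scales that underpins the derivative bound.
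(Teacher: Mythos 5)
Your proof is correct and is essentially the paper's argument: the paper fixes a single cutoff $\phi\in\DD^*$ with $\phi=1$ for $\langle\xi\rangle<\sqrt6$ and $\phi=0$ for $\langle\xi\rangle>3$, sets $\psi_0(\xi)=\phi(\xi/(RM_1))$ and $\psi_n(\xi)=\phi(\xi/(Rm_{n+1}))-\phi(\xi/(Rm_n))$, and the derivative bounds and telescoping are exactly as you describe. The only detail to watch is that to land in the \emph{open} annulus $2Rm_n<\langle\xi\rangle<3Rm_{n+1}$ the plateau of your $\chi_n$ must strictly exceed $\{\langle\xi\rangle\leq 2Rm_n\}$ (the paper arranges this by taking $\phi=1$ up to $\sqrt6$ rather than $2$), a cosmetic adjustment to your construction.
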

\begin{proof} Let $\phi\in\DD^*$ such that $0\leq\phi\leq 1$, $\phi(\xi)=1$, for $\langle \xi\rangle<\sqrt{6}$, $\phi(\xi)=0$, for $\langle \xi\rangle>3$. Put
\beqs
\psi_0(\xi)=\phi\left(\frac{\xi}{RM_1}\right),\, \psi_n(\xi)=\phi\left(\frac{\xi}{Rm_{n+1}}\right)-\phi\left(\frac{\xi}{Rm_n}\right).
\eeqs
It is easy to check that $\psi_n$, $n\in\NN$, satisfy the claim in the lemma.
\end{proof}

Let $\ds\rho_0=\inf\{\rho\in\RR_+|A_p\subset M_p^{\rho}\}$. Obviously $0<\rho_0\leq1$. In general, the infimum can not be reached.\\
\indent Counterexample. Let $r_1=1$ and $\ds r_p=p^{1-1/(2\sqrt{\ln p})}$ for $p\in\NN$, $p\geq 2$. The sequences $r_p$ and $p^{1/(2\sqrt{\ln p})}$ are monotonically increasing. Put $\ds R_p=\prod_{j=1}^pr_p$. Take $M_0=1$, $M_p=p!^2 R_p$ and $A_p=p!^2$. Then, obviously, $A_p$ satisfies $(M.1)$, $(M.2)$ and $(M.3)$. One easily checks that $M_p$ satisfies $(M.1)$, $(M.2)$ and $(M.3)$. It is clear that $A_p\subset M_p$. Note that $A_p\not\subset M_p^{2/3}$. In the contrary, there will exist $C>0$ and $L>0$ such that $p!^2\leq CL^p p!^{4/3} R_p^{2/3}$, i.e. $\ds\frac{p!}{L^{3p/2}R_p}\leq C_1$, for all $p\in\ZZ_+$, where we put $C_1=C^{3/2}$. This is impossible, because this means that $\ds\sum_{j=1}^p\ln \frac{j}{L^{3/2}r_j}$ is bounded from above for all $p\in\ZZ_+$, but
\beqs
\lim_{j\rightarrow\infty}\ln \frac{j}{L^{3/2}r_j}=\lim_{j\rightarrow\infty}\ln \frac{j^{1/(2\sqrt{\ln j})}}{L^{3/2}}=
\lim_{j\rightarrow\infty}\left(\frac{\sqrt{\ln j}}{2}-\frac{3}{2}\ln L\right)=\infty.
\eeqs
On the other hand, note that for $\lambda>2/3$, $A_p\subset M_p^{\lambda}$. This is true because
\beqs
\frac{p!^2}{p!^{2\lambda}R_p^{\lambda}}=\frac{p!^{2(1-\lambda)}}{R_p^{\lambda}}=\prod_{j=2}^p\frac{j^{2(1-\lambda)}}
{j^{\lambda-\lambda/(2\sqrt{\ln j})}}=\prod_{j=2}^p\frac{j^{\lambda/(2\sqrt{\ln j})}}{j^{3\lambda-2}}
\eeqs
and the last term converges to zero when $p\longrightarrow \infty$ (note that $3\lambda-2>0$ when $\lambda>2/3$). From now on we will assume that $\rho$ is such that $\rho_0\leq \rho\leq 1$ if the infimum can be reached, otherwise $\rho_0<\rho\leq 1$.\\
\indent For $0<r<1$, define the set $\Omega_r=\left\{(x,y)\in\RR^{2d}||x-y|>r\langle x\rangle\right\}$.

\begin{lemma}\label{pll}
Let $0<r<1$. There exists $\theta\in\EE^*\left(\RR^{2d}\right)$ such that $0\leq \theta\leq 1$, $\theta=0$ on $\RR^{2d}\backslash \Omega_{r/4}$, $\theta=1$ on $\Omega_{3r/4}$ and for every $h>0$ there exists $C>0$, resp. there exist $h>0$ and $C>0$, such that $\left|D^{\beta}_x D^{\gamma}_y \theta(x,y)\right|\leq C h^{|\beta|+|\gamma|}M_{\beta+\gamma}$, for all $(x,y)\in\RR^{2d}$, $\alpha,\beta\in\NN^d$.
\end{lemma}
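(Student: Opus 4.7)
The strategy is to realize $\theta$ as a composition $\eta\circ F$, where $F$ is a real-analytic "distance function" measuring the relative size of $|x-y|$ and $\langle x\rangle$, and $\eta$ is a standard one-variable ultradifferentiable cutoff. Since $(M.3)$ implies $(M.3)'$, Komatsu's construction yields $\eta\in \EE^*(\RR)$ with $0\leq\eta\leq 1$, $\eta(t)=0$ for $t\leq (r/4)^2$ and $\eta(t)=1$ for $t\geq (3r/4)^2$, satisfying derivative bounds of class $*$: $|\eta^{(k)}(t)|\leq Ch_1^k M_k$ for every $h_1>0$ in the $(M_p)$ case, respectively for some $h_1>0$ in the $\{M_p\}$ case.

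Put $F(x,y)=|x-y|^2/\langle x\rangle^2$ and define $\theta(x,y)=\eta(F(x,y))$. The equivalence $|x-y|>s\langle x\rangle \Leftrightarrow F(x,y)>s^2$ with $s=r/4$ and $s=3r/4$ immediately gives the prescribed values of $\theta$ on $\Omega_{3r/4}$ and on $\RR^{2d}\setminus\Omega_{r/4}$.

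For the derivative estimates, write $F(x,y)=(|x|^2-2x\cdot y+|y|^2)\psi(x)$ with $\psi(x)=\langle x\rangle^{-2}$. Since $\psi$ extends holomorphically across a complex strip of width comparable to $\langle x\rangle/2$, the Cauchy inequalities give $|D^\alpha_x\psi(x)|\leq C\alpha!\langle x\rangle^{-2-|\alpha|}$. Combined with Leibniz and the fact that $\partial^\gamma_y$ of the quadratic $|x|^2-2x\cdot y+|y|^2$ vanishes for $|\gamma|\geq 3$, this yields
\beqs
|D^\beta_x D^\gamma_y F(x,y)|\leq C(\beta+\gamma)!\langle x\rangle^{-|\beta|-|\gamma|}\quad\text{for }|\gamma|\leq 2,
\eeqs
and $0$ for $|\gamma|\geq 3$, uniformly on the slab $\{|y|\leq 2\langle x\rangle\}$ which contains $\supp\theta$ (there $F\leq (3r/4)^2<1$, hence $|y|\leq |x|+(3r/4)\langle x\rangle\leq 2\langle x\rangle$).

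Now apply the multi-variable Faà di Bruno formula to $\theta=\eta\circ F$:
\beqs
D^\beta_x D^\gamma_y\theta(x,y)=\sum_{k=1}^{|\beta|+|\gamma|}\eta^{(k)}(F(x,y))\cdot B_{\beta,\gamma,k}(F)(x,y),
\eeqs
where $B_{\beta,\gamma,k}(F)$ is a Bell polynomial in the mixed partial derivatives of $F$ of total order at most $|\beta|+|\gamma|$ with exactly $k$ factors. Inserting the bounds on $\eta^{(k)}$ and on the derivatives of $F$, the $\langle x\rangle^{-|\beta|-|\gamma|}$ factors cancel any growth in $\langle x\rangle$, leaving factorial sums that combine with the $M_k$-weights via condition $(M.2)$ to collapse products $M_k\cdot(\beta+\gamma)!$ into $M_{|\beta|+|\gamma|}$ up to an exponential factor $L^{|\beta|+|\gamma|}$. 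Choosing $h_1$ small enough in the Beurling case (arbitrary in the Roumieu case), this produces the required estimate $|D^\beta_x D^\gamma_y\theta(x,y)|\leq Ch^{|\beta|+|\gamma|}M_{\beta+\gamma}$ for every, resp.\ some, $h>0$.

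The main obstacle is the Faà di Bruno bookkeeping in several variables: one must control the Bell-polynomial sums over partitions of the multi-index $(\beta,\gamma)$ and uniformly merge the products $M_k\cdot (\beta+\gamma)!$ coming from $\eta^{(k)}$ and from the derivatives of $F$ into the single weight $M_{|\beta|+|\gamma|}$ via $(M.2)$, in a manner that preserves the prescribed $h$-dependence. The key observation that makes this work globally, despite $(x,y)$ ranging over all of $\RR^{2d}$, is that the $\langle x\rangle^{-|\alpha|}$-decay of the derivatives of $F$ compensates exactly for the fact that $F$ is only real-analytic (rather than in $\EE^*$ a priori), reducing the problem at scale $\langle x\rangle$ to a bounded-scale composition estimate.
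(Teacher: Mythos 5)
Your construction is correct, but it follows a genuinely different route from the paper's. The paper's proof is a one-line mollification: take $f$ to be the indicator function of $\Omega_{r/2}$ and a single fixed $\mu\in\DD^*\left(\RR^{2d}\right)$, $\mu\geq 0$, supported in the ball of radius $r/16$ with $\int\mu=1$, and set $\theta=f*\mu$. Because $\langle x\rangle\geq 1$, the distance from $\Omega_{3r/4}$ to the complement of $\Omega_{r/2}$, and from $\Omega_{r/2}$ to the complement of $\Omega_{r/4}$, is bounded below by a fixed positive constant, so a mollifier of fixed width works globally, and the derivative bounds are immediate from $D^{\beta}_xD^{\gamma}_y\theta=f*\left(D^{\beta}_xD^{\gamma}_y\mu\right)$ and $\|D^{\beta}_xD^{\gamma}_y\mu\|_{L^1}\leq Ch^{|\beta|+|\gamma|}M_{\beta+\gamma}$. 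Your composition $\eta\circ F$ with $F=|x-y|^2\langle x\rangle^{-2}$ gives a more explicit $\theta$, but at the cost of the Fa\`a di Bruno analysis that the paper's route avoids entirely.

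One step in your sketch needs repair. In the Beurling case, ``choosing $h_1$ small enough'' does not by itself give the bound for \emph{every} $h>0$: the Fa\`a di Bruno terms with few factors (e.g. $k=1$, namely $\eta'(F)\,D^{\beta}_xD^{\gamma}_yF$) carry only $h_1^1$ but still contribute $C^{|\beta|+|\gamma|}(\beta+\gamma)!\,\langle x\rangle^{-|\beta|-|\gamma|}$ with a fixed geometric constant $C$ coming from the Cauchy estimates for $F$. To dominate this by $h^{|\beta|+|\gamma|}M_{\beta+\gamma}$ for arbitrary $h$ you must use the strong comparison $p!\leq C_{\epsilon}\epsilon^{p}M_p$ for every $\epsilon>0$, which holds here since $(M.3)'$ forces $p/m_p\to 0$; combining this with $M_kM_{n-k}\leq M_n$ (from $(M.1)$ and $M_0=1$) and $(M.2)$ then closes the argument. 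With that supplement your proof is complete.
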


\begin{proof} Let $f(x,y)=1$ on $\Omega_{r/2}$ and $f(x,y)=0$ on $\RR^{2d}\backslash \Omega_{r/2}$. Let $\mu\in\DD^*\left(\RR^{2d}\right)$ is such that $\mu\geq 0$ with support in the closed ball with center at the origin and radius $r/16$ and $\ds\int_{\RR^{2d}}\mu(x,y)dxdy=1$. Put $\theta=f*\mu$. Then, one easily checks that $\theta$ satisfies the conditions in the lemma.
\end{proof}

\begin{proposition}\label{72}
Let $a\in\Pi_{A_p,B_p,\rho}^{*,\infty}\left(\RR^{3d}\right)$ and $A$ be the operator corresponding to $a$ as defined above. The kernel $K$ of this operator is an element of $\mathcal{C}^{\infty}\left(\Omega_r\right)$ for every $0<r<1$ and for every such $\Omega_r$ and every $h>0$, resp. there exists $h>0$, such that
\beq\label{75}
\sup_{\beta,\gamma\in\NN^d}\sup_{(x,y)\in\overline{\Omega_r}}\frac{h^{\beta+\gamma}\left|D^{\beta}_xD^{\gamma}_y K(x,y)\right|e^{M(h|(x,y)|)}}{M_{\beta+\gamma}}<\infty.
\eeq
Moreover, if there exists $r$, $0<r<1$, such that $a(x,y,\xi)=0$ for $(x,y,\xi)\in\left(\RR^{2d}\backslash\Omega_r\right)\times\RR^d$ then $K\in\SSS^*\left(\RR^{2d}\right)$, i.e. $A$ is *-regularizing.
\end{proposition}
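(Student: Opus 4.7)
The plan is to show that on $\Omega_r$ the kernel $K$ coincides with a smooth function given by an absolutely convergent iterated oscillatory integral. The key geometric input is that on $\Omega_r$, $|x-y|>r\langle x\rangle$ implies $\langle y\rangle\leq\langle x\rangle+|x-y|\leq(1+1/r)|x-y|$, hence $|(x,y)|\leq C_r|x-y|$. Combined with the estimate $|1/P_l(y-x)|\leq Ce^{-M(\sigma|x-y|)}$ from Proposition \ref{orn} (and its Roumieu analogue with $N_{k_p}$), a single factor of $1/P_l(y-x)$ will therefore contribute the required $e^{-M(h|(x,y)|)}$ decay in (\ref{75}) once $l$ (resp.\ $(l_p)$) is chosen to make $\sigma$ large enough (resp.\ small enough).

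To produce such a factor, I test $K$ against $\phi\in\DD^*(V)$ for a small $V\subset\subset\Omega_r$ and start from the regularized expression
\[
\langle K,\phi\rangle=\lim_{\delta\to 0^+}\frac{1}{(2\pi)^d}\int\!\!\!\int\!\!\!\int e^{i(x-y)\xi}a(x,y,\xi)\chi(\delta\xi)\phi(x,y)\,dxdyd\xi,
\]
with $\chi\in\SSS^*$, $\chi(0)=1$, as in Lemma \ref{45}. Integration by parts in $\xi$ via $e^{i(x-y)\xi}=P_l(D_\xi)e^{i(x-y)\xi}/P_l(y-x)$ extracts the factor $1/P_l(y-x)$ out front. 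To pass to $\delta\to 0^+$ by dominated convergence I then insert the Fourier multiplier identity
\[
\int e^{i(x-y)\xi}g(x)\,dx=\frac{1}{P_{l'}(\xi)}\int e^{i(x-y)\xi}P_{l'}(D_x)g(x)\,dx,
\]
choosing $P_{l'}$ (resp.\ $P_{l'_p}$) so that $|P_{l'}(\xi)|\geq c'e^{M(m'|\xi|)}$ with $m'$ large enough to dominate the exponential $\xi$-growth of $a$ (resp.\ an analogous choice of $(r_p)$). This renders the triple integral absolutely convergent uniformly in $\delta$, so DCT allows passing to $\delta=0$ and kills the terms in which derivatives have fallen on $\chi(\delta\xi)$.

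After the limit I expand $P_{l'}(D_x)$ by Leibniz on the product $\phi(x,y)\cdot P_l(D_\xi)a(x,y,\xi)/P_l(y-x)$, then integrate the $D_x$-derivatives on $\phi$ back by parts onto the remaining factors (this only produces extra $\xi^{\alpha''}/P_{l'}(\xi)$, which remains absolutely integrable). The outcome is a representation $\langle K,\phi\rangle=\int\!\!\int\phi(x,y)F(x,y)\,dxdy$ in which $F$ is an explicit multi-indexed sum of absolutely convergent $\xi$-integrals, hence smooth on $\Omega_r$; this identifies $K|_{\Omega_r}$ with $F$. Derivatives $D_x^\beta D_y^\gamma K$ are handled by differentiating under the integral; the resulting $\xi^\beta$ and $(-\xi)^\gamma$ are absorbed by replacing $P_{l'}$ with a larger ultrapolynomial. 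Combining $|1/P_l(y-x)|\leq Ce^{-M(h|(x,y)|)}$ from paragraph~1 with $\Pi$-norm estimates of the same type as in the proof of Theorem \ref{npr} (using Lemma \ref{nn1} in the Roumieu case to make the weights uniform) yields (\ref{75}) on $\overline{\Omega_r}$, by applying the argument with any $r''<r$.

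For the *-regularizing assertion, if $a(x,y,\xi)=0$ on $(\RR^{2d}\backslash\Omega_r)\times\RR^d$ then for every $\phi\in\DD^*(\RR^{2d}\backslash\overline{\Omega_r})$ the defining regularized integral vanishes identically, so $K=0$ as a distribution on $\RR^{2d}\backslash\overline{\Omega_r}$. Applying the first part of the proposition with $r$ replaced by $r/2$, $K|_{\Omega_{r/2}}$ is $\mathcal{C}^\infty$ and satisfies (\ref{75}). Since $\Omega_{r/2}\supset\overline{\Omega_r}$ and $\RR^{2d}\backslash\overline{\Omega_r}$ cover $\RR^{2d}$ with $K\equiv 0$ on the overlap (distributional uniqueness forces the smooth $F$ to vanish there), $K$ is globally smooth; the bound on $\Omega_{r/2}$ together with the vanishing outside $\overline{\Omega_r}$ yields $K\in\SSS^*(\RR^{2d})$. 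The main technical obstacle throughout is the careful combinatorial management of the Leibniz sums produced by $P_{l'}(D_x)$ applied to $\phi(x,y)P_l(D_\xi)a(x,y,\xi)/P_l(y-x)$, together with the uniform control of the auxiliary sequences in the Roumieu case; this parallels, and makes essential use of, the estimates in the proof of Theorem \ref{npr}.
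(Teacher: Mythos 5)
Your geometric reductions (the inequality $|(x,y)|\le(1+2/r)|x-y|$ on $\Omega_r$, and using $1/P_l(y-x)$ to produce the $e^{-M(h|(x,y)|)}$ decay) are exactly the ones the paper uses, and the final localization argument for the *-regularizing statement is fine in spirit. The gap is in the central step: you never actually obtain an absolutely convergent pointwise formula for $K$ on $\Omega_r$. The factor $1/P_{l'}(\xi)$ that is supposed to give $\xi$-integrability is created by integrating $P_{l'}(D_x)$ by parts against the test function $\phi$; the moment you ``integrate the $D_x$-derivatives on $\phi$ back by parts onto the remaining factors'' to read off a density $F(x,y)$, those derivatives land back on $e^{i(x-y)\xi}$ and reconstitute the powers $\xi^{\alpha}$ whose sum against the coefficients $c_\alpha$ is precisely $P_{l'}(\xi)$ — the leading term of your $F$ is again $\int e^{i(x-y)\xi}\frac{1}{P_l(y-x)}P_l(D_\xi)a(x,y,\xi)\,d\xi$, whose integrand is still of size $e^{M(m|\xi|)}$ in $\xi$ and is not integrable. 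The parenthetical claim that one only picks up ``extra $\xi^{\alpha''}/P_{l'}(\xi)$, which remains absolutely integrable'' fails after summation over the infinite series defining $P_{l'}$. Note also that integrating by parts in $\xi$ via $P_l(D_\xi)$ cannot repair this: each $\xi$-derivative of $a$ gains only $\langle x-y\rangle^{\rho}/\langle(x,y,\xi)\rangle^{\rho}$, and the $\alpha=0$ term of $P_l(D_\xi)a$ is $a$ itself, so no decay in $\xi$ is ever produced this way.

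This is exactly the difficulty the paper's proof is built around, and it is why Lemmas \ref{69} and \ref{70} exist. The paper decomposes $a=\sum_n a\psi_n$ with the partition $\psi_n$ of Lemma \ref{70} supported where $\langle\xi\rangle\sim Rm_n$, so each $K_n(x,y)=\frac{1}{(2\pi)^d}\int e^{i(x-y)\xi}a\psi_n\,d\xi$ is a compactly supported, hence trivially convergent, integral; the whole problem becomes summability of $\sum_n D^\beta_xD^\gamma_yK_n$. That summability is obtained by integrating by parts $n$ times in the single variable $\xi_q$ with $|x_q-y_q|$ maximal, which yields a factor roughly $(C_r/R^{\rho})^n$, while the dangerous growth $e^{M(3mRm_{n+1})}$ of the symbol on $\supp\psi_n$ is tamed by Lemma \ref{69} ($M(mm_n)\le Cn\ln H+\ln c_0$) into a merely geometric factor $H^{cn}$, beaten by choosing $R$ large. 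Some substitute for this mechanism (or another way to convert the $e^{M(m|\xi|)}$ growth of $a$ into something summable) is indispensable; without it the representation of $K|_{\Omega_r}$ as a smooth function, and hence the estimate (\ref{75}), is not established.
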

\begin{proof} Let $\psi_n\in \DD^*\left(\RR^d\right)$ be as in lemma \ref{70}, where $R$ will be chosen later. Then, note that the sum $\ds\sum_{n=0}^{\infty}1_y\otimes\psi_n(\xi)$ converges to $1_{y,\xi}$ in $\EE^*\left(\RR^{2d}_{y,\xi}\right)$ (with $1_{y}$ we denote the function of variable $y$ that is identically equal to $1$, similarly $1_{y,\xi}$ is the function of variables $(y,\xi)$ that is identically equal to $1$). Because $a(x,y,\xi)$ is an element of $\EE^*\left(\RR^{2d}_{y,\xi}\right)$, for every fixed $x$, we get
\beqs
a(x,y,\xi)=a(x,y,\xi)\sum_{n=0}^{\infty}\psi_n(\xi)=\sum_{n=0}^{\infty}\left(\psi_n(\xi)a(x,y,\xi)\right),
\eeqs
in $\EE^*\left(\RR^{2d}_{y,\xi}\right)$. Let $u\in\SSS^*\left(\RR^d\right)$. Because $1/P_l(y-x)$ and $1/P_l(\xi)$, resp. $1/P_{l_p}(y-x)$ and $1/P_{l_p}(\xi)$ are elements of $\EE^*\left(\RR^{2d}_{y,\xi}\right)$, for $*=(M_p)$, resp. $*=\{M_p\}$, for fixed $x$, we get\\
$\ds\frac{1}{P_l(\xi)}P_l(D_y)\left(\frac{1}{P_l(y-x)}P_l(D_\xi)\left(a(x,y,\xi)u(y)\right)\right)$
\beqs
=\sum_{n=0}^{\infty}\frac{1}{P_l(\xi)}P_l(D_y)\left(\frac{1}{P_l(y-x)}P_l(D_\xi)
\left(a(x,y,\xi)\psi_n(\xi)u(y)\right)\right),
\eeqs
in the $(M_p)$ case and with $P_{l_p}$ in place of $P_l$ in the $\{M_p\}$ case, in $\EE^*\left(\RR^{2d}_{y,\xi}\right)$. If we choose $l$ small enough such that $\ds \left|P_l(\xi)\right|\geq c_1 e^{M(r|\xi|)}\geq c'_1 e^{2M(r'|\xi|)}$, where $r'>0$ is such that $\ds\int_{\RR^d}e^{M(m|\xi|)}e^{-M(r'|\xi|)}d\xi<\infty$, by the properties of $\psi_n$ similarly as in the proof of lemma \ref{45}, we obtain
\beqs
\left|\frac{1}{P_l(\xi)}P_l(D_y)\left(\frac{1}{P_l(y-x)}P_l(D_\xi)
\left(a(x,y,\xi)\psi_n(\xi)u(y)\right)\right)\right|\leq C
\frac{e^{M(m|x|)}e^{M(m|\xi|)}}{e^{M(r'|\xi|)}e^{M(r'Rm_n)}}\cdot \frac{e^{M(m|y|)}}{e^{M(s|y|)}}
\eeqs
in the $(M_p)$ case, where $m$ is such that $a\in\Pi_{A_p,B_p,\rho}^{(M_p),\infty}\left(\RR^{3d};m\right)$ and $s$ is from the $\SSS^{(M_p)}$ - seminorms of $u$. Respectively, if we choose $(l_p)\in\mathfrak{R}$ small enough such that $\ds \left|P_{l_p}(\xi)\right|\geq c'' e^{N_{r_p}(|\xi|)}\geq c''_1e^{2N_{r'_p}(|\xi|)}$, where $(r'_p)\in\mathfrak{R}$ is such that $\ds \int_{\RR^d}e^{N_{k_p}(|\xi|)}e^{-N_{r'_p}(|\xi|)}d\xi<\infty$, we get
\beqs
\left|\frac{1}{P_{l_p}(\xi)}P_{l_p}(D_y)\left(\frac{1}{P_{l_p}(y-x)}P_{l_p}(D_\xi)
\left(a(x,y,\xi)\psi_n(\xi)u(y)\right)\right)\right|
\leq C\frac{e^{N_{k_p}(|x|)}e^{N_{k_p}(|\xi|)}}{e^{N_{r'_p}(|\xi|)}e^{N_{r'_p}(Rm_n)}}\cdot \frac{e^{N_{k_p}(|y|)}}{e^{M(s|y|)}}
\eeqs
in the $\{M_p\}$ case, where $(k_p)$ is such that (\ref{zon}) holds for $a$ and $s$ depends on $u$. Hence, by dominated convergence,
\beqs
Au(x)&=&\frac{1}{(2\pi)^d}\sum_{n=0}^{\infty}
\int_{\RR^{2d}}e^{i(x-y)\xi}\frac{1}{P_l(\xi)}P_l(D_y)\left(\frac{1}{P_l(y-x)}P_l(D_\xi)
\left(a(x,y,\xi)\psi_n(\xi)u(y)\right)\right)dyd\xi\\
&=&\frac{1}{(2\pi)^d}\sum_{n=0}^{\infty}\int_{\RR^{2d}}e^{i(x-y)\xi}a(x,y,\xi)\psi_n(\xi)u(y)dyd\xi,
\eeqs
in the $(M_p)$ case, resp. the same but with $P_{l_p}$ in place of $P_l$ in the $\{M_p\}$ case and the convergence is uniform for $x$ in compact subsets of $\RR^d$ and in $\SSS'^*\left(\RR^d\right)$. For simpler notation, put $a_n(x,y,\xi)=a(x,y,\xi)\psi_n(\xi)$ and $A_n$ for the associated operator to $a_n$. Then, we get $\ds Au(x)=\sum_{n=0}^{\infty}A_nu(x)$, where the convergence is uniform for $x$ in compact subsets of $\RR^d$ and in $\SSS'^*\left(\RR^d\right)$. So $\ds\sum_{k=0}^nA_k\longrightarrow A$, when $n\longrightarrow\infty$, in $\mathcal{L}_{\sigma}\left(\SSS^*\left(\RR^{d}\right), \SSS'^*\left(\RR^d\right)\right)$. $\SSS^*$ is barreled, so, by the Banach - Steinhaus theorem (see \cite{Schaefer}, theorem 4.6), it follows that $\ds\sum_{k=0}^{n}A_k\longrightarrow A$, when $n\longrightarrow\infty$, in the topology of precompact convergence. But $\SSS^*$ is Montel space, so the convergence holds in $\mathcal{L}_b\left(\SSS^*\left(\RR^d\right),\SSS'^*\left(\RR^d\right)\right)$ (the topology of bounded convergence). Hence, if we denote by $K(x,y)$ the kernel of $A$ and by $K_n$ the kernel of $A_n$, by proposition \ref{ktr}, we get $\ds K=\sum_{n=0}^{\infty}K_n$, where the convergence holds in $\SSS'^*\left(\RR^{2d}\right)$. Now, observe that
\beqs
K_n(x,y)=\frac{1}{(2\pi)^d}\int_{\RR^d}e^{i(x-y)\xi}a(x,y,\xi)\psi_n(\xi)d\xi
\eeqs
and $K_n$ is a $\mathcal{C}^{\infty}$ function. Take $R$ such that $Rm_1\geq 1$. Later on we will impose more conditions on $R$. Let $r\in(0,1)$ be fixed. First, we will observe the $(M_p)$ case. There exists $m>0$ such that $a\in\Pi_{A_p,B_p,\rho}^{M_p,\infty}\left(\RR^{3d};h,m\right)$, for all $h>0$. Let $m'$ be arbitrary but fixed positive real number. We want to prove (\ref{75}) for this $m'$. Obviously, without losing generality, we can assume that $m'\geq 1$. Let $(x,y)\in \Omega_r$ be arbitrary but fixed. Let $q\in\{1,...,d\}$ be such that $|x_q-y_q|\geq |x_j-y_j|$, for all $j\in\{1,...,d\}$. Then $\ds |x_q-y_q|>\frac{r}{d}\langle x\rangle$. We calculate\\
$\ds D^{\beta}_xD^{\gamma}_y K_n(x,y)$
\beqs
&=&\frac{1}{(2\pi)^d}\sum_{\substack{\beta'+\beta''=\beta\\ \gamma'+\gamma''=\gamma}}\sum_{k=0}^n\sum_{\substack{k'+k''=k\\k''\leq \beta''_q+\gamma''_q}}
{\beta\choose\beta'}{\gamma\choose\gamma'}{n\choose k}{k\choose k'}\frac{(\beta''+\gamma'')!}{(\beta''+\gamma''-e_q k'')!}
\frac{(-1)^{|\gamma''|+n}}{(x_q-y_q)^ni^{k''}}\\
&{}&\cdot
\int_{\RR^d}e^{i(x-y)\xi}\frac{1}{P_l(y-x)}P_l(D_{\xi})\left(\xi^{\beta''+\gamma''-e_q k''}D^{k'}_{\xi_q}D^{\beta'}_xD^{\gamma'}_y a(x,y,\xi)D^{n-k}_{\xi_q}\psi_n(\xi)\right)d\xi.
\eeqs
On $\Omega_r$ we have the following inequality
\beq\label{85}
|(x,y)|\leq |x|+|y|\leq\langle x\rangle+|x-y|+|x|\leq2\langle x\rangle+|x-y|\leq\left(\frac{2}{r}+1\right)|x-y|.
\eeq
Hence, by using proposition 3.6 of \cite{Komatsu1}, we can find $m''>0$ such that $e^{M(m''|x-y|)}\geq c''e^{M(m|x|)}e^{M(m|y|)}e^{M(m'|(x,y)|)}$ on $\Omega_r$. Take $l'\geq m''$. Then we have
\beq\label{90}
e^{M(l'|\xi|)}\geq c'''e^{M(m''|\xi|)}.
\eeq
By proposition \ref{orn}, we can find small enough $l>0$ such that  $\ds |P_l(\xi)|\geq c''e^{M(l'|\xi|)}$. On the other, hand if we represent $P_l(D)$ as $\sum_{\alpha}c_{\alpha}D^{\alpha}$, then there exist $C'_1>0$ and $L_0>0$ such that $|c_{\alpha}|\leq C'_1L_0^{|\alpha|}/M_{\alpha}$. We will estimate the part in the integral for $n\in\ZZ_+$ as follows\\
$\ds \left|\frac{1}{P_l(y-x)}P_l(D_{\xi})\left(\xi^{\beta''+\gamma''-e_q k''}D^{k'}_{\xi_q}D^{\beta'}_xD^{\gamma'}_y a(x,y,\xi)D^{n-k}_{\xi_q}\psi_n(\xi)\right)\right|$
\beqs
&\leq&\frac{1}{\left|P_l(y-x)\right|}\sum_{\alpha}\left|c_{\alpha}\right|
\sum_{\alpha'\leq\alpha}\sum_{\substack{\alpha''+\alpha'''=\alpha'\\ \alpha'''\leq\beta''+\gamma''-e_q k''}}
{\alpha\choose\alpha'}{\alpha'\choose\alpha''}
\frac{(\beta''+\gamma''-e_q k'')!}{(\beta''+\gamma''-e_q k''-\alpha''')!}\\
&{}&\cdot|\xi|^{|\beta''+\gamma''-e_q k''|-|\alpha'''|}
\left|D^{\alpha''+e_q k'}_{\xi}D^{\beta'}_xD^{\gamma'}_y a(x,y,\xi)\right|
\left|D^{\alpha-\alpha'+e_q(n-k)}_{\xi}\psi_n (\xi)\right|\\
&\leq&C_1e^{-M(l'|x-y|)}\sum_{\alpha}\left|c_{\alpha}\right|
\sum_{\alpha'\leq\alpha}\sum_{\substack{\alpha''+\alpha'''=\alpha'\\ \alpha'''\leq\beta''+\gamma''-e_q k''}}{\alpha\choose\alpha'}{\alpha'\choose\alpha''}
\frac{(\beta''+\gamma''-e_q k'')!}{(\beta''+\gamma''-e_q k''-\alpha''')!}\\
&{}&\cdot|\xi|^{|\beta''+\gamma''-e_q k''|-|\alpha'''|}\cdot
\frac{h_1^{|\alpha|-|\alpha'|+n-k}M_{\alpha-\alpha'+n-k}}{\left(Rm_n\right)^{|\alpha|-|\alpha'|+n-k}}\\
&{}&\cdot\frac{h^{|\alpha''|+|\beta'|+|\gamma'|+k'}
\langle x-y\rangle^{\rho|\alpha''|+\rho k'+\rho|\beta'|+\rho|\gamma'|}A_{\alpha''+k'}B_{\beta'+\gamma'}
e^{M(m|\xi|)}e^{M(m|x|)}e^{M(m|y|)}}{\langle (x,y,\xi)\rangle^{\rho|\alpha''|+\rho k'+\rho|\beta'|+\rho|\gamma'|}},
\eeqs
on the support of $\psi_n$. Note that $\langle x-y\rangle\leq 2(1+|x|^2+|y|^2)^{1/2}\leq 2\langle(x,y,\xi)\rangle$. Hence
\beqs
\langle x-y\rangle^{\rho|\alpha''|+\rho|\beta'|+\rho|\gamma'|}\leq 2^{\rho|\alpha''|+\rho|\beta'|+\rho|\gamma'|}\langle (x,y,\xi)\rangle^{\rho|\alpha''|+\rho|\beta'|+\rho|\gamma'|}.
\eeqs
Also, $(\beta''+\gamma''-e_q k'')!\leq 2^{|\beta''+\gamma''-e_q k''|}(\beta''+\gamma''-e_q k''-\alpha''')! \alpha'''!$. Moreover $B_{\beta'+\gamma'}\leq c'_0 L^{|\beta'|+|\gamma'|}M_{\beta'+\gamma'}$ and $A_{\alpha''+k'}\leq c'_0 L^{|\alpha''|+k'}M_{\alpha''+k'}^{\rho}$. Let $T_n=\left\{\xi\in\RR^d|2Rm_n\leq\langle \xi\rangle\leq3Rm_{n+1}\right\}$. By construction, $\supp \psi_n\subseteq T_n$. Note that, on $T_n$
\beqs
\frac{|\xi|^{|\beta''+\gamma''-e_q k''|-|\alpha'''|}}{\langle(x,y,\xi)\rangle^{\rho k'}}\leq \frac{\langle \xi\rangle^{|\beta''+\gamma''-e_q k''|-|\alpha'''|}}{\langle \xi\rangle^{\rho k'}}
\leq\frac{\left(3Rm_{n+1}\right)^{|\beta''|+|\gamma''|}}{\left(3Rm_{n+1}\right)^{|\alpha'''|+k''}
\left(2Rm_n\right)^{\rho k'}}.
\eeqs
Because $m_n$ is monotonically increasing, $m_n^{n-k}\geq m_n\cdot m_{n-1}\cdot...\cdot m_{k+1}=M_n/M_{k}\geq M_{n-k}$ and similarly, $m_n^{k'}\geq M_{k'}$ and $m_n^{k''}\geq M_{k''}$. Moreover, there exists $\tilde{c}>0$ such that $M_p^{\rho}\leq \tilde{c}M_p$. We use this to estimate the above integral. By Fatou's lemma we have $\int_{\RR^d}\left|\sum ...\right|d\xi\leq \sum\int_{\RR^d}\left| ...\right|d\xi$. Considering the parts that are depended on $\alpha$, $\alpha'$, $\alpha''$ and $\alpha'''$, after using the above inequalities, one obtains\\
$\ds e^{M(3mRm_{n+1})}\sum_{\alpha}
\sum_{\alpha'\leq\alpha}\sum_{\substack{\alpha''+\alpha'''=\alpha'\\ \alpha'''\leq\beta''+\gamma''-e_q k''}}{\alpha\choose\alpha'}{\alpha'\choose\alpha''}\frac{L_0^{|\alpha|-|\alpha'''|}L_0^{|\alpha'''|}}{M_{\alpha}}$\\
${}$\hspace{140 pt}$\ds\cdot\frac{\alpha'''!(2h)^{|\alpha''|}
L^{|\alpha''|+k'}M_{\alpha''+k'}^{\rho}
h_1^{|\alpha|-|\alpha'|}M_{\alpha-\alpha'+n-k}}{\left(3Rm_{n+1}\right)^{|\alpha'''|+k''}
\left(2Rm_n\right)^{\rho k'}\left(Rm_n\right)^{|\alpha|-|\alpha'|+n-k}}\cdot|T_n|$
\beqs
&\leq& C_2e^{M(3mRm_{n+1})}\sum_{\alpha}
\sum_{\alpha'\leq\alpha}\sum_{\substack{\alpha''+\alpha'''=\alpha'\\ \alpha'''\leq\beta''+\gamma''-e_q k''}}{\alpha\choose\alpha'}{\alpha'\choose\alpha''}\frac{L_0^{|\alpha|-|\alpha'''|}}{M_{\alpha}}\\
&{}&\hspace{110 pt}\cdot
\frac{(2h)^{|\alpha''|}L^{|\alpha|+n}h_1^{|\alpha|-|\alpha'|}H^{|\alpha|+n}
M_{\alpha'''}M_{\alpha''}M_{k'}^{\rho}M_{\alpha-\alpha'}M_{n-k}}
{\left(Rm_n\right)^{|\alpha'''|+ k''}
\left(Rm_n\right)^{\rho k'}\left(Rm_n\right)^{|\alpha|-|\alpha'|+n-k}}\cdot|T_n|\\
&\leq&\frac{C_2|T_n|e^{M(3mRm_{n+1})}}{R^{\rho n}}\sum_{\alpha}
\sum_{\alpha'\leq\alpha}\sum_{\substack{\alpha''+\alpha'''=\alpha'\\ \alpha'''\leq\beta''+\gamma''-e_q k''}}
{\alpha\choose\alpha'}{\alpha'\choose\alpha''}\frac{(HL)^{|\alpha|+n}(2L_0h)^{|\alpha''|}
(L_0h_1)^{|\alpha|-|\alpha'|}}{(RM_1)^{|\alpha|-|\alpha''|}m_n^{k''}}\\
&\leq&\frac{C_2|T_n|(HL)^n e^{M(3mRm_{n+1})}}{R^{\rho n}M_{k''}}\sum_{\alpha}\frac{(HL)^{|\alpha|}}
{(RM_1)^{|\alpha|}}\sum_{\alpha'\leq\alpha}{\alpha\choose\alpha'}
\left(1+2L_0hRM_1\right)^{|\alpha'|}(L_0h_1)^{|\alpha|-|\alpha'|}\\
&=&\frac{C_2|T_n|(HL)^n e^{M(3mRm_{n+1})}}{R^{\rho n}M_{k''}}\sum_{\alpha}
\left(\frac{HL}{RM_1}+2hHLL_0+\frac{h_1HLL_0}{RM_1}\right)^{|\alpha|}.
\eeqs
Take $R$ such that $\ds \frac{HL}{RM_1}+\frac{1}{8}+\frac{1}{RM_1}\leq \frac{1}{2}$ and take $h$ and $h_1$ small enough such that $2hHLL_0\leq 1/8$ and $h_1HLL_0\leq 1$. Then, the sum will be uniformly convergent for all $h$ and $h_1$ for which the previous inequalities hold. The choice of $R$ depends only on $A_p$, $B_p$ and $M_p$ (and not on $L_0$, hence not on the operator $P_l$). Also, the choice of $h$ and $h_1$ depend on $A_p$, $B_p$, $M_p$ and the operator $P_l$, but not on $R$. Before we continue, note that, from the way we choose $q$, we have the following inequality
\beqs
1+|x-y|^2\leq \langle x\rangle^2+d|x_q-y_q|^2\leq \frac{d^2}{r^2}|x_q-y_q|^2+d|x_q-y_q|^2\leq \left(\frac{d}{r}+d\right)^2|x_q-y_q|^2.
\eeqs
For shorter notation, put $\ds r_1=\frac{d}{r}+d$. So, we obtain $\langle x-y\rangle \leq r_1|x_q-y_q|$. Now, for the estimate of $\left|D^{\beta}_x D^{\gamma}_y K_n(x,y)\right|$, by using (\ref{90}), we obtain\\
$\ds \left|D^{\beta}_xD^{\gamma}_y K_n(x,y)\right|$
\beqs
&\leq& C_3\sum_{\substack{\beta'+\beta''=\beta\\ \gamma'+\gamma''=\gamma}}\sum_{k=0}^n\sum_{\substack{k'+k''=k\\k''\leq \beta''_q+\gamma''_q}}
{\beta\choose\beta'}{\gamma\choose\gamma'}{n\choose k}{k\choose k'}
\frac{(\beta''+\gamma'')!}{(\beta''+\gamma''-e_q k'')!}\frac{\langle x-y\rangle^{\rho k'}}{|x_q-y_q|^n}\\
&{}&\cdot 2^{|\beta''+\gamma''-e_q k''|}\left(3Rm_{n+1}\right)^{|\beta''|+|\gamma''|}h^{|\beta'|+|\gamma'|+k'}2^{|\beta'|+|\gamma'|}L^{|\beta'|+|\gamma'|}
M_{\beta'+\gamma'}h_1^{n-k}\\
&{}&\hspace{110 pt}\cdot e^{-M(l'|y-x|)}e^{M(m|x|)}e^{M(m|y|)}
\frac{|T_n|(HL)^ne^{M(3mRm_{n+1})}}{R^{\rho n}M_{k''}}\\
&\leq&C_3r_1^n\sum_{\substack{\beta'+\beta''=\beta\\ \gamma'+\gamma''=\gamma}}\sum_{k=0}^n\sum_{k'+k''=k}
{\beta\choose\beta'}{\gamma\choose\gamma'}{n\choose k}{k\choose k'}
\frac{4^{|\beta''|+|\gamma''|}k''!}{M_{k''}}\cdot\frac{h_1^{k''}}{h_1^{k''}}\\
&{}&\cdot \frac{1}{2^{k''}(m'R)^{|\beta''|+|\gamma''|}}
\frac{\left(3m'R^2m_{n+1}\right)^{|\beta''|+|\gamma''|}}
{M_{\beta''+\gamma''}}
h^{|\beta'|+|\gamma'|+k'}(2L)^{|\beta'|+|\gamma'|}h_1^{n-k}M_{\beta+\gamma}\\
&{}&\hspace{110 pt}\cdot e^{-M(m'|(x,y)|)}\frac{|T_n|(HL)^ne^{M(3mRm_{n+1})}}{R^{\rho n}}.
\eeqs
Note that $\ds\frac{\left(3m'R^2m_{n+1}\right)^{|\beta''|+|\gamma''|}}
{M_{\beta''+\gamma''}}\leq e^{M(3m'R^2m_{n+1})}$. Also, by using $(M.2)$, we obtain
\beqs
|T_n|=\omega_d\left(\left(9R^2m_{n+1}^2-1\right)^{d/2}-\left(4R^2m_n^2-1\right)^{d/2}\right)\leq \omega_d(3Rm_{n+1})^d\leq \omega_d (3c_0RM_1)^dH^{(n+1)d},
\eeqs
where $\omega_d$ is the volume of the $d$-dimensional unit ball. By proposition 3.6 of \cite{Komatsu1}
\beqs
e^{M(3mRm_{n+1})}e^{M(3m'R^2m_{n+1})} \leq c_0e^{M(3Hm'R^2m_{n+1})},
\eeqs
where we take $R\geq m$ (which depends only on $a$). We obtain\\
$\ds \left|D^{\beta}_x D^{\gamma}_y K_n(x,y)\right|$
\beqs
\leq C_4(3c_0RM_1H)^d\frac{M_{\beta+\gamma}(HL)^n H^{nd}e^{M(3Hm'R^2m_{n+1})}r_1^n}{e^{M(m'|(x,y)|)}R^{\rho n}}
\left(\frac{4}{m'R}+2hL\right)^{|\beta|+|\gamma|}\left(h_1+h+\frac{h_1}{2}\right)^n.
\eeqs
By lemma \ref{69} we have
\beqs
e^{M(3Hm'R^2m_{n+1})}\leq c_0 H^{2(3c_0Hm'R^2+2)(n+1)}=c_0 H^{2(3c_0Hm'R^2+2)}\left(H^{2(3c_0Hm'R^2+2)}\right)^n.
\eeqs
Take $\ds R^{\rho}> H^{d+1}Lr_1$ and $R\geq 8$. For the fixed $m'$ in the beginning of the proof, choose $h$ small enough such that $2hL\leq 1/(2m')$. Then $\ds \frac{4}{m'R}+2hL\leq\frac{1}{m'}$. For the chosen $R$, choose $h$ and $h_1$ smaller then the chosen before such that $\ds H^{2(3c_0Hm'R^2+2)}\left(h_1+h+\frac{h_1}{2}\right)\leq 1$. (Note that the choice of $R$ and hence the choice of $\psi_n$, $n\in\NN$, depends only on $A_p$, $B_p$, $M_p$ and $a$, but not on the operator $P_l$ or $m'$.) Then $\ds\sum_{n=1}^{\infty} \left|D^{\beta}_x D^{\gamma}_y K_n(x,y)\right|$ will converge and we have the following estimate
\beqs
\sum_{n=1}^{\infty}\left|D^{\beta}_x D^{\gamma}_y K_n(x,y)\right|\leq C\frac{M_{\beta+\gamma}}{e^{M(m'|(x,y)|)}m'^{|\beta|+|\gamma|}}.
\eeqs
For $\left|D^{\beta}_x D^{\gamma}_y K_0(x,y)\right|$, by similar procedure, we obtain the same estimate. Hence (\ref{75}) holds and the proof for the $(M_p)$ case is complete.\\
\indent The $\{M_p\}$ case. We will prove that for every $(t_p),(t'_p)\in\mathfrak{R}$,
\beq\label{pon}
\sup_{\beta,\gamma\in\NN^d}\sup_{(x,y)\in\overline{\Omega_r}}\frac{\left|D^{\beta}_xD^{\gamma}_y K(x,y)\right|e^{N_{t_p}(|(x,y)|)}}{T'_{\beta+\gamma}M_{\beta+\gamma}}<\infty,
\eeq
for every fixed $0<r<1$, where $\ds T'_{\beta+\gamma}=\prod_{j=1}^{|\beta|+|\gamma|}t'_j$ and $T'_0=1$. From this, the claim in the lemma follows. To prove this, fix $0<r<1$ and take $\theta\in\EE^{\{M_p\}}\left(\RR^{2d}\right)$ as in lemma \ref{pll}. Define $\tilde{K}=K\theta$. Then $\tilde{K}$ is $\mathcal{C}^{\infty}$ function and for every $(t_p),(t'_p)\in\mathfrak{R}$, $\ds \sup_{\beta,\gamma\in\NN^d}\sup_{(x,y)\in\RR^{2d}}\frac{\left|D^{\beta}_xD^{\gamma}_y \tilde{K}(x,y)\right|e^{N_{t_p}(|(x,y)|)}}{T'_{\beta+\gamma}M_{\beta+\gamma}}<\infty$. Hence $\tilde{K}\in \SSS^{\{M_p\}}\left(\RR^{2d}\right)$. So, there exists $h>0$ such that $\ds \sup_{\beta,\gamma\in\NN^d}\sup_{(x,y)\in\RR^{2d}}\frac{h^{|\beta|+|\gamma|}\left|D^{\beta}_xD^{\gamma}_y \tilde{K}(x,y)\right|e^{M(h|(x,y)|)}}{M_{\beta+\gamma}}<\infty$. But, $\tilde{K}(x,y)=K(x,y)$ on $\Omega_{3r/4}$ and the desired estimate follows. Now, to prove (\ref{pon}). Let $a\in\Pi_{A_p,B_p}^{\{M_p\},\infty}\left(\RR^{3d}\right)$. Then there exists $h>0$ such that $a\in\Pi_{A_p,B_p,\rho}^{M_p,\infty}\left(\RR^{3d};h,m\right)$, for all $m>0$. By lemma \ref{nn1}, there exist $(k_p)\in\mathfrak{R}$ and $c'_0>0$ such that
\beqs
\left|D^{\alpha}_{\xi}D^{\beta}_xD^{\gamma}_y a(x,y,\xi)\right|\leq c'_0\frac{h^{|\alpha|+|\beta|+|\gamma|}\langle x-y\rangle^{\rho|\alpha|+\rho|\beta|+\rho|\gamma|}A_{\alpha}B_{\beta+\gamma}
e^{N_{k_p}(|\xi|)}e^{N_{k_p}(|x|)}e^{N_{k_p}(|y|)}}{\langle (x,y,\xi)\rangle^{\rho|\alpha|+\rho|\beta|+\rho|\gamma|}},
\eeqs
for all $\alpha,\beta,\gamma\in\NN^d$ and $(x,y,\xi)\in\RR^{3d}$. Let $(t_p),(t'_p)\in\mathfrak{R}$ be fixed. For $(l_p)\in\mathfrak{R}$ consider $P_{l_p}(\xi)$. By proposition \ref{orn}, we can choose $P_{l_p}(\xi)$ such that, $\ds \left|P_{l_p}(\xi)\right|\geq c'' e^{N_{l'_p}(|\xi|)}$ where $(l'_p)\in\mathfrak{R}$ is such that $e^{N_{l'_p}(|x-y|)}\geq c_1e^{N_{k_p}(|x|)}e^{N_{k_p}(|y|)}e^{N_{t_p}(|(x,y)|)}$ on $\Omega_r$. This is possible because of (\ref{85}). On the other hand, if we represent $\ds P_{l_p}(\xi)=\sum_{\alpha}c_{\alpha}\xi^{\alpha}$ then for every $L'>0$ there exists $C'>0$ such that $\left|c_{\alpha}\right|\leq C'L'^{|\alpha|}/M_{\alpha}$. By the same calculations, one obtains the same form for $D^{\beta}_xD^{\gamma}_y K_n(x,y)$ as in the $(M_p)$ case, but with $P_{l_p}$ in place of $P_l$. The prove continues in the same way as above. We will point out only the notable differences. The first difference is in the estimate of the part that is depended on $\alpha$, $\alpha'$, $\alpha''$ and $\alpha'''$ (for $n\in\ZZ_+$) and the integral over $\RR^d_{\xi}$, where in the $\{M_p\}$ case one obtains the estimate
\beqs
\frac{C_2|T_n|(HL)^n e^{N_{k_p}(3Rm_{n+1})}}{R^{\rho n}M_{k''}}\sum_{\alpha}
\left(\frac{HL}{RM_1}+2hHLL'+\frac{h_1HLL'}{RM_1}\right)^{|\alpha|}.
\eeqs
The convergence of this sum follows from the fact that we can take $R$ arbitrary large and $L'$ arbitrary small. Moving on to the estimate of $\left|D^{\beta}_xD^{\gamma}_y K_n(x,y)\right|$, in similar fashion, one obtains the following\\
$\ds\left|D^{\beta}_xD^{\gamma}_y K_n(x,y)\right|$
\beqs
&\leq&C_3r_1^n\sum_{\substack{\beta'+\beta''=\beta\\ \gamma'+\gamma''=\gamma}}\sum_{k=0}^n\sum_{k'+k''=k}
{\beta\choose\beta'}{\gamma\choose\gamma'}{n\choose k}{k\choose k'}
\frac{12^{|\beta''|+|\gamma''|}k''!}{M_{k''}}\frac{R^{|\beta''|+|\gamma''|}}{2^{k''}}\\
&{}&\cdot \frac{m_{n+1}^{|\beta''|+|\gamma''|}}
{M_{\beta''+\gamma''}}
h^{|\beta'|+|\gamma'|+k'}(2L)^{|\beta'|+|\gamma'|}h_1^{n-k}M_{\beta+\gamma}
e^{-N_{t_p}(|(x,y)|)}\frac{|T_n|(HL)^ne^{N_{k_p}(3Rm_{n+1})}}{R^{\rho n}}.
\eeqs
By using the increasingness of $m_p$ and $(M.2)$, we obtain
\beqs
\frac{m_{n+1}^{|\beta''|+|\gamma''|}}{M_{\beta''+\gamma''}}\leq
\frac{m_{n+2}\cdot m_{n+3}\cdot...\cdot m_{n+1+|\beta''|+|\gamma''|}}{M_{\beta''+\gamma''}}=\frac{M_{n+1+\beta''+\gamma''}}{M_{\beta''+\gamma''}M_{n+1}}\leq c_0H^{n+1+|\beta''|+|\gamma''|}.
\eeqs
We get the estimate:
\beqs
\left|D^{\beta}_xD^{\gamma}_y K_n(x,y)\right|\leq C_4
\frac{M_{\beta+\gamma}|T_n|(H^2L)^ne^{N_{k_p}(3Rm_{n+1})}r_1^n}{e^{N_{t_p}(|(x,y)|)}R^{\rho n}}
(12RH+2hL)^{|\beta|+|\gamma|}\left(h_1+h+\frac{1}{2}\right)^n.
\eeqs
By lemma \ref{69} $\ds e^{N_{k_p}(3Rm_{n+1})}\leq cH^{n+1}$, where $c$ depends only on $(k_p)$, $R$ and $M_p$ (does not depend on $n$). Now, if we use the same estimate for $|T_n|$ as in the $(M_p)$ case, if we take large enough $R$, the sum $\ds\sum_{n=1}^{\infty} \left|D^{\beta}_xD^{\gamma}_y K_n(x,y)\right|$ will converge and we obtain
\beqs
\sum_{n=1}^{\infty}\left|D^{\beta}_xD^{\gamma}_y K(x,y)\right|\leq C\frac{M_{\beta+\gamma}}{e^{N_{t_p}(|(x,y)|)}} (12RH+2hL)^{|\beta|+|\gamma|}.
\eeqs
One obtains similar estimates for $\left|D^{\beta}_x D^{\gamma}_y K_0(x,y)\right|$. Hence we obtain (\ref{pon}) and the proof for the $\{M_p\}$ case is complete. It remains to prove the fact that if there exists $r$, $0<r<1$, such that $a(x,y,\xi)=0$ for $(x,y,\xi)\in\left(\RR^{2d}\backslash\Omega_r\right)\times\RR^d$ then $K\in\SSS^*\left(\RR^{2d}\right)$. But this trivially follows from the proved growth condition of $D^{\beta}_x D^{\gamma}_y K(x,y)$ and the fact that for $(x,y)\in\RR^{2d}\backslash\Omega_r$, $K_n(x,y)=0$ for all $n\in\NN$, hence, $K=0$ on $\RR^{2d}\backslash \Omega_r$.
\end{proof}

\section{Symbolic calculus}

Let $\ds\rho_1=\inf\{\rho\in\RR_+|A_p\subset M_p^{\rho}\}$ and $\ds\rho_2=\inf\{\rho\in\RR_+|B_p\subset M_p^{\rho}\}$ and put $\rho_0=\max\{\rho_1,\rho_2\}$. Then $0<\rho_0\leq 1$ and for every $\rho$ such that $\rho_0\leq \rho\leq1$, if the larger infimum can be reached, or, otherwise $\rho_0< \rho\leq1$, $A_p\subset M_p^{\rho}$ and $B_p\subset M_p^{\rho}$. So, for every such $\rho$, there exists $c'_0>0$ and $L>0$ (which depend on $\rho$) such that, $A_p\leq c'_0 L^p M_p^{\rho}$, $B_p\leq c'_0 L^p M_p^{\rho}$. Moreover, because $M_p$ tends to infinity, there exists $\tilde{c}>0$ such that $M_p^{\rho}\leq \tilde{c}M_p$, for all such $\rho$. From now on we suppose that $\rho_0\leq\rho\leq 1$, if the larger infimum can be reached, or otherwise $\rho_0<\rho\leq 1$.\\
\indent For $t>0$, put $Q_t=\left\{(x,\xi)\in\RR^{2d}|\langle x\rangle<t, \langle \xi\rangle<t\right\}$ and $Q_t^c=\RR^{2d}\backslash Q_t$. Denote by $FS_{A_p,B_p,\rho}^{M_p,\infty}\left(\RR^{2d};B,h,m\right)$ the vector space of all formal series $\ds \sum_{j=0}^{\infty}a_j(x,\xi)$ such that $a_j\in \mathcal{C}^{\infty}\left(\mathrm{int\,}Q^c_{Bm_j}\right)$, $D^{\alpha}_{\xi} D^{\beta}_x a_j(x,\xi)$ can be extended to continuous function on $Q^c_{Bm_j}$ for all $\alpha,\beta\in\NN^d$ and
\beqs
\sup_{j\in\NN}\sup_{\alpha,\beta}\sup_{(x,\xi)\in Q_{Bm_j}^c}\frac{\left|D^{\alpha}_{\xi}D^{\beta}_x a_j(x,\xi)\right|
\langle (x,\xi)\rangle^{\rho|\alpha|+\rho|\beta|+2j\rho}e^{-M(m|\xi|)}e^{-M(m|x|)}}
{h^{|\alpha|+|\beta|+2j}A_{\alpha}B_{\beta}A_jB_j}<\infty.
\eeqs
In the above, we use the convention $m_0=0$ and hence $Q^c_{Bm_0}=\RR^{2d}$. It is easy to check that $FS_{A_p,B_p,\rho}^{M_p,\infty}\left(\RR^{2d};B,h,m\right)$ is a Banach space. Define
\beqs
FS_{A_p,B_p,\rho}^{(M_p),\infty}\left(\RR^{2d};B,m\right)=\lim_{\substack{\longleftarrow\\h\rightarrow 0}}
FS_{A_p,B_p,\rho}^{M_p,\infty}\left(\RR^{2d};B,h,m\right),\\
FS_{A_p,B_p,\rho}^{(M_p),\infty}\left(\RR^{2d}\right)=\lim_{\substack{\longrightarrow\\B,m\rightarrow\infty}}
FS_{A_p,B_p,\rho}^{(M_p),\infty}\left(\RR^{2d};B,m\right),\\
FS_{A_p,B_p,\rho}^{\{M_p\},\infty}\left(\RR^{2d};B,h\right)=\lim_{\substack{\longleftarrow\\m\rightarrow 0}}
FS_{A_p,B_p,\rho}^{M_p,\infty}\left(\RR^{2d};B,h\right),\\
FS_{A_p,B_p,\rho}^{\{M_p\},\infty}\left(\RR^{2d}\right)=\lim_{\substack{\longrightarrow\\B,h\rightarrow \infty}}
FS_{A_p,B_p,\rho}^{\{M_p\},\infty}\left(\RR^{2d};B,h\right).
\eeqs
Then, $FS_{A_p,B_p,\rho}^{(M_p),\infty}\left(\RR^{2d};B,m\right)$ and $FS_{A_p,B_p,\rho}^{\{M_p\},\infty}\left(\RR^{2d};B,h\right)$ are $(F)$ - spaces. The inclusions $\ds FS_{A_p,B_p,\rho}^{(M_p),\infty}\left(\RR^{2d};B,m\right)\longrightarrow \prod_{j=0}^{\infty}\EE^{(M_p)}\left(\mathrm{int\,}Q^c_{Bm_j}\right)$ and $\ds FS_{A_p,B_p,\rho}^{\{M_p\},\infty}\left(\RR^{2d};B,h\right)\longrightarrow \prod_{j=0}^{\infty}\EE^{\{M_p\}}\left(\mathrm{int\,}Q^c_{Bm_j}\right)$, $\ds \sum_{j=0}^{\infty}a_j\mapsto (a_0,a_1,a_2,...)$, are continuous, so $FS_{A_p,B_p,\rho}^{(M_p),\infty}\left(\RR^{2d}\right)$ and $FS_{A_p,B_p,\rho}^{\{M_p\},\infty}\left(\RR^{2d}\right)$ are Hausdorff l.c.s. Moreover, as inductive limits of barreled and bornological spaces they are barreled and bornological. Note, also, that the inclusions $\Gamma_{A_p,B_p,\rho}^{*,\infty}\left(\RR^{2d}\right)\longrightarrow FS_{A_p,B_p,\rho}^{*,\infty}\left(\RR^{2d}\right)$, defined as $\ds a\mapsto\sum_{j\in\NN}a_j$, where $a_0=a$ and $a_j=0$, $j\geq 1$, is continuous.

\begin{definition}
Two sums, $\ds\sum_{j\in\NN}a_j,\,\sum_{j\in\NN}b_j\in FS_{A_p,B_p,\rho}^{*,\infty}\left(\RR^{2d}\right)$, are said to be equivalent, in notation $\ds\sum_{j\in\NN}a_j\sim\sum_{j\in\NN}b_j$, if there exist $m>0$ and $B>0$, resp. there exist $h>0$ and $B>0$, such that for every $h>0$, resp. for every $m>0$,
\beqs
\sup_{N\in\ZZ_+}\sup_{\alpha,\beta}\sup_{(x,\xi)\in Q_{Bm_N}^c}\frac{\left|D^{\alpha}_{\xi}D^{\beta}_x \sum_{j<N}\left(a_j(x,\xi)-b_j(x,\xi)\right)\right|
\langle (x,\xi)\rangle^{\rho|\alpha|+\rho|\beta|+2N\rho}}
{h^{|\alpha|+|\beta|+2N}A_{\alpha}B_{\beta}A_NB_N}\cdot\\
\cdot e^{-M(m|\xi|)}e^{-M(m|x|)}<\infty.
\eeqs
\end{definition}

From now on, we assume that $A_p$ and $B_p$ satisfy $(M.2)$. Without losing generality we can assume that the constants $c_0$ and $H$ from the condition $(M.2)$ for $A_p$ and $B_p$ are the same as the corresponding constants for $M_p$.

\begin{theorem}
Let $a\in\Gamma_{A_p,B_p,\rho}^{*,\infty}\left(\RR^{2d}\right)$ be such that $a\sim 0$. Then, for every $\tau\in\RR$, $\Op_{\tau}(a)$ is *-regularizing.
\end{theorem}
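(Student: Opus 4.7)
The plan is to reduce the claim to the single statement $a\in\SSS^*\left(\RR^{2d}\right)$. By Proposition \ref{6} the map $a\mapsto K_\tau$ is an isomorphism of $\SSS^*\left(\RR^{2d}\right)$, so $a\in\SSS^*$ if and only if $K_\tau\in\SSS^*$, and by the comment following Proposition \ref{6} this is exactly the condition that $\Op_\tau(a)$ be $*$-regularizing. Hence the entire task is to derive $\SSS^*$-seminorm bounds for $a$ from $a\sim 0$.

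Applying the definition of $\sim$ with $b_j\equiv 0$ yields constants $m,B>0$ in the Beurling case (resp.\ $h,B>0$ in the Roumieu case; we may enlarge $B$ at will since $Q_{B'm_N}^c\subseteq Q_{Bm_N}^c$ whenever $B'\geq B$) such that for every $h>0$ (resp.\ every $m>0$) and every $N\in\ZZ_+$, on $Q_{Bm_N}^c$,
\beqs
\left|D^\alpha_\xi D^\beta_x a(x,\xi)\right|\leq C_h\,h^{|\alpha|+|\beta|+2N}A_\alpha B_\beta A_N B_N\cdot\frac{e^{M(m|\xi|)+M(m|x|)}}{\langle(x,\xi)\rangle^{\rho|\alpha|+\rho|\beta|+2N\rho}}.
\eeqs
For $(x,\xi)$ with $S:=\max(\langle x\rangle,\langle\xi\rangle)\geq Bm_1$, let $N=N(x,\xi)$ be the largest integer with $Bm_N\leq S$. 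Since $A_p,B_p\subset M_p^\rho$, $A_N B_N\leq (c'_0)^2L^{2N}M_N^{2\rho}$; and since $m_j$ is monotone, $M_N\leq m_N^N$. Combining with $m_N\leq S/B\leq\langle(x,\xi)\rangle/B$,
\beqs
\frac{h^{2N(x,\xi)}A_{N(x,\xi)}B_{N(x,\xi)}}{\langle(x,\xi)\rangle^{2N(x,\xi)\rho}}\leq (c'_0)^2\left(\frac{hL}{B^\rho}\right)^{2N(x,\xi)}.
\eeqs

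The crux is converting this exponential decay in $N(x,\xi)$ into a weight of the form $e^{-M(m'|x|)-M(m'|\xi|)}$ for arbitrary $m'>0$ in the Beurling case and for some fixed small $m'$ in the Roumieu case. Since $S<Bm_{N+1}$, one has $|x|,|\xi|\leq Bm_{N+1}$, so Lemma \ref{69} gives $M(c|x|)+M(c|\xi|)\leq 2M(cBm_{N+1})\leq 4(c_0cB+2)(N+1)\log H+2\log c_0$ for every $c>0$. Applied with $c\in\{m,m'\}$ and combined, the required inequality
\beqs
2N\log\left(\frac{B^\rho}{hL}\right)\geq M(m|x|)+M(m|\xi|)+M(m'|x|)+M(m'|\xi|)+(\textrm{const})
\eeqs
reduces to $\log(B^\rho/(hL))\geq 2(c_0(m+m')B+4)\log H$. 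In the Beurling case this is secured by choosing $h$ small enough depending on $m'$; in the Roumieu case, by first enlarging $B$ so that $\log(B^\rho/(hL))$ is large, and then restricting $m$ (and the target $m'$) to a small enough range, yielding finiteness of $\|a\|_{m''}$ for a specific $m''>0$, i.e.\ $a\in\SSS^{M_p,m''}_{\infty}$. On the complementary bounded region $\{S<Bm_1\}$ the needed bound is immediate from $a\in\Gamma_{A_p,B_p,\rho}^{*,\infty}$. The remaining factor satisfies $h^{|\alpha|+|\beta|}A_\alpha B_\beta\leq (c'_0)^2L^{|\alpha|+|\beta|}M_{\alpha+\beta}$, producing the desired $\SSS^*$-seminorm bound on $a$.

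The principal obstacle is the quantitative comparison above---verifying that $N(x,\xi)$ grows fast enough with $|x|$ and $|\xi|$ to absorb the weight growths $M(m|x|)+M(m'|x|)+\ldots$---but Lemma \ref{69} does precisely this accounting. The Roumieu variant proceeds identically with $M$ replaced by appropriate $N_{k_p}$-weights, using Lemma \ref{nn1} to extract a single sequence $(k_p)\in\mathfrak{R}$ controlling the derivatives of $a$, and using the second half of Lemma \ref{69} to match the seminorm family $\|\cdot\|_{(r_p),(s_p)}$ describing $\SSS^{\{M_p\}}$.
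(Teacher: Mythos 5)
Your proposal is correct, and your reduction to showing $a\in\SSS^*\left(\RR^{2d}\right)$ via Proposition \ref{6} is exactly the paper's opening move; the Beurling half (fix $N(x,\xi)$ as the largest index with $(x,\xi)\in Q^c_{Bm_N}$, use $A_NB_N\leq (c_0')^2L^{2N}M_N^{2\rho}$, $M_N\leq m_N^N$, and Lemma \ref{69} to trade $e^{M(m|\cdot|)}e^{M(m'|\cdot|)}$ for $H^{O(N)}$, then shrink $h$) coincides with the paper's argument up to bookkeeping of constants. Where you genuinely diverge is the Roumieu case. The paper inserts an auxiliary sub-lemma stating that $\inf\{M_n/(l^n\rho^n)\mid n\in\ZZ_+,\ \rho\geq Bm_n\}\leq Ce^{-M(l\tilde m\rho)}$, which upgrades the geometric decay in $N$ to genuine $e^{-M(\cdot)}$ decay at a \emph{fixed} rate $\tilde m/(hL)^{1/\rho}$ independent of $m$; it then wins simply by letting $m\to 0$ against that fixed rate, with no need to touch $B$. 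You instead exploit the (legitimate, and unused in the paper) freedom to enlarge $B$ in the definition of $\sim$, so that $\log(B^{\rho}/(hL))$ dominates, and then balance Lemma \ref{69}'s bound $M(cBm_{N+1})\lesssim c_0cB(N+1)\log H$ by shrinking both $m$ and the target index $m'$ \emph{after} $B$ is fixed -- the order of choices matters here, since the right-hand side grows linearly in $B$, and you correctly sequence it. Your route is more elementary (no extra lemma) at the cost of landing in a specific $\SSS^{M_p,m''}_{\infty}$ rather than obtaining a clean quantitative decay rate; both suffice since $\SSS^{\{M_p\}}$ is the inductive limit as $m\to 0$, which also makes your closing appeal to the projective $\|\cdot\|_{(r_p),(s_p)}$ description and to Lemma \ref{nn1} unnecessary -- the $\|\cdot\|_{m''}$ bound you already established is all that is needed. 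The only blemishes are cosmetic: the constant in your reduction should read $4(c_0(m+m')B+4)\log H$ plus an additive constant absorbed via $N+1\leq 2N$, and you should record $m''hL\leq 1$ when disposing of the factor $(m'')^{|\alpha|+|\beta|}h^{|\alpha|+|\beta|}A_{\alpha}B_{\beta}/M_{\alpha+\beta}$; neither affects the validity of the argument.
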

\begin{proof} First we will prove the following lemma.
\begin{lemma}
Let $0<l\leq1$ and $B>1$. There exists $C>0$ depending on $B$, $l$ and $M_p$ and $\tilde{m}>0$ depending only on $B$ and $M_p$ and not on $l$ such that
\beqs
\inf\left\{\frac{M_n}{l^n \rho^n}\Big| n\in\ZZ_+,\,\rho\geq Bm_n\right\}\leq Ce^{-M(l\tilde{m}\rho)},\mbox{ for all } \rho\geq BM_1.
\eeqs
\end{lemma}
\begin{proof} For shorter notation put
\beqs
f(\rho)=\inf\left\{\frac{M_n}{l^n \rho^n}\Big| n\in\ZZ_+,\,\rho\geq Bm_n\right\}
\eeqs
and $T_{\rho,0}=\{n\in\ZZ_+|\rho\geq Bm_n\}$, $T_{\rho,1}=\{n\in\ZZ_+|\rho< Bm_n\}$. Obviously $T_{\rho,0}\cup T_{\rho,1}=\ZZ_+$ and they are not empty. For $n\in\ZZ_+$, denote by $\ZZ_{+,n}$ the set $\{1,...,n\}$. By the properties of $m_n$, there exists $k\in\ZZ_+$ (which depends on $\rho$) such that $T_{\rho,0}=\{1,2,...,k\}$. In the proof of lemma \ref{69}, we proved that, for $s\in\ZZ_+$, $\ds\frac{m_{k+s+1}}{m_{k+1}}\geq \frac{s}{c_0(k+1)}$. Take $s=2k([c_0]+1)$, and for shorter notation, put $t=2[c_0]+2$. Then $m_{k+kt+1}> m_{k+1}$. For $q\in\ZZ_+$, we get $Bm_{k+kt+q}\geq Bm_{k+kt+1}> Bm_{k+1}\geq l\rho$. Then, for $q\in\ZZ_+$, we have
\beqs
\frac{B^{k+kt+q}M_{k+kt+q}}{l^{k+kt+q}\rho^{k+kt+q}}=
\frac{B^{k+kt+q-1}M_{k+kt+q-1}}{l^{k+kt+q-1}\rho^{k+kt+q-1}}\cdot
\frac{Bm_{k+kt+q}}{l\rho}>\frac{B^{k+kt+q-1}M_{k+kt+q-1}}{l^{k+kt+q-1}\rho^{k+kt+q-1}}.
\eeqs
So, we obtain
\beq\label{130}
e^{-M(l\rho/B)}=\inf_{n\in\NN}\frac{B^nM_n}{l^{n}\rho^n}=\inf_{n\in\ZZ_{+,k+kt}}
\frac{B^nM_n}{l^{n}\rho^n},
\eeq
for $\rho>BM_1/l$ (the infimum can not be obtained for $n=0$). Now, let $0\leq q\leq t$, $q\in\NN$ and $n\in T_{\rho,0}$. One has
\beqs
\frac{B^{n+qk}M_{n+qk}}{l^{n+qk}\rho^{n+qk}}\geq \frac{B^{n}M_{n}}{l^{n}\rho^n}
\left(\frac{B^kM_k}{l^{k}\rho^k}\right)^q\geq f(\rho)^{q+1}\geq f(\rho)^{t+1},
\eeqs
where the last inequality holds because $f(\rho)\leq 1$ when $\rho>BM_1/l$. Hence, by (\ref{130}), $e^{-M(l\rho/B)}\geq f(\rho)^{t+1}$, for $\rho>BM_1/l$. Repeated use of proposition 3.6 of \cite{Komatsu1} yields
\beqs
(t+1)M\left(\frac{l\rho}{BH^{t+1}}\right)\leq 2^{t+1}M\left(\frac{l\rho}{BH^{t+1}}\right)\leq M\left(\frac{l\rho}{B}\right)+\ln c',
\eeqs
i.e. $\ds f(\rho)\leq e^{-\frac{1}{t+1}M(l\rho/B)}\leq C e^{-M(l\tilde{m}\rho)}$, $\forall\rho>BM_1/l$, where we put $\tilde{m}=1/(BH^{t+1})$, which depends only on $B$ and the sequence $M_p$ (recall that $t=2[c_0]+2$). For $BM_1\leq \rho\leq BM_1/l$, $f(\rho)$ is bounded so the same inequality holds, possibly with another $C$.
\end{proof}
\indent We continue the proof of the theorem. It is enough to prove that $a\in\SSS^*$, because then the claim will follow from proposition \ref{6}. Because $a\sim 0$, in the $(M_p)$ case, there exist $m>0$ and $B>0$, such that for every $h>0$ there exists $C>0$, resp. in the $\{M_p\}$ case, there exist $h>0$ and $B>0$, such that for every $m>0$ there exists $C>0$, such that
\beqs
\left|D^{\alpha}_{\xi}D^{\beta}_x a(x,\xi)\right|&\leq& C
\frac{h^{|\alpha|+|\beta|+2N}A_{\alpha}B_{\beta}A_NB_Ne^{M(m|\xi|)}e^{M(m|x|)}}
{\langle (x,\xi)\rangle^{\rho|\alpha|+\rho N}\langle (x,\xi)\rangle^{\rho|\beta|+\rho N}}\\
&\leq& C_1\frac{h^{|\alpha|+|\beta|}A_{\alpha}B_{\beta}e^{M(m|\xi|)}e^{M(m|x|)}}
{\langle (x,\xi)\rangle^{\rho|\alpha|+\rho|\beta|}}\cdot\frac{(hL)^{2N} M^{2\rho}_N}
{\langle (x,\xi)\rangle^{2N\rho}},
\eeqs
for all $N\in\ZZ_+$, $\alpha,\beta\in\NN^d$, $(x,\xi)\in Q_{B m_N}^c$. It is obvious that without losing generality we can assume that $B>1$. In the $(M_p)$ case let $m'>0$ be arbitrary but fixed. Let $(x,\xi)\in Q^c_{Bm_1}$. Then, there exists $N\in\ZZ_+$ such that $(x,\xi)\in Q_{Bm_{N+1}}\backslash Q_{Bm_N}$. We estimate as follows\\
$\ds \frac{m'^{|\alpha|+|\beta|}\left| D^{\alpha}_{\xi} D^{\beta}_x a(x,\xi)\right|e^{M(m'|(x,\xi)|)}}{M_{\alpha+\beta}}$
\beqs
&\leq& C_1\frac{(m'h)^{|\alpha|+|\beta|}A_{\alpha}B_{\beta}e^{M(m|\xi|)}e^{M(m|x|)}e^{M(m'|(x,\xi)|)}}
{\langle (x,\xi)\rangle^{\rho|\alpha|+\rho|\beta|}M_{\alpha+\beta}}\cdot\frac{(hL)^{2N} M^{2\rho}_N}
{\langle (x,\xi)\rangle^{2N\rho}}\\
&\leq& C_2\frac{(m'hL)^{|\alpha|+|\beta|}M^{\rho}_{\alpha+\beta}e^{2M(mBm_{N+1})}e^{M(2m'Bm_{N+1})}}
{\langle (x,\xi)\rangle^{\rho|\alpha|+\rho|\beta|}M_{\alpha+\beta}}\cdot\frac{(hL)^{2N} M^{2\rho}_N}
{(Bm_N)^{2N\rho}}\\
&\leq& C_3(m'hL)^{|\alpha|+|\beta|}(hL)^{2N}e^{2M(mBm_{N+1})}e^{M(2m'Bm_{N+1})},
\eeqs
where, in the last inequality, we used $m^N_N\geq M_N$. By lemma \ref{69}, we have
\beqs
e^{2M(mBm_{N+1})}e^{M(2m'Bm_{N+1})}&\leq& c^3_0 H^{4(c_0mB+2)(N+1)}H^{2(2c_0m'B+2)(N+1)}\\
&=&c^3_0 H^{4(c_0mB+2)}H^{2(2c_0m'B+2)}\left(H^{4(c_0mB+2)}H^{2(2c_0m'B+2)}\right)^N.
\eeqs
Take $h$ small enough such that $m'hL\leq 1$ and $h^2L^2H^{4(c_0mB+2)}H^{2(2c_0m'B+2)}\leq1$. We get
\beqs
\frac{m'^{|\alpha|+|\beta|}\left| D^{\alpha}_{\xi} D^{\beta}_x a(x,\xi)\right|e^{M(m'|(x,\xi)|)}}{M_{\alpha+\beta}}\leq C
\eeqs
for all $\alpha,\beta\in\NN^d$ and $(x,\xi)\in Q^c_{Bm_1}$. For $(x,\xi)\in Q_{Bm_1}$ the same estimate will hold, possibly for another $C>0$, because $a\in\Gamma_{A_p,B_p,\rho}^{(M_p),\infty}\left(\RR^{2d}\right)\subseteq \EE^{(M_p)}\left(\RR^{2d}\right)$ and $Q_{Bm_1}$ is bounded.\\
\indent In the $\{M_p\}$ case, by the above observations, we have
\beqs
\left|D^{\alpha}_{\xi}D^{\beta}_x a(x,\xi)\right|&\leq& C_1\frac{h^{|\alpha|+|\beta|}A_{\alpha}B_{\beta}e^{M(m|\xi|)}e^{M(m|x|)}}
{\langle(x,\xi)\rangle^{\rho|\alpha|+\rho|\beta|}}\\
&{}&\hspace{50 pt}\cdot\left(\inf\left\{\frac{\left(h^{1/\rho}L^{1/\rho}\right)^{N} M_N}
{\langle (x,\xi)\rangle^N}\Big| N\in\ZZ_+,\, (x,\xi)\in Q_{B m_N}^c\right\}\right)^{2\rho}.
\eeqs
and it is obvious that without losing generality we can assume that $h\geq1$ and $L\geq 1$ ($L$ is the constant from $A_p\subset M^{\rho}_p$ and $B_p\subset M^{\rho}_p$). Now, note that\\
$\ds\inf\left\{\frac{\left(h^{1/\rho}L^{1/\rho}\right)^{N} M_N}
{\langle (x,\xi)\rangle^N}\Big| N\in\ZZ_+,\, (x,\xi)\in Q_{B m_N}^c\right\}$
\beqs
&\leq&\inf\left\{\frac{\left(h^{1/\rho}L^{1/\rho}\right)^{N} M_N}
{\langle (x,\xi)\rangle^N}\Big| N\in\ZZ_+,\, \langle (x,\xi)\rangle\geq 2B m_N\right\}\leq C'e^{-M\left(\tilde{m}\langle(x,\xi)\rangle/(hL)^{1/\rho}\right)},
\eeqs
for all $\langle (x,\xi)\rangle\geq 2BM_1$, where in the last inequality we use the above lemma with $l=(hL)^{-1/\rho}\leq 1$. By proposition 3.6 of \cite{Komatsu1}, $e^{M(m|\xi|)}e^{M(m|x|)}\leq c_0e^{M(mH|(x,\xi)|)}$. Using the fact that $A_p\subset M^{\rho}_p$ and $B_p\subset M^{\rho}_p$ and the above inequalities, we get
\beqs
\left|D^{\alpha}_{\xi}D^{\beta}_x a(x,\xi)\right|\leq C_2(L^2h)^{|\alpha|+|\beta|}M_{\alpha+\beta}e^{M(mH|(x,\xi)|)}
e^{-M\left(|(x,\xi)|\tilde{m}/(hL)^{1/\rho}\right)},
\eeqs
for all $\alpha,\beta\in\NN^d$ and $\langle(x,\xi)\rangle\geq 2BM_1$. For $\langle(x,\xi)\rangle\leq 2BM_1$ the same estimate will hold, possibly for another $C>0$ and $\tilde{h}>0$ instead of $L^2h$, because $a\in\Gamma_{A_p,B_p,\rho}^{\{M_p\},\infty}\left(\RR^{2d}\right)\subseteq \EE^{\{M_p\}}\left(\RR^{2d}\right)$ and the set $\left\{(x,\xi)\in\RR^{2d}|\langle(x,\xi)\rangle\leq 2BM_1\right\}$ is bounded. $m$ can be arbitrary small, so if we take $m$ small enough we have $e^{M(mH|(x,\xi)|)}e^{-M\left(|(x,\xi)|\tilde{m}/(hL)^{\lambda/\rho}\right)}\leq C_3e^{-M(m'|(x,\xi)|)}$ for some, small enough, $m'>0$, which completes the proof in the $\{M_p\}$ case.
\end{proof}

\begin{theorem}\label{150}
Let $\ds\sum_{j\in\NN}a_j\in FS_{A_p,B_p,\rho}^{*,\infty}\left(\RR^{2d}\right)$ be given. Than, there exists $a\in\Gamma_{A_p,B_p,\rho}^{*,\infty}\left(\RR^{2d}\right)$, such that $\ds a\sim\sum_{j\in\NN}a_j$.
\end{theorem}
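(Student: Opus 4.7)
The plan is to realize $a$ as a Borel-type sum $a(x,\xi) = \sum_{j=0}^\infty \chi_j(x,\xi) a_j(x,\xi)$, where the $\chi_j \in \EE^*\left(\RR^{2d}\right)$ are ultradifferentiable cutoffs excising the region on which $a_j$ is not defined. Since $\chi_j$ will vanish on $Q_{Bm_j}$ and $Bm_j \to \infty$, the sum is automatically locally finite and $a \in \mathcal{C}^\infty\left(\RR^{2d}\right)$; the work lies entirely in arranging the cutoffs so that both $a \in \Gamma_{A_p,B_p,\rho}^{*,\infty}\left(\RR^{2d}\right)$ and $a \sim \sum_j a_j$.

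I would first build $\chi_j$ with $\chi_j \equiv 0$ on $\overline{Q_{Bm_j}}$, $\chi_j \equiv 1$ on $Q^c_{2Bm_j}$, and derivative bounds
\beqs
\left|D^\alpha_\xi D^\beta_x \chi_j(x,\xi)\right| \leq C \frac{h^{|\alpha|+|\beta|} M_{\alpha+\beta}}{(Bm_j)^{|\alpha|+|\beta|}},
\eeqs
holding for every $h > 0$ (with $C=C(h)$) in the $(M_p)$ case and for some fixed $h, C$ in the $\{M_p\}$ case. These are obtained from a scalar cutoff $\varphi \in \EE^*(\RR)$ with $\varphi|_{(-\infty,1]} = 0$, $\varphi|_{[2,\infty)} = 1$ (whose existence follows from $(M.3)'$ for $M_p$ via Komatsu's construction) by setting $\chi_j(x,\xi) = 1 - (1-\varphi(\langle x\rangle/(Bm_j)))(1-\varphi(\langle \xi\rangle/(Bm_j)))$; the derivative bound follows from Fa\`a di Bruno together with scaling, the analyticity of $\langle\cdot\rangle$, and $(M.2)$ for $M_p$.

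Both the membership $a \in \Gamma_{A_p,B_p,\rho}^{*,\infty}$ and the equivalence $a \sim \sum_j a_j$ then follow from one common estimate. Differentiate termwise with Leibniz, combine the bound on $\chi_j$ with the $FS$-bound on $a_j$, use $A_p, B_p \leq c'_0 L^p M_p^\rho$, and exploit that on $\supp \chi_j$ one has $\langle(x,\xi)\rangle \geq Bm_j$, hence by the elementary inequality $M_j \leq m_j^j$,
\beqs
\frac{A_j B_j}{\langle(x,\xi)\rangle^{2j\rho}} \leq (c'_0)^2 \left(\frac{L^2}{B^{2\rho}}\right)^j \text{ on } \supp \chi_j.
\eeqs
Taking $B$ so large that $B^{2\rho} > L^2 H^2$ (with $H$ as in $(M.2)$ for $M_p, A_p, B_p$) makes the $j$-sum geometrically convergent, yielding the $\Gamma$-membership. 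For the equivalence, for $N \geq 1$ and $(x,\xi) \in Q^c_{2Bm_N}$, decompose
\beqs
a - \sum_{j<N} a_j = \sum_{j<N}(\chi_j - 1)a_j + \sum_{j \geq N} \chi_j a_j.
\eeqs
The first sum vanishes because $Q^c_{2Bm_N} \subseteq Q^c_{2Bm_j}$ for $j < N$ (the sequence $m_p$ is non-decreasing) and $\chi_j \equiv 1$ on $Q^c_{2Bm_j}$. The tail is treated by the same estimate but starting from $j \geq N$: using $(M.2)$ for $A_p, B_p$ one gets $A_jB_j/(A_N B_N) \leq c_0^2 H^{2j} A_{j-N} B_{j-N}$, and combining with $M_{j-N} \leq M_j/M_N \leq m_j^{j-N}$ produces the geometric decay $(H^2L^2/B^{2\rho})^{j-N}$, which yields the target bound with the prescribed factor $h^{2N} A_N B_N \langle(x,\xi)\rangle^{-2N\rho}$ (the extra $H^{2N}$ is absorbed by enlarging $h$).

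The main obstacle is the quantifier bookkeeping across the Beurling and Roumieu cases: $B$ must be chosen once and for all, large enough to swamp $L^2 H^2$, while the resulting $a$ must live in the correct limit, which in the $(M_p)$ case demands estimates ``for every $h$'' and in the $\{M_p\}$ case only ``for some $h$'' but uniformly in the growth parameter $m$. Constructing the cutoffs $\chi_j$ with sharp enough derivative bounds is a standard but non-trivial technical input relying on $(M.3)'$ for $M_p$; once in hand, the remainder is the careful combinatorial bookkeeping sketched above, in the spirit of the proof of proposition \ref{72}.
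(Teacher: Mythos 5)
Your overall architecture is the same as the paper's: excise each $a_j$ with a cutoff supported in $Q^c_{Bm_j}$ (your $\chi_j$ is the paper's $1-\chi_j$ up to relabelling and the choice of radii), get $\Gamma$-membership from $A_jB_j\leq (c'_0L)^{2j}M_j^{2\rho}$ together with $m_j^j\geq M_j$ and $\langle(x,\xi)\rangle\geq Bm_j$ on the support, and get the equivalence by noting the finitely many modified terms are literally equal to $a_j$ on $Q^c_{2Bm_N}$ while the tail is handled by the same geometric estimate after an application of $(M.2)$ for $A_p,B_p$. All of that matches the paper's proof.

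There is, however, one genuine gap: you construct the cutoffs in the wrong ultradifferentiable class. You take a single scalar $\varphi\in\EE^{*}(\RR)$ built from $(M.3)'$ for $M_p$, so your $\chi_j$ only satisfies $\left|D^{\alpha}_{\xi}D^{\beta}_x\chi_j\right|\leq Ch^{|\alpha|+|\beta|}M_{\alpha+\beta}(Bm_j)^{-|\alpha|-|\beta|}$. But the norms of $\Gamma^{*,\infty}_{A_p,B_p,\rho}$ and $FS^{*,\infty}_{A_p,B_p,\rho}$ carry $A_{\gamma}B_{\delta}$ in the denominator, so in the Leibniz terms where derivatives fall on the cutoff you must control $M_{\gamma+\delta}/(A_{\gamma}B_{\delta})$ by a geometric factor. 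Since only $A_p\subset M_p$ and $B_p\subset M_p$ are assumed (the containment goes the wrong way), this ratio is in general not geometrically bounded: take $A_p=B_p=p!$ and $M_p=p!^2$, and an $M_p$-class cutoff has derivatives of size $((\gamma+\delta)!)^2$, which no bound of the form $Ch^{|\gamma|+|\delta|}\gamma!\,\delta!$ can absorb. The weight $\langle(x,\xi)\rangle^{\rho(|\gamma|+|\delta|)}$ and the scaling factor $(Bm_j)^{-|\gamma|-|\delta|}$ do not rescue this, because for fixed $j$ the quotient $M_{\gamma}(Bm_j)^{-(1-\rho)|\gamma|}/A_{\gamma}$ still grows faster than any geometric sequence in $|\gamma|$. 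Consequently your $a$ need not lie in $\Gamma^{*,\infty}_{A_p,B_p,\rho}$ and the equivalence estimate fails for the same reason. The fix is exactly what the paper does: build the excision function as a tensor product $\varphi(x)\psi(\xi)$ with $\varphi\in\DD^{(B_p)}(\RR^d)$ (resp. $\DD^{\{B_p\}}$) and $\psi\in\DD^{(A_p)}(\RR^d)$ (resp. $\DD^{\{A_p\}}$), which is possible because $A_p$ and $B_p$ are assumed to satisfy $(M.3)'$; then the cutoff derivatives are bounded by $h^{|\gamma|+|\delta|}A_{\gamma}B_{\delta}$ and cancel against the denominators. With that replacement the rest of your bookkeeping goes through.
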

\begin{proof} Define $\varphi(x)\in\DD^{(B_p)}\left(\RR^{d}\right)$ and $\psi(\xi)\in\DD^{(A_p)}\left(\RR^{d}\right)$, in the $(M_p)$ case, resp. $\varphi(x)\in\DD^{\{B_p\}}\left(\RR^{d}\right)$ and $\psi(\xi)\in\DD^{\{A_p\}}\left(\RR^{d}\right)$ in the $\{M_p\}$ case, such that $0\leq\varphi,\psi\leq 1$, $\varphi(x)=1$ when $\langle x\rangle\leq 2$, $\psi(\xi)=1$ when $\langle\xi\rangle\leq 2$ and $\varphi(x)=0$ when $\langle x\rangle\geq 3$, $\psi(\xi)=0$ when $\langle\xi\rangle\geq 3$. Put $\chi(x,\xi)=\varphi(x)\psi(\xi)$, $\ds \chi_n(x,\xi)=\chi\left(\frac{x}{Rm_n},\frac{\xi}{Rm_n}\right)$ for $n\in\ZZ_+$ and $R>0$ and put $\chi_0(x,\xi)=0$. It is easily checked that $\chi,\chi_n\in\DD^{(M_p)}\left(\RR^{2d}\right)$, resp. $\chi,\chi_n\in\DD^{\{M_p\}}\left(\RR^{2d}\right)$.\\
\indent The $(M_p)$ case. Let $m,B>0$ are chosen such that $\sum_j a_j\in FS_{A_p,B_p,\rho}^{M_p,\infty}\left(\RR^{2d};B,h,m\right)$ for all $h>0$. For $R\geq 2B$, $a(x,\xi)=\sum_j\left(1-\chi_j(x,\xi)\right)a_j(x,\xi)$ is a well defined $\mathcal{C}^{\infty}\left(\RR^{2d}\right)$ function. We will prove that for sufficiently large $R$, $a\in\Gamma_{A_p,B_p,\rho}^{*,\infty}\left(\RR^{2d}\right)$ and $a\sim\sum_j a_j(x,\xi)$ which will complete the proof in the $(M_p)$ case. For $0<h<1$, using the fact that $1-\chi_j(x,\xi)=0$ for $(x,\xi)\in Q_{Rm_j}$, we have the estimates\\
$\ds\frac{\left|D^{\alpha}_{\xi}D^{\beta}_x a(x,\xi)\right|
\langle (x,\xi)\rangle^{\rho|\alpha|+\rho|\beta|}e^{-M(m|\xi|)}e^{-M(m|x|)}}
{(8h)^{|\alpha|+|\beta|}A_{\alpha}B_{\beta}}$
\beqs
&\leq&\sum_{j\in\NN}\sum_{\substack{\gamma\leq\alpha\\ \delta\leq\beta}}{\alpha\choose\gamma}{\beta\choose\delta}\left|D^{\alpha-\gamma}_{\xi}D^{\beta-\delta}_x a_j(x,\xi)\right|e^{-M(m|\xi|)}e^{-M(m|x|)}\\
&{}&\hspace{130 pt}\cdot\frac{\left|D^{\gamma}_{\xi}D^{\delta}_x\left(1-\chi_j(x,\xi)\right)\right|
\langle (x,\xi)\rangle^{\rho|\alpha|+\rho|\beta|}}{(8h)^{|\alpha|+|\beta|}A_{\alpha}B_{\beta}}\\
&\leq&C_0\sum_{j\in\NN}\sum_{\substack{\gamma\leq\alpha\\ \delta\leq\beta}}{\alpha\choose\gamma}{\beta\choose\delta}
\frac{h^{|\alpha|-|\gamma|+|\beta|-|\delta|+2j}A_{\alpha-\gamma}B_{\beta-\delta}A_jB_j}
{(8h)^{|\alpha|+|\beta|}A_{\alpha}B_{\beta}}\\
&{}&\hspace{130 pt} \cdot \langle (x,\xi)\rangle^{\rho|\gamma|+\rho|\delta|-2\rho j}
\left|D^{\gamma}_{\xi}D^{\delta}_x\left(1-\chi_j(x,\xi)\right)\right|\\
&\leq&C_0\sum_{j\in\NN}\frac{1}{8^{|\alpha|+|\beta|}}h^{2j}L^{2j}M_j^{2\rho}\left|1-\chi_j(x,\xi)\right|
\langle (x,\xi)\rangle^{-2\rho j}\\
&{}&+C_0\sum_{j\in\NN}\frac{1}{8^{|\alpha|+|\beta|}}\sum_{\substack{\gamma\leq\alpha, \delta\leq\beta\\(\delta,\gamma)\neq(0,0)}}{\alpha\choose\gamma}{\beta\choose\delta}
\frac{h^{2j}L^{2j}M_j^{2\rho}\left|D^{\gamma}_{\xi}D^{\delta}_x\left(1-\chi_j(x,\xi)\right)\right|
\langle (x,\xi)\rangle^{\rho|\gamma|+\rho|\delta|-2\rho j}}{h^{|\gamma|+|\delta|}A_{\gamma}B_{\delta}}\\
&=&S_1+S_2,
\eeqs
where $S_1$ and $S_2$ are the first and the second sum, correspondingly. To estimate $S_1$ note that, on the support of $1-\chi_j$ the inequality $\langle (x,\xi)\rangle\geq Rm_j$ holds. One obtains
\beqs
S_1\leq C_0\sum_{j\in\NN}\frac{(hL)^{2j}M_j^{2\rho}}{R^{2\rho j}m_j^{2\rho j}}\leq C_0\sum_{j\in\NN}\frac{(hL)^{2j}}{R^{2\rho j}}<\infty,
\eeqs
for large enough $R$ (in the second inequality we use the fact that $m_j^j\geq M_j$). For the estimate of $S_2$, note that $D^{\gamma}_{\xi}D^{\delta}_x\left(1-\chi_j(x,\xi)\right)=0$ when $(x,\xi)\in Q_{3Rm_j}^c$, because $(\delta,\gamma)\neq(0,0)$ and $\chi_j(x,\xi)=0$ on $Q_{3Rm_j}^c$. So, for $(x,\xi)\in Q_{3Rm_j}$, we have that $\langle(x,\xi)\rangle\leq \langle x\rangle+\langle \xi\rangle\leq 6Rm_j$. Moreover, from the construction of $\chi$, we have that for the chosen $h$, there exists $C_1>0$ such that $\ds \left|D^{\alpha}_{\xi}D^{\beta}_x \chi(x,\xi)\right|\leq C_1 h^{|\alpha|+|\beta|}A_{\alpha}B_{\beta}$. By using $m_j^j\geq M_j$, one obtains
\beqs
S_2\leq C_2\sum_{j\in\NN}\frac{1}{8^{|\alpha|+|\beta|}}\sum_{\substack{\gamma\leq\alpha, \delta\leq\beta\\(\delta,\gamma)\neq(0,0)}}{\alpha\choose\gamma}{\beta\choose\delta}
\frac{(hL)^{2j}6^{\rho|\gamma|+\rho|\delta|}M_j^{2\rho}(Rm_j)^{\rho|\gamma|+\rho|\delta|}}
{R^{2\rho j}m_j^{2\rho j}(Rm_j)^{|\gamma|+|\delta|}}
\leq C_3\sum_{j\in\NN}\frac{(hL)^{2j}}{R^{2\rho j}},
\eeqs
which is convergent for large enough $R$. Hence, we get that $a\in \Gamma_{A_p,B_p,\rho}^{M_p,\infty}\left(\RR^{2d};8h,m\right)$ for all $0<h<1$, from what we obtain $a\in \Gamma_{A_p,B_p,\rho}^{(M_p),\infty}\left(\RR^{2d}\right)$. Now, to prove that $\ds a\sim\sum_{j\in\NN}a_j(x,\xi)$. Note that, for $(x,\xi)\in Q_{3Rm_N}^c$, $\ds a-\sum_{j<N}a_j=\sum_{j\geq N}\left(1-\chi_j\right)a_j$. This easily follows from the definition of $\chi_j$ and the fact that $m_n$ is monotonically increasing.\\
$\ds \frac{\left|D^{\alpha}_{\xi}D^{\beta}_x\sum_{j\geq N}\left(1-\chi_j(x,\xi)\right)a_j(x,\xi)\right|
\langle (x, \xi)\rangle^{\rho|\alpha|+\rho|\beta|+2\rho N}e^{-M(m|\xi|)}e^{-M(m|x|)}}
{(8(1+H)h)^{|\alpha|+|\beta|+2N}A_{\alpha}B_{\beta}A_NB_N}$
\beqs
&\leq&\sum_{j\geq N}\frac{\left(1-\chi_j(x,\xi)\right)\left|D^{\alpha}_{\xi}D^{\beta}_xa_j(x,\xi)\right|\langle (x, \xi)\rangle^{\rho|\alpha|+\rho|\beta|+2\rho N}e^{-M(m|\xi|)}e^{-M(m|x|)}}
{(8(1+H)h)^{|\alpha|+|\beta|+2N}A_{\alpha}B_{\beta}A_NB_N}\\
&{}&+\sum_{j\geq N}\sum_{\substack{\gamma\leq\alpha,\delta\leq\beta\\(\delta,\gamma)\neq(0,0)}}
{\alpha\choose\gamma}{\beta\choose\delta}
\left|D^{\alpha-\gamma}_{\xi}D^{\beta-\delta}_xa_j(x,\xi)\right|
e^{-M(m|\xi|)}e^{-M(m|x|)}\\
&{}&\hspace{130 pt}\cdot\frac{\left|D^{\gamma}_{\xi}D^{\delta}_x\left(1-\chi_j(x,\xi)\right)\right|
\langle (x, \xi)\rangle^{\rho|\alpha|+\rho|\beta|+2\rho N}}{(8(1+H)h)^{|\alpha|+|\beta|+2N}A_{\alpha}B_{\beta}A_NB_N}\\
&\leq&C_0\sum_{j\geq N}\frac{\left(1-\chi_j(x,\xi)\right)h^{2j-2N}A_jB_j}
{(1+H)^{2N}\langle (x,\xi)\rangle^{2\rho j-2\rho N}A_NB_N}\\
&{}&+C_0\sum_{j\geq N}\frac{1}{8^{|\alpha|+|\beta|}}
\sum_{\substack{\gamma\leq\alpha,\delta\leq\beta\\(\delta,\gamma)\neq(0,0)}}
{\alpha\choose\gamma}{\beta\choose\delta}
\frac{h^{2j-2N}\left|D^{\gamma}_{\xi}D^{\delta}_x\left(1-\chi_j(x,\xi)\right)\right|
\langle (x,\xi)\rangle^{\rho|\gamma|+\rho|\delta|}A_jB_j}
{(1+H)^{2N}h^{|\gamma|+|\delta|}\langle (x,\xi)\rangle^{2\rho j-2\rho N}A_{\gamma}B_{\delta}A_NB_N}\\
&=&S_1+S_2,
\eeqs
where $S_1$ and $S_2$ are the first and the second sum, correspondingly. To estimate $S_1$, observe that on the support of $1-\chi_j$ the inequality $\langle(x,\xi)\rangle\geq Rm_j$ holds. Using the monotone increasingness of $m_n$ and $(M.2)$ for $A_p$ and $B_p$, one obtains
\beqs
S_1&\leq& C'_0\sum_{j\geq N}\frac{h^{2j-2N}H^{2j}A_{j-N}B_{j-N}}
{(1+H)^{2N}R^{2\rho j-2\rho N}m_j^{2\rho j-2\rho N}}
\leq C_4\sum_{j\geq N}\frac{h^{2j-2N}H^{2j}L^{2j-2N}M_{j-N}^{2\rho}}
{(1+H)^{2N}R^{2\rho j-2\rho N}m_{j-N}^{2\rho j-2\rho N}}\\
&=&C_4\frac{H^{2N}}{(1+H)^{2N}}\sum_{j=0}^{\infty}\left(\frac{hHL}{R^{\rho}}\right)^{2j}
\leq C_4\sum_{j=0}^{\infty}\left(\frac{hHL}{R^{\rho}}\right)^{2j}<\infty,
\eeqs
uniformly, for $N\in\ZZ_+$, for large enough $R$. For $S_2$, note that $D^{\gamma}_{\xi}D^{\delta}_x\left(1-\chi_j(x,\xi)\right)=0$ when $(x,\xi)\in Q_{3Rm_j}^c$, because $(\delta,\gamma)\neq(0,0)$ and $\chi_j(x,\xi)=0$ on $Q_{3Rm_j}^c$. Moreover, from the construction of $\chi$, we have that for the chosen $h$, there exists $C_1>0$ such that $\ds \left|D^{\alpha}_{\xi}D^{\beta}_x \chi(x,\xi)\right|\leq C_1 h^{|\alpha|+|\beta|}A_{\alpha}B_{\beta}$. Now
\beqs
S_2&\leq& C_5\sum_{j\geq N}\frac{1}{8^{|\alpha|+|\beta|}}\sum_{\substack{\gamma\leq\alpha,\delta\leq\beta\\(\delta,\gamma)\neq(0,0)}}
{\alpha\choose\gamma}{\beta\choose\delta}
\frac{h^{2j-2N}6^{|\gamma|+|\delta|}H^{2j}A_{j-N}B_{j-N}}{(1+H)^{2N}R^{2\rho j-2\rho N}m_j^{2\rho j-2\rho N}}\\
&\leq& C_6\sum_{j\geq N}\frac{h^{2j-2N}H^{2j}A_{j-N}B_{j-N}}{(1+H)^{2N}R^{2\rho j-2\rho N}m_j^{2\rho j-2\rho N}},
\eeqs
which we already proved that is bounded uniformly for $N\in\ZZ_+$. Hence, we obtained
\beqs
&{}&\sup_{N\in\ZZ_+}\sup_{\alpha,\beta}\sup_{(x,\xi)\in Q_{3Rm_N}^c}\left|D^{\alpha}_{\xi}D^{\beta}_x\sum_{j\geq N}\left(1-\chi_j(x,\xi)\right)a_j(x,\xi)\right|\\
&{}&\hspace{170 pt}\cdot\frac{\langle (x,\xi)\rangle^{\rho |\alpha|+\rho|\beta|+2\rho N}e^{-M(m|\xi|)}e^{-M(m|x|)}}
{(8(1+H)h)^{|\alpha|+|\beta|+2N}A_{\alpha}B_{\beta}A_NB_N}<\infty,
\eeqs
for arbitrary $h>0$, i.e. $\ds a\sim\sum_{j\in\NN}a_j(x,\xi)$. For the $\{M_p\}$ case, let $h,B>0$ are such that $a\in FS_{A_p,B_p,\rho}^{M_p,\infty}\left(\RR^{2d};B,h,m\right)$ for all $m>0$. Then, for $R\geq 2B$ we define $\ds a(x,\xi)=\sum_{j\in\NN}\left(1-\chi_j(x,\xi)\right)a_j(x,\xi)$ and similarly as above, one proves that, for sufficiently large $R$, $a$ satisfies the claim in the theorem.
\end{proof}

Now we will prove theorems for change of quantization and composition of operators. Note that, unlike in \cite{C3} and \cite{C4}, we do not impose additional conditions on $A_p$ and $B_p$ in the composition theorem.

\begin{theorem}\label{200}
Let $\tau,\tau_1\in\RR$ and $a\in\Gamma_{A_p,B_p,\rho}^{*,\infty}\left(\RR^{2d}\right)$. There exists $b\in\Gamma_{A_p,B_p,\rho}^{*,\infty}\left(\RR^{2d}\right)$ and *-regularizing operator $T$ such that $\Op_{\tau_1}(a)=\Op_{\tau}(b)+T$. Moreover,
\beqs
b(x,\xi)\sim\sum_{\beta}\frac{1}{\beta!}(\tau_1-\tau)^{|\beta|}\partial^{\beta}_{\xi}D^{\beta}_xa(x,\xi), \mbox{ in } FS_{A_p,B_p,\rho}^{*,\infty}\left(\RR^{2d}\right).
\eeqs
\end{theorem}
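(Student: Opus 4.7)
The plan is to produce $b$ via Theorem \ref{150} applied to the candidate formal series
\[
c(x,\xi):=\sum_{\beta\in\NN^d}\frac{(\tau_1-\tau)^{|\beta|}}{\beta!}\partial^\beta_\xi D^\beta_x a(x,\xi),
\]
and then to show that $T:=\Op_{\tau_1}(a)-\Op_\tau(b)$ is $*$-regularizing. Grouping $c$ into blocks $c_j:=\sum_{|\beta|=j}\frac{(\tau_1-\tau)^{|\beta|}}{\beta!}\partial^\beta_\xi D^\beta_x a$, the defining estimate of $a\in\Gamma^{*,\infty}_{A_p,B_p,\rho}$ applied with indices $\alpha+\beta,\delta+\beta$, together with $(M.2)$ for $A_p$ and $B_p$ (producing factors $c_0H^{|\alpha|+|\beta|}A_\alpha A_\beta$ and similarly for $B$), and the identity $\sum_{|\beta|=j}|\tau_1-\tau|^j/\beta!=(d|\tau_1-\tau|)^j/j!$, yield
\[
\bigl|D^\alpha_\xi D^\delta_x c_j\bigr|\,\langle(x,\xi)\rangle^{\rho|\alpha|+\rho|\delta|+2\rho j}e^{-M(m|\xi|)-M(m|x|)}\leq C\,\frac{(d|\tau_1-\tau|)^j}{j!}\,(hH)^{|\alpha|+|\delta|+2j}A_\alpha B_\delta A_j B_j.
\]
The rapidly decaying factor $(d|\tau_1-\tau|)^j/j!$ is absorbed into the constant $h$, so $c\in FS^{*,\infty}_{A_p,B_p,\rho}(\RR^{2d})$, and Theorem \ref{150} delivers $b\in\Gamma^{*,\infty}_{A_p,B_p,\rho}(\RR^{2d})$ with $b\sim c$.

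For each $N\in\ZZ_+$ I Taylor-expand $a$ in its first slot with integral remainder. Setting $w_\tau:=(1-\tau)x+\tau y$, $w_{\tau_s}:=(1-\tau_s)x+\tau_s y$ and $\tau_s:=\tau+s(\tau_1-\tau)$, the identity $(1-\tau_1)x+\tau_1 y=w_\tau-(\tau_1-\tau)(x-y)$ gives
\[
a((1-\tau_1)x+\tau_1 y,\xi)=\sum_{|\beta|<N}\frac{(-(\tau_1-\tau))^{|\beta|}(x-y)^\beta}{\beta!}\partial^\beta_w a(w_\tau,\xi)+\tilde r_N(x,y,\xi).
\]
Substituting into the absolutely convergent integral for $\Op_{\tau_1}(a)u$ from Theorem \ref{17} and integrating by parts $|\beta|$ times in $\xi$ via $(x-y)^\beta e^{i(x-y)\xi}=D^\beta_\xi e^{i(x-y)\xi}$, the principal sum becomes $\Op_\tau(b_N)u$ with $b_N:=\sum_{|\beta|<N}\frac{(\tau_1-\tau)^{|\beta|}}{\beta!}\partial^\beta_\xi D^\beta_x a$, while the remainder is the operator $R_N$ with amplitude
\[
r_N(x,y,\xi):=N\sum_{|\beta|=N}\frac{(\tau_1-\tau)^{|\beta|}}{\beta!}\int_0^1(1-s)^{N-1}\partial^\beta_\xi D^\beta_x a(w_{\tau_s},\xi)\,ds\in\Pi^{*,\infty}_{A_p,B_p,\rho}(\RR^{3d}).
\]
Thus, for every $N$, one has the decomposition $T=\Op_\tau(b-b_N)+R_N$.

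To conclude, I must show $K_T\in\SSS^*(\RR^{2d})$, which by Proposition \ref{ktr} is equivalent to $T$ being $*$-regularizing. Both $b-b_N$ and $r_N$ capture the tail of the asymptotic series past order $N$: the relation $b\sim c$ gives on $Q^c_{Bm_N}$ the bound $|D^\alpha_\xi D^\delta_x(b-b_N)|\leq Ch^{|\alpha|+|\delta|+2N}A_\alpha B_\delta A_NB_N\langle(x,\xi)\rangle^{-\rho(|\alpha|+|\delta|+2N)}e^{M(m|\xi|)+M(m|x|)}$, whereas differentiating $r_N$ under the integral and using $|w_{\tau_s}|\leq|x|+|y|$ uniformly in $s\in[0,1]$ yields the analogous $\Pi$-bound with gain $A_NB_N\langle(x,y,\xi)\rangle^{-2\rho N}$. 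Running the dyadic $\psi_n$-decomposition together with the $P_l$- (resp.\ $P_{l_p}$-)inversion used in the proof of Proposition \ref{72} on each of these two amplitudes, the extra $A_NB_N$ factor combined with $m_N^N\geq M_N$ and Lemma \ref{69} produces a decay $(\mathrm{const}/R^\rho)^N$ in the kernel estimates. Since $T$ itself does not depend on $N$, letting $N\to\infty$ transfers these improving bounds to $K_T$, giving $\sup h^{|\beta|+|\gamma|}|D^\beta_xD^\gamma_y K_T|e^{M(h|(x,y)|)}/M_{\beta+\gamma}<\infty$ for every $h>0$ in the Beurling case (respectively some $h>0$ in the Roumieu case), i.e.\ $K_T\in\SSS^*$.

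The main obstacle is the Roumieu accounting at the last step: the sequence $(k_p)\in\mathfrak{R}$ furnished by Lemma \ref{nn1} when Proposition \ref{72} is applied to $r_N$ must be chosen uniformly in $N$ and in the parameter $s\in[0,1]$. This is feasible because $w_{\tau_s}$ is a convex combination of $x$ and $y$, so the exponential weights on $a$ transfer to $r_N$ with the same $m$ (resp.\ same $(k_p)$ after the standard use of Lemma \ref{15}); the $N$-uniformity then stems from the shared $A_NB_N\langle\cdot\rangle^{-2\rho N}$ gain of $b-b_N$ and $r_N$, which feeds the same power-counting argument as in Proposition \ref{72}.
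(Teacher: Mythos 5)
Your construction of $b$ is exactly the paper's: the same formal series $\sum_\beta\frac{(\tau_1-\tau)^{|\beta|}}{\beta!}\partial^\beta_\xi D^\beta_x a$, verified to lie in $FS^{*,\infty}_{A_p,B_p,\rho}$ via $(M.2)$ and the multinomial identity, fed into Theorem \ref{150}. The Taylor expansion in the first slot of $a$ around $w_\tau$ with integration by parts in $\xi$ is also the correct combinatorial core, and $r_N\in\Pi^{*,\infty}_{A_p,B_p,\rho}(\RR^{3d})$ is fine. The problem is the last step.

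The scheme ``fix $N$, obtain $T=\Op_\tau(b_N-b)+R_N$ with a gain $(\mathrm{const}/R^\rho)^N$, then let $N\to\infty$'' does not close. The gain coming from $A_NB_N\langle\cdot\rangle^{-2\rho N}\lesssim (L/R^{\rho})^{2N}M_N^{2\rho}/m_N^{2\rho N}$ is only available where $\langle(x,\xi)\rangle\geq Bm_N$ (for $b-b_N$) respectively $\langle(w_{\tau_s},\xi)\rangle\geq Bm_N$ (for $r_N$); at any \emph{fixed} point the factor $\langle\cdot\rangle^{-2\rho N}$ is merely $\leq 1$ and the bound carries the bare $A_NB_N\sim L^{2N}M_N^{2\rho}$, which blows up as $N\to\infty$. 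So there are no ``improving bounds'' to transfer to $K_T$: for fixed $(x,y)$ the estimates eventually deteriorate. Worse, even choosing $N$ adapted to the point does not work, because the kernel is obtained by integrating over all $\xi$: on the near-diagonal set $\RR^{2d}\setminus\Omega_r$ with $\langle x'\rangle$ moderate, the portion of the $\xi$-integral over $\{\langle\xi\rangle<Bm_N\}$ sees no gain at all, and after the integration by parts you have discarded the factor $(x-y)^\beta$ that could have compensated there. What is actually needed is to couple the order of the Taylor expansion to a \emph{joint} dyadic localization in $(x',\xi)$: this is what the paper's telescoping decomposition $a(w_{\tau_1},\xi)-b(w_\tau,\xi)=\sum_n(\chi_{n+1}-\chi_n)(w_\tau,\xi)\bigl(a(w_{\tau_1},\xi)-\sum_{j\le n}p_j(w_\tau,\xi)\bigr)$ achieves. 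On the support of $\chi_{n+1}-\chi_n$ one has $Rm_n\lesssim\langle(x',\xi)\rangle\lesssim Rm_{n+1}$, so Taylor expansion to order $n$ there gives the summable gain $A_nB_n(Rm_n)^{-2\rho n}\leq(L/R^\rho)^{2n}$, while the upper bound $|(x,y)|\lesssim Rm_{n+1}$ on the near-diagonal set converts, via Lemma \ref{69}, into the required $e^{-M(m'|(x,y)|)}$ decay of the kernel. (The off-diagonal region $\Omega_r$ is handled by Proposition \ref{72}, as you correctly anticipate.) Your closing paragraph identifies the Roumieu bookkeeping as the main obstacle, but the real obstacle is this coupling of expansion order to localization, which is absent from your argument in both cases.
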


\begin{proof} Put $\ds p_j(x,\xi)=\sum_{|\beta|=j}\frac{1}{\beta!}(\tau_1-\tau)^{|\beta|}\partial^{\beta}_{\xi}D^{\beta}_xa(x,\xi)$. One easily verifies that $\sum_j p_j\in FS_{A_p,B_p,\rho}^{*,\infty}\left(\RR^{2d}\right)$. Take the sequence $\chi_j(x,\xi)$, $j\in\NN$, constructed in the proof of theorem \ref{150}, such that $b=\sum_j (1-\chi_j)p_j$ is an element of $\Gamma_{A_p,B_p,\rho}^{*,\infty}\left(\RR^{2d}\right)$ and $b\sim\sum_j p_j$. By the observations after theorem \ref{npr}, the operators $\Op_{\tau_1}(a)$ and $\Op_{\tau}(b)$ coincide with the operators $A$ and $B$ corresponding to $a$ and $b$ when we observe $a((1-\tau_1)x+\tau_1 y,\xi)$ and $b((1-\tau)x+\tau y,\xi)$ as elements of $\Pi_{A_p,B_p,\rho}^{*,\infty}\left(\RR^{3d}\right)$. It is clear that it is enough to prove that the kernel of $A-B$ is in $\SSS^*\left(\RR^{2d}\right)$. To prove that, write\\
$\ds a((1-\tau_1)x+\tau_1 y,\xi)-b((1-\tau)x+\tau y,\xi)$
\beqs
=\chi_0((1-\tau)x+\tau y,\xi)a((1-\tau_1)x+\tau_1 y,\xi)+\sum_{n=0}^{\infty}
\left((\chi_{n+1}-\chi_n)((1-\tau)x+\tau y,\xi)\right)\\
\cdot\left(a((1-\tau_1)x+\tau_1 y,\xi)-\sum_{j=0}^n p_j((1-\tau)x+\tau y,\xi)\right).
\eeqs
By construction $\chi_0=0$, so $\chi_0a=0$. Note that the above sum is locally finite and it converges in $\EE^*\left(\RR^{3d}\right)$. Denote by $A_n$ the operator corresponding to
\beqs
a_n(x,y,\xi)&=&\left(\chi_{n+1}-\chi_n\right)((1-\tau)x+\tau y,\xi)\\
&{}&\hspace{110 pt}\cdot\left(a((1-\tau_1)x+\tau_1 y,\xi)-\sum_{j=0}^n p_j((1-\tau)x+\tau y,\xi)\right)
\eeqs
considered as an element of $\Pi_{A_p,B_p,\rho}^{*,\infty}\left(\RR^{3d}\right)$. For $u\in\SSS^*\left(\RR^d\right)$, we obtain\\
$\ds Au(x)-Bu(x)$
\beqs
=\frac{1}{(2\pi)^d}\int_{\RR^{2d}}e^{i(x-y)\xi}
\frac{1}{P_l(\xi)}P_l(D_y)\left(\frac{1}{P_l(y-x)}P_l(D_\xi)
\left(\sum_{n=0}^{\infty}a_n(x,y,\xi)u(y)\right)\right)dyd\xi,
\eeqs
in the $(M_p)$ case and the same but with $P_{l_p}$ in place of $P_l$ in the $\{M_p\}$ case. Note that, because of the convergence of the sum in $\EE^*\left(\RR^{3d}\right)$, we can interchange the sum with the ultradifferential operators and with $1/P_l(y-x)$ and $1/P_l(\xi)$, resp. with $1/P_{l_p}(y-x)$ and $1/P_{l_p}(\xi)$. For $v\in\SSS^*\left(\RR^d\right)$, by the way we define $p_j$ and using the fact about the support of $\chi_n$, with similar technic as in the proof of lemma \ref{45}, one proves that
\beqs
\sum_{n=0}^{\infty}\int_{\RR^{3d}}\left|
\frac{1}{P_l(\xi)}P_l(D_y)\left(\frac{1}{P_l(y-x)}P_l(D_\xi)
\left(a_n(x,y,\xi)u(y)\right)\right)v(x)\right|dyd\xi dx<\infty,
\eeqs
for sufficiently small $l$ and sufficiently large $R$ (from the definition of $\chi_n$) in the $(M_p)$ case, resp. the same but with $P_{l_p}$ in place of $P_l$ for sufficiently small $(l_p)\in\mathfrak{R}$ and sufficiently large $R$ (from the definition of $\chi_n$) in the $\{M_p\}$ case. Hence, from monotone and dominated convergence it follows that\\
$\langle Au-Bu,v\rangle$
\beqs
&=&\frac{1}{(2\pi)^d}\sum_{n=0}^{\infty}\int_{\RR^{3d}}e^{i(x-y)\xi}
\frac{1}{P_l(\xi)}P_l(D_y)\left(\frac{1}{P_l(y-x)}P_l(D_\xi)
\left(a_n(x,y,\xi)u(y)\right)\right)v(x)dyd\xi dx\\
&=&\frac{1}{(2\pi)^d}\sum_{n=0}^{\infty}\int_{\RR^{3d}}e^{i(x-y)\xi}a_n(x,y,\xi)u(y)v(x)dyd\xi dx=\sum_{n=0}^{\infty}\langle A_nu,v\rangle
\eeqs
in the $(M_p)$ case, resp. the same but with $P_{l_p}$ in place of $P_l$ in the $\{M_p\}$ case. Hence, $\ds \sum_{k=0}^n A_ku \longrightarrow Au-Bu$, when $n\longrightarrow \infty$ in $\SSS'^*\left(\RR^d\right)$ for every fixed $u\in\SSS^*\left(\RR^d\right)$. But then, because $\SSS^*$ is barreled, by the Banach - Steinhaus theorem (see \cite{Schaefer}, theorem 4.6), $\ds \sum_{k=0}^n A_k\longrightarrow Au-Bu$, when $n\longrightarrow\infty$ in the topology of precompact convergence in $\mathcal{L}\left(\SSS^*\left(\RR^d\right), \SSS'^*\left(\RR^d\right)\right)$. $\SSS^*$ is Montel, hence the convergence holds in $\mathcal{L}_b\left(\SSS^*\left(\RR^d\right), \SSS'^*\left(\RR^d\right)\right)$. If we denote by $K$ and $K_n$, $n\in\NN$, the kernels of the operators $A-B$ and $A_n$, $n\in\NN$ correspondingly, then, by proposition \ref{ktr}, it follows that $\ds K=\sum_{n=0}^{\infty}K_n$, where the convergence is in $\SSS'^*\left(\RR^{2d}\right)$. Let $r=1/(8(1+|\tau|+|\tau_1|))$. Take $\theta\in\EE^*\left(\RR^{2d}\right)$ as in lemma \ref{pll} and put $\tilde{\theta}=1-\theta$. $\theta$ and $\tilde{\theta}$ are obviously multipliers for $\SSS'^*$. By proposition \ref{72} and the properties of $\theta$, $\theta K\in\SSS^*\left(\RR^{2d}\right)$. It is enough to prove that $\tilde{\theta} K\in \SSS^*\left(\RR^{2d}\right)$. Note that $\tilde{\theta} K=\sum_n\tilde{\theta} K_n$. Our goal is to prove that $\sum_n \tilde{\theta}K_n\in\SSS^*$. Observe that
\beqs
K_n(x,y)&=&\frac{1}{(2\pi)^d}\int_{\RR^d}e^{i(x-y)\xi}\left(\chi_{n+1}-\chi_n\right)((1-\tau)x+\tau y,\xi)\\
&{}&\hspace{90 pt}\cdot\left(a((1-\tau_1)x+\tau_1 y,\xi)-\sum_{j=0}^n p_j((1-\tau)x+\tau y,\xi)\right)d\xi,
\eeqs
for all $n\in\NN$. Put
$\ds
\left\{\begin{array}{ll}
x'=(1-\tau)x+\tau y,\\
y'=x-y,
\end{array}\right.
$
from what we obtain
$\ds
\left\{\begin{array}{ll}
x=x'+\tau y',\\
y=x'-(1-\tau)y'.
\end{array}\right.
$
Hence $a((1-\tau_1)x+\tau_1 y,\xi)=a(x'+(\tau-\tau_1) y',\xi)$. If we Taylor expand the right hand side in $y'=0$, we get
\beqs
a((1-\tau_1)x+\tau_1 y,\xi)=\sum_{|\beta|\leq n}
\frac{1}{\beta!}(\tau-\tau_1)^{|\beta|}\partial^{\beta}_x a(x',\xi)(x-y)^{\beta}+W_{n+1}(x,y,\xi),
\eeqs
where $W_{n+1}$ is the reminder of the expansion:
\beqs
W_{n+1}(x,y,\xi)=(n+1)\sum_{|\beta|=n+1}\frac{1}{\beta!}(x-y)^{\beta}
(\tau-\tau_1)^{|\beta|}\int_0^1(1-t)^n \partial^{\beta}_x a(x'+t(\tau-\tau_1)y',\xi)dt.
\eeqs
If we insert the above expression for $a$ in the expression for $K_n$ we obtain
\beqs
K_n(x,y)&=&\frac{1}{(2\pi)^d}\sum_{|\beta|\leq n} \frac{1}{\beta!}(\tau-\tau_1)^{|\beta|}
\int_{\RR^d}e^{i(x-y)\xi}(-D_{\xi})^{\beta}\left(\left(\chi_{n+1}-\chi_n\right)(x',\xi)\partial^{\beta}_x a(x',\xi)\right)d\xi\\
&{}&+\frac{1}{(2\pi)^d}\int_{\RR^d}e^{i(x-y)\xi}\left(\chi_{n+1}-\chi_n\right)(x',\xi)W_{n+1}(x,y,\xi)d\xi\\
&{}&-\frac{1}{(2\pi)^d}\sum_{j=0}^n\int_{\RR^d}e^{i(x-y)\xi}\left(\chi_{n+1}-\chi_n\right)(x',\xi)p_j((1-\tau)x+\tau y,\xi)d\xi\\
&=&S_{1,n}(x,y)+S_{2,n}(x,y)-S_{3,n}(x,y).
\eeqs
Our goal is to prove that each of the sums $\sum_n \tilde{\theta}(S_{1,n}-S_{3,n})$ and $\sum_n \tilde{\theta}S_{2,n}$, is $\SSS^*$ function. Because of the way we defined $p_j$, one obtains\\
$\ds S_{1,n}(x,y)-S_{3,n}(x,y)$
\beqs
=\frac{1}{(2\pi)^d}\sum_{0\neq|\beta|\leq n}\sum_{0\neq\delta\leq\beta}
{\beta\choose\delta}\frac{1}{\beta!}(\tau_1-\tau)^{|\beta|} \int_{\RR^d}e^{i(x-y)\xi}\left(D^{\delta}_{\xi}\left(\chi_{n+1}-\chi_n\right)\right)(x',\xi)
D^{\beta-\delta}_{\xi}\partial^{\beta}_x a(x',\xi)d\xi.
\eeqs
Put
\beqs
\tilde{S}_{\beta,n}(x,y)&=&\frac{1}{(2\pi)^d}\sum_{0\neq\delta\leq\beta}
{\beta\choose\delta}\frac{1}{\beta!}(\tau_1-\tau)^{|\beta|}\\
&{}&\hspace{50 pt}\cdot\int_{\RR^d}e^{i(x-y)\xi}\left(D^{\delta}_{\xi}\left(\chi_{n+1}-\chi_n\right)\right)(x',\xi)
D^{\beta-\delta}_{\xi}\partial^{\beta}_x a(x',\xi)d\xi.
\eeqs
Obviously $\tilde{S}_{\beta,n}\in\EE^*\left(\RR^{2d}\right)\cap\SSS'^*\left(\RR^{2d}\right)$. Let $w\in\SSS^*\left(\RR^{2d}\right)$. Note that
\beqs
\langle \tilde{S}_{\beta,n}, w\rangle&=&\frac{1}{(2\pi)^d}\sum_{0\neq\delta\leq\beta}
{\beta\choose\delta}\frac{1}{\beta!}(\tau_1-\tau)^{|\beta|} \int_{\RR^{3d}}\frac{1}{P_l(\xi)}e^{i(x-y)\xi}\\
&{}&\,\cdot P_l(D_y)\left(\left(D^{\delta}_{\xi}\left(\chi_{n+1}- \chi_n\right)\right)(x',\xi)D^{\beta-\delta}_{\xi}\partial^{\beta}_x a(x',\xi)w(x,y)\right)d\xi dxdy,
\eeqs
in the $(M_p)$ case, where $l>0$ will be chosen later, resp. the same but with $P_{l_p}$ in place of $P_l$ in the $\{M_p\}$ case, where $(l_p)\in\mathfrak{R}$ will be chosen later. We will consider first the $(M_p)$ case. Then there exists $m>0$ such that $a\in\Gamma_{A_p,B_p,\rho}^{(M_p),\infty}\left(\RR^{2d};m\right)$. Chose $l$ such that $\left|P_l(\xi)\right|\geq c'e^{4M(m|\xi|)}$ (cf. proposition \ref{orn}). On the other hand $P_l(\xi)=\sum_{\alpha}c_{\alpha} \xi^{\alpha}$ and there exist $C_0>0$ and $L_0>0$ such that $|c_{\alpha}|\leq C_0L_0^{|\alpha|}/M_{\alpha}$. Note that, when $\left(\chi_{n+1}-\chi_n\right)(x',\xi)\neq0$, $\langle (x',\xi)\rangle\geq Rm_n$. Using this, one easily obtains that
\beqs
&{}&\sum_{n=1}^{\infty}\sum_{0\neq|\beta|\leq n}\sum_{0\neq\delta\leq\beta}{\beta\choose\delta} \frac{(|\tau_1|+|\tau|)^{|\beta|}}{\beta!}\\
&{}&\quad\cdot\int_{\RR^{3d}} \left|\frac{e^{i(x-y)\xi}}{P_l(\xi)} P_l(D_y)\left(\left(D^{\delta}_{\xi}\left(\chi_{n+1}- \chi_n\right)\right)(x',\xi)D^{\beta-\delta}_{\xi}\partial^{\beta}_x a(x',\xi)w(x,y)\right)\right|d\xi dxdy<\infty,
\eeqs
for sufficiently large $R$ (from the definition of $\chi_n$). In the $\{M_p\}$ case, by lemma \ref{nn1} there exists $(k_p)\in\mathfrak{R}$ such that the estimate in that lemma holds (we can regard $a((1-\tau)x+\tau y,\xi)$ as an element of $\Pi_{A_p,B_p,\rho}^{\{M_p\},\infty}\left(\RR^{3d};h\right)$). Take $(l_p)\in\mathfrak{R}$ such that $\left|P_{l_p}(\xi)\right|\geq c' e^{4N_{k_p}(|\xi|)}$. One obtains the same estimate as above but with $P_{l_p}$ in place of $P_l$, for sufficiently large $R$ (from the definition of $\chi_n$). From this we obtain that $\ds\sum_{n=1}^{\infty}(S_{1,n}-S_{3,n})=\sum_{n=1}^{\infty}\sum_{0\neq|\beta|\leq n}\tilde{S}_{\beta,n}$ converges in $\SSS'^*\left(\RR^{2d}\right)$. Denote its limit by $\tilde{S}(x,y)$. Moreover, from the  above, we can change the order of summation and integration. The local finiteness of $\sum_n(\chi_{n+1}-\chi_n)$ implies
\beqs
\sum_{n\geq|\beta|}
D^{\delta}_{\xi}(\chi_{n+1}(x',\xi)-\chi_n(x',\xi))=D^{\delta}_{\xi}(1-\chi_{|\beta|}(x',\xi))=
-D^{\delta}_{\xi}\chi_{|\beta|}(x',\xi),
\eeqs
where the last equality follows from the fact that $\delta\neq 0$. In the $(M_p)$ case, we obtain\\
$\ds \sum_{n=1}^{\infty}\sum_{0\neq|\beta|\leq n}\langle \tilde{S}_{\beta,n}, w\rangle$
\beqs
&=& -\frac{1}{(2\pi)^d}\sum_{|\beta|=1}^{\infty}\sum_{0\neq\delta\leq\beta}
{\beta\choose\delta}\frac{1}{\beta!}(\tau_1-\tau)^{|\beta|}\\
&{}&\quad\cdot\int_{\RR^{3d}} \frac{1}{P_l(\xi)}e^{i(x-y)\xi} P_l(D_y)\left(D^{\delta}_{\xi}\chi_{|\beta|}(x',\xi)D^{\beta-\delta}_{\xi}\partial^{\beta}_x a(x',\xi)w(x,y)\right)d\xi dxdy\\
&=&-\frac{1}{(2\pi)^d}\sum_{|\beta|=1}^{\infty}\sum_{0\neq\delta\leq\beta}
{\beta\choose\delta}\frac{1}{\beta!}(\tau_1-\tau)^{|\beta|} \int_{\RR^{2d}}I_{\beta,\delta}(x,y)w(x,y)dxdy,
\eeqs
where we put $\ds I_{\beta,\delta}(x,y)=\int_{\RR^d}e^{i(x-y)\xi}D^{\delta}_{\xi}\chi_{|\beta|}(x',\xi) D^{\beta-\delta}_{\xi}\partial^{\beta}_x a(x',\xi)d\xi$. Similarly, in the $\{M_p\}$ case we obtain the same equality. Hence $\ds-\frac{1}{(2\pi)^d}\sum_{|\beta|=1}^{\infty}\sum_{0\neq\delta\leq\beta}
{\beta\choose\delta}\frac{1}{\beta!}(\tau_1-\tau)^{|\beta|} I_{\beta,\delta}(x,y)$ converges to $\tilde{S}(x,y)$ in $\SSS'^*\left(\RR^{2d}\right)$. Now we will prove that $\tilde{\theta}\tilde{S}$ is $\SSS^*$ function. Denote
\beq\label{zao}
T_n=\left\{(x,\xi)\in\RR^{2d}||x|\leq 3Rm_n\mbox{ and }|\xi|\leq 3Rm_n\right\}
\eeq
and put $T_{\xi,n}$ to be the projection of $T_n$ on $\RR^d_{\xi}$. By construction $\supp\chi_{|\beta|}\subseteq T_{|\beta|}$. So, for the derivatives of $I_{\beta,\delta}(x,y)$ when $(x,y)\in \RR^{2d}\backslash\Omega_{r}\supseteq\mathrm{supp\,}\tilde{\theta}$, we have\\
$\ds\left|D^{\beta'}_xD^{\gamma'}_yI_{\beta,\delta}(x,y)\right|$
\beqs
&\leq&\sum_{\substack{\alpha\leq\beta'\\ \nu\leq\gamma'}}\sum_{\substack{\alpha'+\alpha''=\alpha\\ \nu'+\nu''=\nu}}
{\gamma'\choose\nu}{\beta'\choose\alpha}{\alpha\choose\alpha'}{\nu\choose\nu'} (1+|\tau|)^{|\alpha'|+|\nu'|}(1+|\tau|)^{|\beta'|-|\alpha|+|\gamma'|-|\nu|}\\
&{}&\hspace{30 pt}\cdot\int_{T_{\xi,|\beta|}}
|\xi|^{|\alpha''|+|\nu''|}\left|D^{\delta}_{\xi}D^{\alpha'+\nu'}_x\chi_{|\beta|}(x',\xi)\right|
\left|D^{\beta-\delta}_{\xi} D^{\beta+\gamma'-\nu+\beta'-\alpha}_x a(x',\xi)\right|d\xi\\
&\leq&C_1\sum_{\substack{\alpha\leq\beta'\\ \nu\leq\gamma'}}\sum_{\substack{\alpha'+\alpha''=\alpha\\ \nu'+\nu''=\nu}}
{\gamma'\choose\nu}{\beta'\choose\alpha}{\alpha\choose\alpha'}{\nu\choose\nu'} (1+|\tau|)^{|\beta'|+|\gamma'|-|\alpha''|-|\nu''|}\\
&{}&\hspace{30 pt}\cdot\int_{T_{\xi,|\beta|}}|\xi|^{|\alpha''|+|\nu''|}
\frac{h_1^{|\delta|+|\alpha'|+|\nu'|}A_{\delta}B_{\alpha'+\nu'}}
{(Rm_{|\beta|})^{|\delta|+|\alpha'|+|\nu'|}}\\
&{}&\hspace{30 pt}\cdot\frac{h^{|2\beta-\delta+\beta'+\gamma'-\alpha-\nu|}
A_{\beta-\delta}B_{\beta+\beta'+\gamma'-\alpha-\nu}e^{M(m|\xi|)}e^{M(m|x'|)}}
{\langle (x',\xi)\rangle^{\rho|2\beta-\delta+\beta'+\gamma'-\alpha-\nu|}}d\xi.
\eeqs
Because $\delta\neq 0$, $D^{\delta}_{\xi}D^{\alpha'+\nu'}_x\chi_{|\beta|}(x',\xi)=0$ when $\chi_{|\beta|}(x',\xi)=1$, hence when $|x'|\leq Rm_{|\beta|}$ and $|\xi|\leq Rm_{|\beta|}$. So, when $D^{\delta}_{\xi}D^{\alpha'+\nu'}_x\chi_{|\beta|}(x',\xi)\neq0$ we have $\langle(x',\xi)\rangle \geq Rm_{|\beta|}$. We obtain
\beqs
R^{|\delta|+|\alpha'|+|\nu'|}m_{|\beta|}^{|\delta|+|\alpha'|+|\nu'|}\langle (x',\xi)\rangle^{\rho|2\beta-\delta+\beta'+\gamma'-\alpha-\nu|}\geq
\left(Rm_{|\beta|}\right)^{\rho|2\beta+\beta'+\gamma'-\alpha''-\nu''|}.
\eeqs
By assumption, there exists $c,L\geq 1$ such that $A_p\leq cL^p M_p^{\rho}$ and $B_p\leq cL^p M_p^{\rho}$. Hence\\
$\ds\frac{A_{\delta}B_{\alpha'+\nu'}A_{\beta-\delta}B_{\beta+\beta'+\gamma'-\alpha-\nu}}
{\left(Rm_{|\beta|}\right)^{\rho|2\beta+\beta'+\gamma'-\alpha''-\nu''|}}$
\beqs
&\leq& \frac{A_{\beta}B_{\beta+\beta'+\gamma'-\alpha''-\nu''}}
{\left(Rm_{|\beta|}\right)^{\rho|2\beta+\beta'+\gamma'-\alpha''-\nu''|}}\leq
\frac{c^2L^{|2\beta+\beta'+\gamma'-\alpha''-\nu''|}M_{2\beta+\beta'+\gamma'-\alpha''-\nu''}^{\rho}}
{\left(Rm_{|\beta|}\right)^{2\rho|\beta|}
\left(Rm_{|\beta|}\right)^{\rho|\beta'+\gamma'-\alpha''-\nu''|}}\\
&\leq& \frac{C''(LH^2)^{|2\beta+\beta'+\gamma'-\alpha''-\nu''|}M_{\beta}^{2\rho}M_{\beta'+\gamma'}}
{R^{2\rho|\beta|}m_{|\beta|}^{2\rho|\beta|}\left(RM_1\right)^{\rho|\beta'+\gamma'-\alpha''-\nu''|}
M_{\alpha''+\nu''}}
\leq \frac{C''(LH^2)^{|2\beta+\beta'+\gamma'-\alpha''-\nu''|}M_{\beta'+\gamma'}}
{R^{2\rho|\beta|}\left(RM_1\right)^{\rho|\beta'+\gamma'-\alpha''-\nu''|}M_{\alpha''+\nu''}},
\eeqs
where, in the last inequality, we used that $m_n^n\geq M_n$. Also, note that when $(x,y)\in\RR^{2d}\backslash\Omega_r$ and $\chi_{|\beta|}((1-\tau)x+\tau y,\xi)\neq 0$, we have the following inequalities
\beqs
|x'|&=&|(1-\tau)x+\tau y|\leq 3R m_{|\beta|},\\
|x|^2+|y|^2&\leq& 2|x|^2+|x-y|^2+2|x||x-y|\leq 2|x|^2+r^2\langle x\rangle^2+2r|x|\langle x\rangle\\
&\leq& (2+r)^2\langle x\rangle^2,\\
1+\left|(1-\tau)x+\tau y\right|^2&\geq& 1+|x|^2+|\tau|^2|x-y|^2-2|\tau||x||x-y|\geq \langle x\rangle^2-\frac{1}{4}\langle x\rangle^2\geq \frac{1}{4}\langle x\rangle^2,
\eeqs
(remember, $r=1/(8(1+|\tau|+|\tau_1|))$). Put $s=2+r$ for shorter notation. Combining these inequalities we get $|(x,y)|\leq 2s\langle x'\rangle\leq 8sRm_{|\beta|}$. Using this and proposition 3.6 of \cite{Komatsu1}, for arbitrary $m'>0$, we obtain
\beqs
e^{M(m|x'|)}\leq e^{M(3mRm_{|\beta|})}e^{M(8sm'Rm_{|\beta|})}e^{-M(m'|(x,y)|)}\leq c_0e^{M\left(8s(m+m')HRm_{|\beta|}\right)}e^{-M(m'|(x,y)|)},
\eeqs
when $(x,y)\in\RR^{2d}\backslash\Omega_r$ and $\chi_{|\beta|}((1-\tau)x+\tau y,\xi)\neq 0$. Using these inequalities in the estimate for $D^{\beta'}_xD^{\gamma'}_yI_{\beta,\delta}(x,y)$, for $(x,y)\in \RR^{2d}\backslash\Omega_r$, we get\\
$\ds \left|D^{\beta'}_xD^{\gamma'}_yI_{\beta,\delta}(x,y)\right|$
\beqs
&\leq&C_3\sum_{\substack{\alpha\leq\beta'\\ \nu\leq\gamma'}}\sum_{\substack{\alpha'+\alpha''=\alpha\\ \nu'+\nu''=\nu}}
{\gamma'\choose\nu}{\beta'\choose\alpha}{\alpha\choose\alpha'}{\nu\choose\nu'} (1+|\tau|)^{|\beta'|+|\gamma'|-|\alpha''|-|\nu''|}\\
&{}&\hspace{5 pt}\cdot\int_{T_{\xi,|\beta|}}|\xi|^{|\alpha''|+|\nu''|}
\frac{(LH^2)^{|2\beta+\beta'+\gamma'-\alpha''-\nu''|}h_1^{|\delta|+|\alpha'|+|\nu'|}
h^{|2\beta-\delta+\beta'+\gamma'-\alpha-\nu|}M_{\beta'+\gamma'}}
{R^{2\rho|\beta|}\left(RM_1\right)^{\rho|\beta'+\gamma'-\alpha''-\nu''|}M_{\alpha''+\nu''}}\\
&{}&\hspace{110 pt}\cdot e^{M(m|\xi|)}e^{M\left(8s(m+m')HR m_{|\beta|}\right)}e^{-M(m'|(x,y)|)}d\xi\\
&\leq&C_4\frac{M_{\beta'+\gamma'}}{e^{M(m'|(x,y)|)}}
\sum_{\substack{\alpha\leq\beta'\\ \nu\leq\gamma'}}\sum_{\substack{\alpha'+\alpha''=\alpha\\ \nu'+\nu''=\nu}}
{\gamma'\choose\nu}{\beta'\choose\alpha}{\alpha\choose\alpha'}{\nu\choose\nu'}\\
&{}&\cdot\int_{T_{\xi,|\beta|}}
\frac{(LH^2)^{2|\beta|}h_1^{|\delta|+|\alpha'|+|\nu'|}h^{|2\beta-\delta+\beta'+\gamma'-\alpha-\nu|}
e^{2M((m+m')R|\xi|)}e^{M\left(8s(m+m')HRm_{|\beta|}\right)}}
{(m'R)^{|\alpha''|+|\nu''|}R^{2\rho|\beta|}}d\xi,
\eeqs
where, in the last inequality, we used that
\beqs
\frac{(1+|\tau|)^{|\beta'|+|\gamma'|-|\alpha''|-|\nu''|}
(LH^2)^{|\beta'|+|\gamma'|-|\alpha''|-|\nu''|}}{\left(RM_1\right)^{\rho|\beta'+\gamma'-\alpha''-\nu''|}}\leq 1,
\eeqs
for large enough $R$. Moreover, on $T_{\xi,|\beta|}$, by proposition 3.6 of \cite{Komatsu1}, we have $2M((m+m')R|\xi|)\leq M(3(m+m')HR^2m_{|\beta|})+\ln c_0$. Lemma \ref{69} implies\\
$\ds e^{M\left(3(m+m')HR^2m_{|\beta|}\right)}e^{M\left(8s(m+m')HR m_{|\beta|}\right)}$
\beqs
&\leq& c_0^2H^{2(3c_0(m+m')HR^2+2)|\beta|}H^{2\left(8c_0s(m+m')HR+2\right)|\beta|}\leq c_0^2H^{4\left(8c_0s(m+m')HR^2+2\right)|\beta|}.
\eeqs
Similarly as in the proof for proposition \ref{72}, we have $\left|T_{\xi,|\beta|}\right| \leq C_5R^d H^{d|\beta|}$, for some $C_5>0$. For the $(M_p)$ case, $m$ is fixed. It is clear that, without losing generality, we can assume that $m\geq 1$. Choose $R$ such that $R\geq 4$ and $R^{2\rho}\geq 2(1+|\tau|+|\tau_1|)L^2H^{d+4}$. For arbitrary but fixed $m'>0$, choose $h$ such that $hH^{4\left(8c_0s(m+m')HR^2+2\right)}\leq 1$ and $2h\leq 1/(4m')$. Moreover, choose $h_1$ such that $h_1\leq h$. Then we obtain
\beqs
\left|D^{\beta'}_xD^{\gamma'}_yI_{\beta,\delta}(x,y)\right|&\leq&C_6 R^d\frac{M_{\beta'+\gamma'}h^{|\beta|}}{e^{M(m'|(x,y)|)}}\left(\frac{L^2H^{d+4}}{R^{2\rho}}\right)^{|\beta|}
\left(2h+\frac{1}{m'R}\right)^{|\beta'|+|\gamma'|}\\
&\leq&C_6 R^d\frac{M_{\beta'+\gamma'}h^{|\beta|}}
{e^{M(m'|(x,y)|)}}\cdot\frac{1}{(2(1+|\tau|+|\tau_1|))^{|\beta|}}\cdot\frac{1}{(2m')^{|\beta'|+|\gamma'|}},
\eeqs
when $(x,y)\in \RR^{2d}\backslash\Omega_{r}$. Note that the choice of $R$ (and hence of $\chi_n$, $n\in\NN$) depends only on $A_p$, $B_p$, $M_p$, $\tau$, $\tau_1$ and $a$, but not on $m'$. By the definition of $\tilde{\theta}$ it follows that there exists $C'>0$ such that
\beqs
\left|D^{\beta'}_xD^{\gamma'}_y\left(\tilde{\theta}(x,y)I_{\beta,\delta}(x,y)\right)\right|\leq C' R^d\frac{M_{\beta'+\gamma'}h^{|\beta|}}{e^{M(m'|(x,y)|)}}\cdot\frac{1}{(2(1+|\tau|+|\tau_1|))^{|\beta|}} \cdot\frac{1}{m'^{|\beta'|+|\gamma'|}},
\eeqs
for all $(x,y)\in\RR^{2d}$ and $\beta',\gamma'\in\NN^d$. Hence
\beqs
\sum_{|\beta|=1}^{\infty}\sum_{0\neq\delta\leq\beta}
{\beta\choose\delta}\frac{1}{\beta!}(|\tau_1|+|\tau|)^{|\beta|}
\left|D^{\beta'}_xD^{\gamma'}_y\left(\tilde{\theta}(x,y)I_{\beta,\delta}(x,y)\right)\right|
\leq C\frac{M_{\beta'+\gamma'}}{m'^{|\beta'|+|\gamma'|}e^{M(m'|(x,y)|)}},
\eeqs
for all $(x,y)\in\RR^{2d}$ and $\beta',\gamma'\in\NN^d$. From the arbitrariness of $m'$ it follows that $\tilde{\theta}\tilde{S}\in\SSS^{(M_p)}$. Now we consider the $\{M_p\}$ case. Then $h$ and $h_1$ are fixed. Choose $R$ such that $R^{2\rho}\geq 2(1+|\tau|+|\tau_1|)(h+h_1)hL^2H^{d+16}$ and then choose $m$ and $m'$ such that $8c_0s(m+m')HR^2\leq 1$. Then $H^{4\left(8c_0s(m+m')HR^2+2\right)|\beta|}\leq H^{12|\beta|}$. Then we have
\beqs
\left|D^{\beta'}_xD^{\gamma'}_yI_{\beta,\delta}(x,y)\right|&\leq&C_6 R^d\frac{M_{\beta'+\gamma'}\left(hL^2H^{d+16}\right)^{|\beta|}h_1^{|\delta|} h^{|\beta-\delta|}}{e^{M(m'|(x,y)|)}R^{2\rho|\beta|}} \left(h+h_1+\frac{1}{m'R}\right)^{|\beta'|+|\gamma'|}\\
&\leq&C_6R^d\frac{M_{\beta'+\gamma'}}{e^{M(m'|(x,y)|)}(2(1+|\tau|+|\tau_1|))^{|\beta|}} \left(h+h_1+\frac{1}{m'R}\right)^{|\beta'|+|\gamma'|},
\eeqs
when $(x,y)\in\RR^{2d}\backslash\Omega_r$. By the definition of $\tilde{\theta}$ it follows that there exist $C'>0$ and $\tilde{h}>0$ such that
\beqs
\left|D^{\beta'}_xD^{\gamma'}_y\left(\tilde{\theta}(x,y)I_{\beta,\delta}(x,y)\right)\right|\leq C'R^d\frac{M_{\beta'+\gamma'}\tilde{h}^{|\beta'|+|\gamma'|}}{e^{M(m'|(x,y)|)}(2(1+|\tau|+|\tau_1|))^{|\beta|}},
\eeqs
for all $(x,y)\in\RR^{2d}$ and $\beta',\gamma'\in\NN^d$. Hence
\beqs
\sum_{|\beta|=1}^{\infty}\sum_{0\neq\delta\leq\beta}
{\beta\choose\delta}\frac{1}{\beta!}(|\tau_1|+|\tau|)^{|\beta|}
\left|D^{\beta'}_xD^{\gamma'}_y\left(\tilde{\theta}(x,y)I_{\beta,\delta}(x,y)\right)\right|\leq C\frac{M_{\beta'+\gamma'}\tilde{h}^{|\beta'|+|\gamma'|}}{e^{M(m'|(x,y)|)}},
\eeqs
for all $(x,y)\in\RR^{2d}$ and $\beta',\gamma'\in\NN^d$, from what we obtain $\tilde{\theta}\tilde{S}\in\SSS^{\{M_p\}}$.\\
\indent It remains to prove that $\ds\sum_{n=0}^{\infty}\tilde{\theta}(x,y)S_{2,n}(x,y)\in\SSS^*$. Note that
\beqs
S_{2,n}(x,y)&=&\frac{n+1}{(2\pi)^d}\sum_{|\beta|=n+1}\sum_{\delta\leq\beta}{\beta\choose\delta} \frac{(-1)^{|\beta|}}{\beta!}(\tau-\tau_1)^{|\beta|} \int_{\RR^d}e^{i(x-y)\xi}D^{\delta}_{\xi}\left(\chi_{n+1}-\chi_n\right)(x',\xi)\\
&{}&\hspace{110 pt}\cdot\int_0^1(1-t)^n D^{\beta-\delta}_{\xi}\partial^{\beta}_x a(x'+t(\tau-\tau_1)y',\xi)dtd\xi.
\eeqs
For brevity in notation, put
\beqs
\tilde{I}_{\beta,\delta,n}(x,y)=\int_{\RR^d}e^{i(x-y)\xi}D^{\delta}_{\xi}\left(\chi_{n+1}-\chi_n\right)(x',\xi) \int_0^1(1-t)^n D^{\beta-\delta}_{\xi}\partial^{\beta}_x a(x'+t(\tau-\tau_1)y',\xi)dtd\xi.
\eeqs
We will estimate $\left|D^{\beta'}_x D^{\gamma'}_y \tilde{I}_{\beta,\delta,n}(x,y)\right|$ when $(x,y)\in\RR^{2d}\backslash\Omega_r\supseteq \mathrm{supp\,}\tilde{\theta}$.\\
$\ds\left|D^{\beta'}_x D^{\gamma'}_y \tilde{I}_{\beta,\delta,n}(x,y)\right|$
\beqs
&\leq& C_1\sum_{\substack{\alpha\leq\beta'\\ \nu\leq\gamma'}}\sum_{\substack{\alpha'+\alpha''=\alpha\\ \nu'+\nu''=\nu}}{\beta'\choose\alpha}{\gamma'\choose\nu}{\alpha\choose\alpha'}{\nu\choose\nu'} (2(1+|\tau|+|\tau_1|))^{|\beta'|-|\alpha''|+|\gamma'|-|\nu''|}\\
&{}&\hspace{5 pt}\cdot\int_{T_{\xi,n+1}}|\xi|^{|\alpha''|+|\nu''|} \frac{h_1^{|\delta|+|\alpha'|+|\nu'|}A_{\delta}B_{\alpha'+\nu'}}{(Rm_n)^{|\alpha'|+|\nu'|+|\delta|}}\\
&{}&\hspace{15 pt}\cdot\int_0^1(1-t)^n\frac{h^{2|\beta|-|\delta|+|\beta'|-|\alpha|+|\gamma'|-|\nu|} A_{\beta-\delta}B_{\beta+\beta'-\alpha+\gamma'-\nu}e^{M(m|\xi|)}e^{M(m|x'+t(\tau-\tau_1)y'|)}}{\langle (x'+t(\tau-\tau_1)y',\xi)\rangle^{\rho(2|\beta|-|\delta|+|\beta'|-|\alpha|+|\gamma'|-|\nu|)}}dtd\xi.
\eeqs
Above, we already proved that on $\RR^{2d}\backslash\Omega_r$, $\langle x\rangle \leq 2\langle x'\rangle$. Using this, by similar technic as there, one easily proves that $\langle(x'+t(\tau-\tau_1)y',\xi)\rangle\geq Rm_n$ when $(x,y)\in\RR^{2d}\backslash\Omega_r$ and $\chi_{n+1}(x',\xi)-\chi(x',\xi)\neq 0$. Also, for such $x$, $y$ and $\xi$ we have $|x'+t(\tau-\tau_1)y'|\leq |x'|+(|\tau|+|\tau_1|)|y'|\leq \langle x'\rangle+2r(|\tau|+|\tau_1|)\langle x'\rangle\leq 8Rm_{n+1}$ and $|\xi|\leq 3Rm_{n+1}$. Now, the proof continues analogously as above and one obtains $\sum_n\tilde{\theta}S_{2,n}\in\SSS^*$. We already pointed out that from this it follows that $K\in\SSS^*$.
\end{proof}

\begin{theorem}
Let $\tau\in\RR$ and $a\in\Gamma_{A_p,B_p,\rho}^{*,\infty}\left(\RR^{2d}\right)$. The transposed operator, ${}^t\Op_{\tau}(a)$, is still a pseudo-differential operator and it is equal to $\Op_{1-\tau}(a(x,-\xi))$. Moreover, there exist $b\in\Gamma_{A_p,B_p,\rho}^{*,\infty}\left(\RR^{2d}\right)$ and *-regularizing operator $T$ such that ${}^t\Op_{\tau}(a)=\Op_{\tau}(b)+T$ and
\beqs
b(x,\xi)\sim\sum_{\alpha}\frac{1}{\alpha!}(1-2\tau)^{|\alpha|}(-\partial_{\xi})^{\alpha}D^{\alpha}_x a(x,-\xi) \mbox{ in } FS_{A_p,B_p,\rho}^{*,\infty}\left(\RR^{2d}\right).
\eeqs
\end{theorem}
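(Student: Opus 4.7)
The plan is to reduce the statement to a direct application of Theorem \ref{200} together with the transposition formula that was already derived in the text just after Theorem \ref{npr}. Recall that for any $a\in\Gamma_{A_p,B_p,\rho}^{*,\infty}\left(\RR^{2d}\right)$ and $u\in\SSS^*$, $v\in\SSS^*$, a short computation with the kernel representation (\ref{3}) gives
\beqs
{}^t\Op_{\tau}(a(x,\xi))=\Op_{1-\tau}(a(x,-\xi)),
\eeqs
so the first claim is immediate. Hence the only real content is to rewrite $\Op_{1-\tau}(a(x,-\xi))$ as $\Op_{\tau}(b)$ modulo a $*$-regularizing operator and to identify the asymptotic expansion of $b$.

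First I would verify that $\tilde a(x,\xi):=a(x,-\xi)$ again lies in $\Gamma_{A_p,B_p,\rho}^{*,\infty}\left(\RR^{2d}\right)$. This is trivial since the defining seminorms only see $|\xi|$ and $\langle(x,\xi)\rangle$, both invariant under $\xi\mapsto -\xi$, and $|D^{\alpha}_\xi D^{\beta}_x\tilde a(x,\xi)|=|(D^{\alpha}_\xi D^{\beta}_x a)(x,-\xi)|$.

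Next I would apply Theorem \ref{200} to $\tilde a$ with the choice $\tau_1:=1-\tau$ (keeping the target quantization equal to $\tau$). The theorem produces $b\in\Gamma_{A_p,B_p,\rho}^{*,\infty}\left(\RR^{2d}\right)$ and a $*$-regularizing $T$ with
\beqs
\Op_{1-\tau}(\tilde a)=\Op_{\tau}(b)+T,
\eeqs
which, combined with the transposition identity above, gives ${}^t\Op_{\tau}(a)=\Op_{\tau}(b)+T$. The asymptotic expansion furnished by Theorem \ref{200} reads
\beqs
b(x,\xi)\sim\sum_{\beta}\frac{1}{\beta!}\bigl((1-\tau)-\tau\bigr)^{|\beta|}\partial^{\beta}_{\xi}D^{\beta}_x\tilde a(x,\xi)
=\sum_{\beta}\frac{(1-2\tau)^{|\beta|}}{\beta!}\partial^{\beta}_{\xi}D^{\beta}_x a(x,-\xi),
\eeqs
in $FS_{A_p,B_p,\rho}^{*,\infty}\left(\RR^{2d}\right)$. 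The elementary chain-rule identity
\beqs
\partial^{\beta}_{\xi}\bigl(a(x,-\xi)\bigr)=(-1)^{|\beta|}(\partial^{\beta}_{\xi}a)(x,-\xi)=\bigl((-\partial_{\xi})^{\beta}a\bigr)(x,-\xi)
\eeqs
converts this into the expansion stated in the theorem.

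There is no genuine obstacle: the only point that deserves checking is that the chain-rule substitution is valid inside $FS_{A_p,B_p,\rho}^{*,\infty}\left(\RR^{2d}\right)$, but this is a termwise equality of smooth functions on each $Q^c_{Bm_j}$, so the equivalence relation $\sim$ is preserved. The verification that $\tilde a$ lies in the same symbol class, the invocation of Theorem \ref{200}, and the chain-rule substitution are all routine; the substance of the result is entirely contained in Theorem \ref{200} and the transposition identity.
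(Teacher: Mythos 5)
Your proposal is correct and follows exactly the route of the paper: the paper's own proof consists of citing the identity ${}^t\Op_{\tau}(a(x,\xi))=\Op_{1-\tau}(a(x,-\xi))$ established in the discussion after Theorem \ref{npr} and then invoking Theorem \ref{200} with $\tau_1=1-\tau$ applied to $a(x,-\xi)$. The additional details you supply (invariance of the symbol class under $\xi\mapsto-\xi$ and the sign bookkeeping $\partial^{\beta}_{\xi}\bigl(a(x,-\xi)\bigr)=\bigl((-\partial_{\xi})^{\beta}a\bigr)(x,-\xi)$) are exactly the routine verifications the paper leaves implicit.
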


\begin{proof} In the observation after theorem \ref{npr} we proved that ${}^t\Op_{\tau}(a(x,\xi))=\Op_{1-\tau}(a(x,-\xi))$. The rest follows from theorem \ref{200}.
\end{proof}

\begin{theorem}\label{cef}
Let $a,b\in\Gamma_{A_p,B_p,\rho}^{*,\infty}\left(\RR^{2d}\right)$. There exist $f\in\Gamma_{A_p,B_p,\rho}^{*,\infty}\left(\RR^{2d}\right)$ and *-regularizing operator $T$ such that $a(x,D)b(x,D)=f(x,D)+T$ and $f$ has the asymptotic expansion
\beq\label{ezz}
f(x,\xi)\sim\sum_{\alpha}\frac{1}{\alpha!}\partial^{\alpha}_{\xi}a(x,\xi)D^{\alpha}_x b(x,\xi) \mbox{ in } FS_{A_p,B_p,\rho}^{*,\infty}\left(\RR^{2d}\right).
\eeq
\end{theorem}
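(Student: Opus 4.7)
First I would verify that $\sum_{j\in\NN}p_j$, with
\[
p_j(x,\xi)=\sum_{|\alpha|=j}\frac{1}{\alpha!}\partial^\alpha_\xi a(x,\xi)\,D^\alpha_x b(x,\xi),
\]
belongs to $FS^{*,\infty}_{A_p,B_p,\rho}(\RR^{2d})$. Leibniz applied to $D^{\alpha'}_\xi D^{\beta'}_x p_j$ produces products of factors bounded by the symbol estimates of $a$ and $b$; the condition $(M.2)$ for $A_p$ and $B_p$, together with $A_p,B_p\subset M_p^\rho$, lets one collect $A_{|\alpha'|+j}B_{|\beta'|+j}$ into $A_{\alpha'}B_{\beta'}A_jB_j$ up to constants. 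The decisive gain is supplied by the inequality $\langle(x,\xi)\rangle^{2\rho j}\geq(Bm_j)^{2\rho j}$ on $Q^c_{Bm_j}$ combined with $m_j^j\geq M_j$, which for $B$ large accounts for the $A_jB_j$ in the denominator of the $FS$-norm. Theorem~\ref{150} then supplies $f\in\Gamma^{*,\infty}_{A_p,B_p,\rho}(\RR^{2d})$ with $f\sim\sum_j p_j$.

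\textbf{Stage 2 (oscillatory-integral symbol of the composition).} Next, I would show that the composition admits the representation $a(x,D)\,b(x,D)=c(x,D)$, where
\[
c(x,\xi)=\frac{1}{(2\pi)^d}\iint e^{-iw\eta}\,a(x,\xi+\eta)\,b(x+w,\xi)\,dw\,d\eta .
\]
This identity is suggested by starting from the kernel
\[
K(x,z)=\frac{1}{(2\pi)^{2d}}\iiint e^{i(x-y)\xi+i(y-z)\eta}\,a(x,\xi)\,b(y,\eta)\,dy\,d\xi\,d\eta,
\]
performing the shift $\eta\mapsto\xi+\eta$ and the change of variable $w=y-z$, and observing that the resulting $z$-dependence on $b(z+w,\eta)$ is absorbed by $w\mapsto w+x-z$. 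To rigorise the double oscillatory integral for $c$, I would insert a cutoff $\phi(\delta(w,\eta))$ with $\phi\in\SSS^*$, $\phi(0)=1$, and integrate by parts using
\[
e^{-iw\eta}=P_l(w)^{-1}P_l(D_\eta)e^{-iw\eta}=P_l(\eta)^{-1}P_l(-D_w)e^{-iw\eta},
\]
with $P_l$, resp.\ $P_{l_p}$, chosen via Proposition~\ref{orn} so that the integrand becomes absolutely convergent. Passing to $\delta\to 0^+$ as in Lemma~\ref{45} defines $c\in\mathcal{C}^\infty(\RR^{2d})$; differentiation under the integral and the same type of Leibniz estimates used in Theorem~\ref{17} then show $c\in\Gamma^{*,\infty}_{A_p,B_p,\rho}(\RR^{2d})$. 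The equality $a(x,D)b(x,D)u=c(x,D)u$ for $u\in\SSS^*$ follows from Fubini applied, in test-function pairing, to the regularised kernel.

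\textbf{Stage 3 (asymptotic identification and conclusion).} To prove $c\sim\sum_j p_j$, Taylor-expand $a(x,\xi+\eta)$ in $\eta$ at $0$ to order $N$:
\[
a(x,\xi+\eta)=\sum_{|\alpha|<N}\frac{\eta^\alpha}{\alpha!}\partial^\alpha_\xi a(x,\xi)+N\!\sum_{|\alpha|=N}\frac{\eta^\alpha}{\alpha!}\int_0^1(1-t)^{N-1}\partial^\alpha_\xi a(x,\xi+t\eta)\,dt.
\]
Inserting this into the oscillatory integral defining $c$ and using the identity $\frac{1}{(2\pi)^d}\iint e^{-iw\eta}\eta^\alpha b(x+w,\xi)\,dw\,d\eta=D^\alpha_x b(x,\xi)$ gives $c=\sum_{|\alpha|<N}p_\alpha+\tilde R_N$, where $\tilde R_N$ is the oscillatory integral of the Taylor remainder. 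Estimating $D^{\alpha'}_\xi D^{\beta'}_x \tilde R_N$ on $Q^c_{Bm_N}$, by integration by parts in $w,\eta$ (again via the operators $P_l$, resp.\ $P_{l_p}$) and the bounds on $a,b$ combined with $A_p,B_p\subset M_p^\rho$ and $m_N^N\geq M_N$, yields the $FS$-equivalence $c\sim\sum_j p_j$. Therefore $c\sim f$, so $c-f\sim 0$ in $FS^{*,\infty}_{A_p,B_p,\rho}(\RR^{2d})$, and the first theorem of Section~3 applies to $c-f\in\Gamma^{*,\infty}_{A_p,B_p,\rho}(\RR^{2d})$ to conclude that $T:=(c-f)(x,D)$ is $*$-regularizing. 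Then $a(x,D)b(x,D)=c(x,D)=f(x,D)+T$, which is the claim.

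\textbf{Main obstacle.} The heart of the argument is the estimate of the Taylor remainder $\tilde R_N$ in Stage~3, uniformly in $N$. The remainder integrand involves $\partial^\alpha_\xi a(x,\xi+t\eta)$ for $t\in[0,1]$, whose ultra-polynomial growth in $\eta$ (through $e^{M(m|\xi+t\eta|)}$, resp.\ $e^{N_{k_p}(|\xi+t\eta|)}$) must be dominated by the decay $\langle(x,\xi)\rangle^{-2\rho N}$ demanded by membership in $FS$. Balancing this against the gain from the $P_l$-integrations by parts, and choosing $B$ and $l$ (resp.\ $(l_p)$) with care, is the same delicate bookkeeping encountered in the proof of Theorem~\ref{200}, now compounded by the $t$-integration and the Taylor-order dependence.
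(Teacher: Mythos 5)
Your route is genuinely different from the paper's. You compose the two left-quantized operators directly and represent the product symbol by the classical double oscillatory integral $c(x,\xi)=(2\pi)^{-d}\iint e^{-iw\eta}a(x,\xi+\eta)b(x+w,\xi)\,dw\,d\eta$, then Taylor-expand $a$ in the second variable. The paper instead never introduces a double-phase oscillatory integral: it applies the transposition theorem twice to write $b(x,D)=\Op_1(b_2)+{}^tT'$ with $b_2(x,\xi)=b_1(x,-\xi)$, so that $a(x,D)b(x,D)$ becomes (modulo a regularizing term) the operator with amplitude $\tilde a(x,y,\xi)=a(x,\xi)b_2(y,\xi)\in\Pi^{*,\infty}_{A_p,B_p,\rho}(\RR^{3d})$ --- a single-$\xi$-integral object already covered by Lemma \ref{45}, Theorem \ref{npr} and Proposition \ref{72}; one then Taylor-expands $b_2(y,\xi)$ at $y=x$ exactly as in Theorem \ref{200}, and finally collapses the resulting double asymptotic sum $\sum_\beta\frac{1}{\beta!}\partial^\beta_\xi(a\,D^\beta_xb_2)$ to $\sum_\alpha\frac{1}{\alpha!}\partial^\alpha_\xi a\,D^\alpha_xb$ by a combinatorial identity. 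What your approach buys is directness (the final expansion appears immediately, with no detour through $b_2$ and no combinatorial resummation); what the paper's approach buys is that every analytic estimate is recycled from machinery already built for the phase $e^{i(x-y)\xi}$, so no new oscillatory-integral theory is needed.

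The place where your sketch is genuinely thin is Stage 2. The assertion that $c\in\Gamma^{*,\infty}_{A_p,B_p,\rho}(\RR^{2d})$ follows from ``the same type of Leibniz estimates used in Theorem \ref{17}'' overstates what that theorem provides: Theorem \ref{17} only establishes convergence of an iterated integral and never tracks the weights $\langle(x,\xi)\rangle^{\rho|\alpha|+\rho|\beta|}$. In your integral the derivatives land on $a(x,\xi+\eta)$ and $b(x+w,\xi)$ and produce decay in $\langle(x,\xi+\eta)\rangle$ and $\langle(x+w,\xi)\rangle$ rather than in $\langle(x,\xi)\rangle$; the loss factors $\langle\eta\rangle^{\rho|\alpha|}$ and $\langle w\rangle^{\rho|\beta|}$ grow with the order of differentiation and cannot be absorbed into $h^{|\alpha|+|\beta|}A_\alpha B_\beta$ by a crude bound (that would cost an extra $M_\alpha^\rho M_\beta^\rho$). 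One must split the $(w,\eta)$-integration into the region $|\eta|+|w|\le\varepsilon\langle(x,\xi)\rangle$, where the weights are comparable, and its complement, where the subexponential decay $e^{-M(r|\eta|)}e^{-M(r|w|)}$ coming from the $P_l$-integrations by parts dominates everything including $\langle(x,\xi)\rangle^{-\rho(|\alpha|+|\beta|)}$; this is precisely the role played in the paper's framework by the factor $\langle x-y\rangle^{\rho(\cdots)}$ built into the $\Pi$-norms. The same splitting, made uniform in the Taylor order $N$ (with the help of $m_N^N\ge M_N$ and Lemma \ref{69}, as you anticipate), is needed for the remainder in Stage 3. None of this is fatal --- it is the standard Shubin argument transplanted to the ultradifferentiable setting --- but it is the bulk of the proof and has to be written out; it does not follow from any estimate already established in the paper.
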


\begin{proof} By the above theorem ${}^{t}b(x,D)=b_1(x,D)+T'$ where $T'$ is *-regularizing operator and $b_1\in\Gamma_{A_p,B_p,\rho}^{*,\infty}\left(\RR^{2d}\right)$ with asymptotic expansion
\beq\label{240}
b_1(x,\xi)\sim\sum_{\alpha}\frac{1}{\alpha!}(-\partial_{\xi})^{\alpha}D^{\alpha}_x b(x,-\xi).
\eeq
Again, by the above theorem, ${}^{t}b_1(x,D)=\Op_1(b_1(x,-\xi))$ and $b(x,D)={}^t\Op_1(b(x,-\xi))={}^{t}\left({}^{t}b(x,D)\right)$. Put $b_2(x,\xi)=b_1(x,-\xi)$. Then we have
\beqs
b(x,D)={}^{t}\left({}^{t}b(x,D)\right)={}^{t}b_1(x,D)+{}^{t}T'=\Op_1(b_2)+{}^{t}T'.
\eeqs
We have $a(x,D)b(x,D)=a(x,D)\Op_1(b_2)+T_1$, where we put $T_1=a(x,D){}^{t}T'$, which is *-regularizing. Because $\ds\mathcal{F}\left(\Op_1(b_2)u\right)(\xi)=\int_{\RR^d}e^{-iy\xi}b_2(y,\xi)u(y)dy$ and $\Op_1(b_2)u\in\SSS^*$,
\beqs
a(x,D)\Op_1(b_2)u(x)=\frac{1}{(2\pi)^d}\int_{\RR^d}\int_{\RR^d}e^{i(x-y)\xi}a(x,\xi)b_2(y,\xi)u(y)dyd\xi
\eeqs
and this is well defined as iterated integral by theorem \ref{17}. $\tilde{a}(x,y,\xi)=a(x,\xi)b_2(y,\xi)$ is an element of $\Pi_{A_p,B_p,\rho}^{*,\infty}\left(\RR^{3d}\right)$. To prove that one only has to use the inequalities $2\langle (x,\xi)\rangle\langle x-y\rangle \geq \langle(x,y,\xi)\rangle$ and $2\langle (y,\xi)\rangle\langle x-y\rangle \geq \langle(x,y,\xi)\rangle$ in the estimates for the derivatives of $\tilde{a}$. The operator $\tilde{A}$ corresponding to this $\tilde{a}$ is the same as $a(x,D)\Op_1(b_2)$. Let
\beqs
p_j(x,\xi)=\sum_{|\beta|=j}\frac{1}{\beta!}\partial^{\beta}_{\xi} \left(a(x,\xi)D^{\beta}_x b_2(x,\xi)\right).
\eeqs
Obviously $\sum_j p_j\in FS_{A_p,B_p,\rho}^{*,\infty}\left(\RR^{2d}\right)$. Let $\chi_j(x,\xi)$, $j\in\NN$, be the sequence constructed in the proof of theorem \ref{150}, such that $f=\sum_j (1-\chi_j)p_j$ is an element of $\Gamma_{A_p,B_p,\rho}^{*,\infty}\left(\RR^{2d}\right)$ and $f\sim\sum_j p_j$. By the observations after theorem \ref{npr}, the operator $f(x,D)$ coincide with the operator $F$ corresponding to $f$ when we observe $f(x,\xi)$ as elements of $\Pi_{A_p,B_p,\rho}^{*,\infty}\left(\RR^{3d}\right)$. We will prove that the kernel of $\tilde{A}-F$ is in $\SSS^*\left(\RR^{2d}\right)$, i.e. $\tilde{A}-F$ is *-regularizing. Similarly as in the proof of theorem \ref{200}, $\ds \tilde{a}(x,y,\xi)-f(x,\xi)=\sum_{n=0}^{\infty}\tilde{a}_n(x,y,\xi)$
where we put
\beqs
\tilde{a}_n(x,y,\xi)=\left(\chi_{n+1}(x,\xi)-\chi_n(x,\xi)\right)\left(\tilde{a}(x,y,\xi)-\sum_{j=0}^n p_j(x,\xi)\right),
\eeqs
which is obviously an element of $\Pi_{A_p,B_p,\rho}^{*,\infty}\left(\RR^{3d}\right)$. Denote by $\tilde{A}_n$ its corresponding operator. Similarly as in the proof of theorem \ref{200}, we have $K(x,y)=\sum_n K_n(x,y)$, where $K$ is the kernel of $\tilde{A}-F$, $K_n$ is the kernel of $\tilde{A}_n$ and the convergence holds in $\SSS'^*$. Observe that
\beqs
K_n(x,y)=\frac{1}{(2\pi)^d}\int_{\RR^d}e^{i(x-y)\xi}\left(\chi_{n+1}-\chi_n\right)(x,\xi)\left(a(x,\xi)b_2(y,\xi) -\sum_{j=0}^n p_j(x,\xi)\right)d\xi,
\eeqs
for all $n\in\NN$. Let $r=1/8$. Take $\theta\in\EE^*\left(\RR^{2d}\right)$ as in lemma \ref{pll} and put $\tilde{\theta}=1-\theta$. $\theta$ and $\tilde{\theta}$ are obviously multipliers for $\SSS'^*$. By proposition \ref{72} and the properties of $\theta$, $\theta K\in\SSS^*\left(\RR^{2d}\right)$. It is enough to prove that $\tilde{\theta} K\in \SSS^*\left(\RR^{2d}\right)$. Note that $\tilde{\theta} K=\sum_n\tilde{\theta} K_n$. Our goal is to prove that $\sum_n \tilde{\theta}K_n\in\SSS^*$. Taylor expand $b_2(y,\xi)$ in the first variable to obtain
\beqs
b_2(y,\xi)=\sum_{|\beta|\leq n}\frac{1}{\beta!}(y-x)^{\beta}\partial^{\beta}_x b_2(x,\xi)+W_{n+1}(x,y,\xi),
\eeqs
where $W_{n+1}$ is the remainder of the expansion:
\beqs
W_{n+1}(x,y,\xi)=(n+1)\sum_{|\beta|=n+1}\frac{1}{\beta!}(y-x)^{\beta}\int_0^1 (1-t)^n \partial^{\beta}_x b_2(x+t(y-x),\xi)dt.
\eeqs
If we insert this in the expression for $K_n$, keeping in mind the definition of $p_j$, we have $K_n(x,y)=S_{1,n}(x,y)+S_{2,n}(x,y)$ where we put
\beqs
S_{1,n}(x,y)&=&\frac{1}{(2\pi)^d}\sum_{0\neq |\beta|\leq n}\sum_{0\neq\delta\leq \beta}{\beta\choose\delta}\frac{1}{\beta!}\\
&{}&\hspace{30 pt}\cdot\int_{\RR^d}e^{i(x-y)\xi}D^{\delta}_{\xi}\left(\chi_{n+1}-\chi_n\right)(x,\xi) D^{\beta-\delta}_{\xi}\left(a(x,\xi)\partial^{\beta}_x b_2(x,\xi)\right)d\xi\\
S_{2,n}(x,y)&=&\frac{1}{(2\pi)^d}\int_{\RR^d}e^{i(x-y)\xi}\left(\chi_{n+1}-\chi_n\right)(x,\xi) a(x,\xi)W_{n+1}(x,y,\xi)d\xi.
\eeqs
Our goal is to prove that $\sum_n \tilde{\theta}S_{1,n}$ and $\sum_n \tilde{\theta}S_{2,n}$ are $\SSS^*$ functions. Similarly as in the proof of theorem \ref{200}, $\sum_n S_{1,n}$ converges in $\SSS'^*$ to $\tilde{S}$ and $\ds\tilde{S}=-\frac{1}{(2\pi)^d}\sum_{|\beta|=1}^{\infty}\sum_{0\neq\delta\leq\beta}
{\beta\choose\delta}\frac{1}{\beta!}I_{\beta,\delta}$, where the convergence is in $\SSS'^*$, where we put
\beqs
I_{\beta,\delta}(x,y)=\int_{\RR^d}e^{i(x-y)\xi}D^{\delta}_{\xi}\chi_{|\beta|}(x,\xi) D^{\beta-\delta}_{\xi}\left(a(x,\xi)\partial^{\beta}_x b_2(x,\xi)\right)d\xi.
\eeqs
To prove that $\ds-\frac{1}{(2\pi)^d}\sum_{|\beta|=1}^{\infty}\sum_{0\neq\delta\leq\beta}
{\beta\choose\delta}\frac{1}{\beta!}\tilde{\theta}I_{\beta,\delta}$ is in $\SSS^*$ we have to estimate the derivatives of $I_{\beta,\delta}$ when $(x,y)\in\RR^{2d}\backslash\Omega_r\supseteq \mathrm{supp\,}\tilde{\theta}$. Note that, we can choose $m$ such that $a,b_2\in\Gamma_{A_p,B_p,\rho}^{(M_p),\infty}\left(\RR^{2d};m\right)$ in the $(M_p)$ case, resp. we can choose $h$ such that $a,b_2\in\Gamma_{A_p,B_p,\rho}^{\{M_p\},\infty}\left(\RR^{2d};h\right)$ in the $\{M_p\}$ case. Let $T_n$ be as in (\ref{zao}) and put $T_{\xi,n}$ to be the projection of $T_n$ on $\RR^d_{\xi}$. By the way we constructed $\chi_n$, it follows that $\supp\chi_{|\beta|}\subseteq T_{|\beta|}$.\\
$\ds\left|D^{\beta'}_xD^{\gamma'}_yI_{\beta,\delta}(x,y)\right|$
\beqs
&\leq&\sum_{\kappa\leq\beta-\delta}\sum_{\alpha\leq\beta'}\sum_{\alpha'+\alpha''=\alpha}\sum_{\alpha'''\leq\beta'-\alpha} {{\beta-\delta}\choose\kappa}
{\beta'\choose\alpha}{\alpha\choose\alpha'}{{\beta'-\alpha}\choose\alpha'''}\\
&{}&\hspace{30 pt}\cdot\int_{T_{\xi,|\beta|}}
|\xi|^{|\alpha''|+|\gamma'|}\left|D^{\delta}_{\xi}D^{\alpha'}_x\chi_{|\beta|}(x,\xi)\right| \left|D^{\beta-\delta-\kappa}_{\xi} D^{\beta'-\alpha-\alpha'''}_x a(x,\xi)D^{\kappa}_{\xi}D^{\beta+\alpha'''}_x b_2(x,\xi)\right|d\xi\\
&\leq&C_1\sum_{\kappa\leq\beta-\delta}\sum_{\alpha\leq\beta'}\sum_{\alpha'+\alpha''=\alpha}{{\beta-\delta}\choose\kappa}
{\beta'\choose\alpha}{\alpha\choose\alpha'}2^{|\beta'|-|\alpha|}\\
&{}&\hspace{30 pt}\cdot\int_{T_{\xi,|\beta|}}|\xi|^{|\alpha''|+|\gamma'|} \frac{h_1^{|\delta|+|\alpha'|}A_{\delta}B_{\alpha'}h^{|2\beta-\delta+\beta'-\alpha|}
A_{\beta-\delta}B_{\beta+\beta'-\alpha}e^{2M(m|\xi|)}e^{2M(m|x|)}}
{(Rm_{|\beta|})^{|\delta|+|\alpha'|}\langle (x,\xi)\rangle^{\rho|2\beta-\delta+\beta'-\alpha|}}d\xi.
\eeqs
Because $\delta\neq 0$, $D^{\delta}_{\xi}D^{\alpha'}_x\chi_{|\beta|}(x,\xi)=0$ when $\chi_{|\beta|}(x,\xi)=1$, hence when $|x|\leq Rm_{|\beta|}$ and $|\xi|\leq Rm_{|\beta|}$. So, when $D^{\delta}_{\xi}D^{\alpha'}_x\chi_{|\beta|}(x,\xi)\neq0$ we have $\langle (x,\xi)\rangle\geq Rm_{|\beta|}$. Now the proof continues analogously as for theorem \ref{200}.\\
\indent Next we will prove that $\sum_n\tilde{\theta}(x,y)S_{2,n}(x,y)\in\SSS^*$. Note that
\beqs
S_{2,n}(x,y)&=&\frac{n+1}{(2\pi)^d}\sum_{|\beta|=n+1}\sum_{\delta\leq\beta}\sum_{\kappa\leq\beta-\delta} {\beta\choose\delta}{{\beta-\delta}\choose\kappa}\frac{1}{\beta!} \int_{\RR^d}e^{i(x-y)\xi}D^{\delta}_{\xi}\left(\chi_{n+1}-\chi_n\right)(x,\xi)\\
&{}&\hspace{90 pt}\cdot D^{\kappa}_{\xi}a(x,\xi)\int_0^1(1-t)^n D^{\beta-\delta-\kappa}_{\xi}\partial^{\beta}_x b_2(x+t(y-x),\xi)dtd\xi.
\eeqs
For brevity in notation, put
\beqs
\tilde{I}_{\beta,\delta,n}(x,y)&=&\sum_{\kappa\leq\beta-\delta}{{\beta-\delta}\choose\kappa} \int_{\RR^d}e^{i(x-y)\xi}D^{\delta}_{\xi}\left(\chi_{n+1}-\chi_n\right)(x,\xi) D^{\kappa}_{\xi}a(x,\xi)\\
&{}&\hspace{30 pt}\cdot\int_0^1(1-t)^n D^{\beta-\delta-\kappa}_{\xi}\partial^{\beta}_x b_2(x+t(y-x),\xi)dtd\xi.
\eeqs
We will estimate $\left|D^{\beta'}_x D^{\gamma'}_y \tilde{I}_{\beta,\delta,n}(x,y)\right|$ when $(x,y)\in\RR^{2d}\backslash\Omega_r\supseteq \mathrm{supp\,}\tilde{\theta}$.\\
$\ds\left|D^{\beta'}_x D^{\gamma'}_y \tilde{I}_{\beta,\delta,n}(x,y)\right|$
\beqs
&\leq& C_1\sum_{\substack{\alpha\leq\beta'\\ \nu\leq\gamma'}} \sum_{\alpha'+\alpha''=\alpha}\sum_{\kappa\leq\beta-\delta}\sum_{\alpha'''\leq\beta'-\alpha} {\beta'\choose\alpha}{\gamma'\choose\nu}{\alpha\choose\alpha'}{{\beta-\delta}\choose\kappa} {{\beta'-\alpha}\choose\alpha'''}\\
&{}&\hspace{5 pt}\cdot\int_{T_{\xi,n+1}}|\xi|^{|\alpha''|+|\nu|} \frac{h_1^{|\delta|+|\alpha'|}A_{\delta}B_{\alpha'}}{(Rm_n)^{|\alpha'|+|\delta|}}\\
&{}&\hspace{15 pt}\cdot\int_0^1(1-t)^n\frac{h^{2|\beta|-|\delta|+|\beta'|-|\alpha|+|\gamma'|-|\nu|} A_{\beta-\delta}B_{\beta+\beta'-\alpha+\gamma'-\nu}e^{2M(m|\xi|)}e^{2M(m(|x|+|y|))}} {\langle(x,\xi)\rangle^{\rho(|\beta'|-|\alpha|-|\alpha'''|+|\kappa|)} \langle(x+t(y-x),\xi)\rangle^{\rho(2|\beta|-|\delta|-|\kappa|+|\alpha'''|+|\gamma'|-|\nu|)}}dtd\xi.
\eeqs
When $(\chi_{n+1}-\chi_n)(x,\xi)\neq 0$ and $(x,y)\in\RR^{2d}\backslash\Omega_r$, the inequalities $\langle (x,\xi)\rangle\geq Rm_m$ and $\langle (x+t(y-x),\xi)\rangle\geq Rm_m$ hold. Also $|x|+|y|\leq 2|x|+|x-y|\leq s\langle x\rangle\leq 4sRm_{n+1}$, where we put $s=2+r$. Hence\\
$\ds\left|D^{\beta'}_x D^{\gamma'}_y \tilde{I}_{\beta,\delta,n}(x,y)\right|$
\beqs
&\leq& \frac{C_2}{n+1}\sum_{\substack{\alpha\leq\beta'\\ \nu\leq\gamma'}} \sum_{\alpha'+\alpha''=\alpha} {\beta'\choose\alpha}{\gamma'\choose\nu}{\alpha\choose\alpha'}2^{|\beta|-|\delta|+|\beta'|-|\alpha|} \int_{T_{\xi,n+1}}|\xi|^{|\alpha''|+|\nu|}d\xi\\
&{}&\hspace{15 pt}\cdot\frac{h_1^{|\delta|+|\alpha'|}h^{2|\beta|-|\delta|+|\beta'|-|\alpha|+|\gamma'|-|\nu|} A_{\beta}B_{\beta+\beta'-\alpha''+\gamma'-\nu}e^{M(3mHRm_{n+1})}e^{M(4smHRm_{n+1})}} {(Rm_n)^{\rho(2|\beta|+|\beta'|-|\alpha''|+|\gamma'|-|\nu|)}}.
\eeqs
The proof continues in analogous fashion as for theorem \ref{200} and one obtains that $\sum_n\tilde{\theta}S_{2,n}\in\SSS^*$. Hence, we proved that $a(x,D)b(x,D)=a(x,D)\Op_1(b_2)+T_1=f(x,D)+T_2$, where $T_2$ is *-regularizing operator. It remains to prove (\ref{ezz}). Obviously, it is enough to prove that $\ds \sum_{\beta}\frac{1}{\beta!}\partial^{\beta}_{\xi} \left(a(x,\xi)D^{\beta}_x b_2(x,\xi)\right)\sim\sum_{\beta}\frac{1}{\beta!}\partial^{\beta}_{\xi}a(x,\xi)D^{\beta}_x b(x,\xi)$. For $N\in\ZZ_+$ we have\\
$\ds \sum_{j=0}^{N-1}\sum_{|\beta|=j}\frac{1}{\beta!}\partial^{\beta}_{\xi} \left(a\cdot D^{\beta}_x b_2\right)$
\beqs
&=&\sum_{j=0}^{N-1}\sum_{|\alpha+\gamma|=j}\frac{1}{\alpha!\gamma!}\partial^{\gamma}_{\xi} a\cdot \left(\partial^{\alpha}_{\xi} D^{\alpha+\gamma}_xb_2-\sum_{s=0}^{N-j-1}\sum_{|\delta|=s}\frac{(-1)^{|\delta|}}{\delta!}\partial^{\alpha+\delta}_{\xi} D^{\alpha+\gamma+\delta}_xb\right)\\
&{}&\hspace{50 pt}+\sum_{j=0}^{N-1}\sum_{s=0}^{N-j-1}\sum_{|\alpha+\gamma|=j}\sum_{|\delta|=s} \frac{(-1)^{|\delta|}}{\alpha!\gamma!\delta!}\partial^{\gamma}_{\xi} a\cdot\partial^{\alpha+\delta}_{\xi} D^{\alpha+\gamma+\delta}_x b.
\eeqs
Note that\\
$\ds\sum_{j=0}^{N-1}\sum_{s=0}^{N-j-1}\sum_{|\alpha+\gamma|=j}\sum_{|\delta|=s} \frac{(-1)^{|\delta|}}{\alpha!\gamma!\delta!}\partial^{\gamma}_{\xi} a\cdot\partial^{\alpha+\delta}_{\xi} D^{\alpha+\gamma+\delta}_x b$
\beqs
&=&\sum_{j=0}^{N-1}\sum_{s=0}^{N-j-1}\sum_{k=0}^j\sum_{\substack{|\alpha|=k\\ |\gamma|=j-k}}\sum_{|\delta|=s} \frac{(-1)^{|\delta|}}{\alpha!\gamma!\delta!}\partial^{\gamma}_{\xi} a\cdot\partial^{\alpha+\delta}_{\xi} D^{\alpha+\gamma+\delta}_x b\\
&=&\sum_{s=0}^{N-1}\sum_{j=0}^{N-s-1}\sum_{k=0}^j\sum_{\substack{|\alpha|=k\\ |\gamma|=j-k}}\sum_{|\delta|=s} \frac{(-1)^{|\delta|}}{\alpha!\gamma!\delta!}\partial^{\gamma}_{\xi} a\cdot\partial^{\alpha+\delta}_{\xi} D^{\alpha+\gamma+\delta}_x b\\
&=&\sum_{s=0}^{N-1}\sum_{k=0}^{N-s-1}\sum_{j=k}^{N-s-1}\sum_{\substack{|\alpha|=k\\ |\gamma|=j-k}}\sum_{|\delta|=s} \frac{(-1)^{|\delta|}}{\alpha!\gamma!\delta!}\partial^{\gamma}_{\xi} a\cdot\partial^{\alpha+\delta}_{\xi} D^{\alpha+\gamma+\delta}_x b\\
&=&\sum_{t=0}^{N-1}\sum_{s+k=t}\sum_{j=k}^{N-s-1}\sum_{|\gamma|=j-k}\sum_{\substack{|\alpha|=k\\ |\delta|=s}} \frac{(-1)^{|\delta|}}{\alpha!\gamma!\delta!}\partial^{\gamma}_{\xi} a\cdot\partial^{\alpha+\delta}_{\xi} D^{\alpha+\gamma+\delta}_x b\\
&=&\sum_{t=0}^{N-1}\sum_{s+k=t}\sum_{j=k}^{N-s-1}\sum_{|\gamma|=j-k}\sum_{|\beta|=t}\sum_{\alpha+\delta=\beta} \frac{(-1)^{|\delta|}}{\alpha!\gamma!\delta!}\partial^{\gamma}_{\xi} a\cdot\partial^{\beta}_{\xi} D^{\beta+\gamma}_x b\\
&=&\sum_{j=0}^{N-1}\sum_{|\gamma|=j} \frac{1}{\gamma!}\partial^{\gamma}_{\xi} a\cdot D^{\gamma}_x b.
\eeqs
Hence, we have to estimate the derivatives of
\beqs
\sum_{j=0}^{N-1}\sum_{|\alpha+\gamma|=j}\frac{1}{\alpha!\gamma!}\partial^{\gamma}_{\xi} a\cdot \partial^{\alpha}_{\xi} D^{\alpha+\gamma}_x\left(b_2-\sum_{s=0}^{N-j-1}\sum_{|\delta|=s}\frac{(-1)^{|\delta|}}{\delta!}\partial^{\delta}_{\xi} D^{\delta}_xb\right).
\eeqs
By construction $\ds b(x,\xi)\sim\sum_{j=0}^{\infty}\sum_{|\delta|=j}\frac{(-1)^{|\delta|}}{\delta!}\partial_{\xi}^{\delta}D^{\delta}_x b(x,\xi)$. So, for $(x,\xi)\in Q_{Bm_N}^c$, we have\\
$\ds \left|D^{\alpha'}_{\xi}D^{\beta'}_x\sum_{j=0}^{N-1}\sum_{|\alpha+\gamma|=j}\frac{1}{\alpha!\gamma!}\partial^{\gamma}_{\xi} a(x,\xi)\partial^{\alpha}_{\xi} D^{\alpha+\gamma}_x \left(b_2(x,\xi)-\sum_{s=0}^{N-j-1}\sum_{|\delta|=s}\frac{(-1)^{|\delta|}}{\delta!}\partial^{\delta}_{\xi} D^{\delta}_x b(x,\xi)\right)\right|$
\beqs
&\leq&C_1\sum_{j=0}^{N-1}\sum_{|\alpha+\gamma|=j}\sum_{\substack{\alpha''\leq\alpha'\\ \beta''\leq\beta'}} {\alpha'\choose\alpha''}{\beta'\choose\beta''} \frac{h^{|\alpha'|+|\beta'|+2N}A_{|\alpha'|+j}B_{|\beta'|+j}A_{N-j}B_{N-j}e^{2M(m|\xi|)}e^{2M(m|x|)}} {\alpha!\gamma!\langle(x,\xi)\rangle^{\rho(|\alpha'|+|\beta'|+2N)}}\\
&\leq&C\frac{(4Hh)^{|\alpha'|+|\beta'|+2N}A_{\alpha'}B_{\beta'}A_{N}B_{N}e^{M(mH|\xi|)}e^{M(mH|x|)}} {\langle(x,\xi)\rangle^{\rho(|\alpha'|+|\beta'|+2N)}},
\eeqs
which gives the desired asymptotic expansion.
\end{proof}

For the next corollary we need the following technical lemma.

\begin{lemma}\label{ase}
Let $a,b\in\Gamma_{A_p,B_p,\rho}^{*,\infty}\left(\RR^{2d}\right)$ are such that $a\sim\sum_j a_j$ and $b\sim\sum_j b_j$. Then $\ds ab\sim\sum_{j=0}^{\infty}\sum_{s+k=j}a_sb_k$ and
\beq\label{zzs}
\partial^{\alpha}_{\xi}a(x,\xi)\partial^{\alpha}_x b(x,\xi)\sim\underbrace{0+...+0}_{|\alpha|}+\sum_{j=|\alpha|}^{\infty}\sum_{s+k+|\alpha|=j}
\partial^{\alpha}_{\xi} a_s(x,\xi)\partial^{\alpha}_x b_k(x,\xi)
\eeq
in $FS_{A_p,B_p,\rho}^{*,\infty}\left(\RR^{2d}\right)$, for each $\alpha\in\NN^d$. Moreover, there exist $B>0$ and $m>0$ such that, for every $h>0$, there exists $C>0$; resp. there exist $B>0$ and $h>0$ such that, for every $m>0$, there exists $C>0$; such that
\beqs
\sup_{\alpha}\sup_{N>|\alpha|}\sup_{\gamma,\delta}\sup_{(x,\xi)\in Q_{Bm_N}^c}\left|D^{\gamma}_{\xi}D^{\delta}_x \left(\partial^{\alpha}_{\xi}a(x,\xi)\partial^{\alpha}_x b(x,\xi)-\sum_{j=|\alpha|}^{N-1}\sum_{s+k+|\alpha|=j}
\partial^{\alpha}_{\xi} a_s(x,\xi)\partial^{\alpha}_x b_k(x,\xi)\right)\right|\\
\cdot \frac{\langle(x,\xi)\rangle^{\rho|\gamma|+\rho|\delta|+2N\rho}e^{-M(m|\xi|)}e^{-M(m|x|)}}
{h^{|\gamma|+|\delta|+2N}A_{\gamma}B_{\delta}A_NB_N}\leq C.
\eeqs
\end{lemma}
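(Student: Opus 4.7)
The plan is to reduce both assertions to a simple telescoping identity and to the defining tail bounds of $a\sim\sum_j a_j$ and $b\sim\sum_j b_j$, combined with the growth conditions for $a,b\in\Gamma_{A_p,B_p,\rho}^{*,\infty}\left(\RR^{2d}\right)$ and the $(M.2)$ property of $A_p$ and $B_p$.

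For the first equivalence $ab\sim\sum_j\sum_{s+k=j}a_sb_k$, I would use the standard telescoping identity. Put $\tilde a_N=\sum_{s<N}a_s$ and $\tilde b_N=\sum_{k<N}b_k$, and write
\beqs
ab-\sum_{j<N}\sum_{s+k=j}a_sb_k&=&(a-\tilde a_N)\tilde b_N+\tilde a_N(b-\tilde b_N)+(a-\tilde a_N)(b-\tilde b_N)\\
&{}&-\sum_{\substack{N\leq s+k\leq 2N-2\\s,k<N}}a_sb_k.
\eeqs
On $Q_{Bm_N}^c$, for $B$ chosen large enough so that the tail bounds defining $\sim$ are valid, the factors $a-\tilde a_N$ and $b-\tilde b_N$ contribute the desired decay $h^{2N}A_NB_N\langle(x,\xi)\rangle^{-2N\rho}$ directly from the definition, while $\tilde a_N$ and $\tilde b_N$ are controlled by the $\Gamma$-norms of $a$ and $b$ via $\tilde a_N=a-(a-\tilde a_N)$. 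For the finite ``overlap'' block (the last sum), the log-convexity bound $A_sA_k\leq A_{s+k}$ (similarly for $B$) together with $\langle(x,\xi)\rangle^{-2(s+k)\rho}\leq\langle(x,\xi)\rangle^{-2N\rho}$ (valid since $s+k\geq N$) and the $O(N)$ number of terms (absorbed into $h^{2N}$ by enlarging $h$) yields the required bound. Leibniz' rule and $(M.2)$ take care of the derivatives $D^\gamma_\xi D^\delta_x$ in the standard way.

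For the second equivalence, I would apply the same decomposition to $\partial^\alpha_\xi a$ and $\partial^\alpha_x b$ in place of $a$ and $b$, with the cutoff $N$ replaced by $N-|\alpha|$; this is exactly what produces the $|\alpha|$ leading zeros in (\ref{zzs}). Termwise differentiation of $a\sim\sum_s a_s$ is legitimate: applying $D^\gamma_\xi D^\delta_x$ to the tail $a-\sum_{s<M}a_s$ and using $(M.2)$ for $A_p$ shows that $\partial^\alpha_\xi a\sim\sum_s\partial^\alpha_\xi a_s$ in $FS_{A_p,B_p,\rho}^{*,\infty}\left(\RR^{2d}\right)$, with the same $B$ and $m$ (respectively $h$) and only a controlled inflation of the complementary parameter. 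Consequently, on $Q_{Bm_j}^c$ with $j=|\alpha|+s+k$, the product $\left|\partial^\alpha_\xi a_s\,\partial^\alpha_x b_k\right|$ gains $h^{2j}A_\alpha B_\alpha A_sB_sA_kB_k\langle(x,\xi)\rangle^{-2j\rho}$, and the log-convexity chain $A_\alpha A_sA_k\leq A_{|\alpha|+s+k}=A_j$ (and the analogue for $B_p$) places this in the correct form for $FS_{A_p,B_p,\rho}^{*,\infty}\left(\RR^{2d}\right)$.

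The main obstacle is the uniformity in $\alpha$: the parameters $B$, $h$, $m$ and the constant $C$ must be independent of $\alpha$, even though the expansion begins at index $|\alpha|$. Two points need to be checked carefully. First, that passing $\partial^\alpha_\xi$ through the tail of $a$ inflates the $h$-scale only by a factor $H^{|\alpha|}$, which is compensated by the $A_\alpha$ appearing in the resulting bound since $A_\alpha\leq c_0 L^{|\alpha|}M_\alpha$. Second, that in the shifted overlap block, whose index range becomes $[N-|\alpha|,\,2(N-|\alpha|)-2]$, the log-convexity absorption $A_\alpha A_sA_k\leq A_{|\alpha|+s+k}$ eventually yields a bound involving only $A_NB_N$ and powers of $h$ independent of $\alpha$. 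Both are routine applications of the $(M.2)$/log-convexity machinery already used in the proofs of Theorems \ref{150} and \ref{cef}, so no genuinely new technique is required beyond what the paper has already developed.
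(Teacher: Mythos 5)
Your overall strategy (telescoping against the partial sums, then the tail bounds from the definition of $\sim$ combined with $(M.1)$/$(M.2)$) is the right one, and your handling of the three tail terms, of the termwise differentiation, and of the index shift by $|\alpha|$ is essentially what the paper does. But your decomposition differs from the paper's in a way that opens a genuine gap. The paper writes, with $M=N-|\alpha|$,
\beqs
\partial^{\alpha}_{\xi}a\cdot\partial^{\alpha}_{x}b-\sum_{s+k<M}\partial^{\alpha}_{\xi}a_{s}\,\partial^{\alpha}_{x}b_{k}
&=&\partial^{\alpha}_{\xi}a\cdot\Bigl(\partial^{\alpha}_{x}b-\sum_{k<M}\partial^{\alpha}_{x}b_{k}\Bigr)\\
&{}&+\sum_{k<M}\Bigl(\partial^{\alpha}_{\xi}a-\sum_{s<M-k}\partial^{\alpha}_{\xi}a_{s}\Bigr)\partial^{\alpha}_{x}b_{k},
\eeqs
a triangular splitting in which every summand already carries total order $2N$, and only $A_{\alpha}A_{M-k}A_{k}\leq A_{N}$, $B_{\alpha}B_{M-k}B_{k}\leq B_{N}$ (i.e. $(M.1)$) is needed; no overlap block ever appears. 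Your ``square minus overlap'' splitting produces the extra block $\sum_{s,k<N,\,s+k\geq N}a_{s}b_{k}$ (which, incidentally, enters with a plus sign, not a minus --- harmless), and the estimate you propose for it does not close: after $A_{s}A_{k}\leq A_{s+k}$ and $\langle(x,\xi)\rangle^{-2(s+k)\rho}\leq\langle(x,\xi)\rangle^{-2N\rho}$ you are left with $h^{2j}A_{j}B_{j}$ for $j=s+k$ ranging up to $2N-2$, and you must produce $\tilde{h}^{2N}A_{N}B_{N}$. The ratio $A_{j}B_{j}/(A_{N}B_{N})$ is not $O(\lambda^{N})$ for any $\lambda$ in general (take $A_{p}=B_{p}=p!^{\rho}$: for $j=2N-2$ the ratio grows super-geometrically in $N$), so it cannot be ``absorbed into $h^{2N}$ by enlarging $h$''.

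The block is still controllable, but only if you keep the surplus decay rather than discarding it: on $Q^{c}_{Bm_{N}}$ use $\langle(x,\xi)\rangle^{-2(j-N)\rho}\leq(Bm_{N})^{-2(j-N)\rho}$, split $A_{j}\leq c_{0}H^{j}A_{N}A_{j-N}$ by $(M.2)$, and bound $A_{j-N}\leq cL^{j-N}M_{j-N}^{\rho}\leq cL^{j-N}m_{j-N}^{(j-N)\rho}\leq cL^{j-N}m_{N}^{(j-N)\rho}$ (valid because $j-N\leq N-2$), so that the powers of $m_{N}$ cancel and only a geometric factor $(cLH/B^{\rho})^{j-N}$ survives for $B$ large; this is exactly the $S_{1}$ estimate in the proof of theorem \ref{150}. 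With that repair --- or by switching to the triangular decomposition, which avoids the issue altogether --- your argument goes through; the remaining uniformity in $\alpha$ is handled as you indicate, since the $\alpha$-derivatives contribute precisely the factors $h^{2|\alpha|}A_{\alpha}B_{\alpha}\langle(x,\xi)\rangle^{-2\rho|\alpha|}$ that implement the index shift $j=s+k+|\alpha|$ via $(M.1)$.
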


\begin{proof} By the conditions in the lemma, there exist $B>0$ and $m>0$ such that, for every $h>0$, there exists $\tilde{C}>0$; resp. there exist $B>0$ and $h>0$ such that, for every $m>0$, there exists $\tilde{C}>0$; such that
\beqs
\sup_{j\in\NN}\sup_{\gamma,\delta}\sup_{(x,\xi)\in Q_{Bm_j}^c}\frac{\left|D^{\gamma}_{\xi}D^{\delta}_x a_j(x,\xi)\right|
\langle (x,\xi)\rangle^{\rho|\gamma|+\rho|\delta|+2j\rho}e^{-M(m|\xi|)}e^{-M(m|x|)}}
{h^{|\gamma|+|\delta|+2j}A_{\gamma}B_{\delta}A_jB_j}\leq \tilde{C},\\
\sup_{N\in\ZZ_+}\sup_{\gamma,\delta}\sup_{(x,\xi)\in Q_{Bm_N}^c}\frac{\left|D^{\gamma}_{\xi}D^{\delta}_x \left(a(x,\xi)-\sum_{j<N}a_j(x,\xi)\right)\right|\langle(x,\xi)\rangle^{\rho|\gamma|+\rho|\delta|+2N\rho}}
{h^{|\gamma|+|\delta|+2N}A_{\gamma}B_{\delta}A_NB_N}\cdot\\
\cdot e^{-M(m|\xi|)}e^{-M(m|x|)}\leq \tilde{C}
\eeqs
and the same estimate for $D^{\gamma}_{\xi}D^{\delta}_x b_j$ and $D^{\gamma}_{\xi}D^{\delta}_x \left(b-\sum_{j<N}b_j\right)$. One easily checks that $\ds \underbrace{0+...+0}_{|\alpha|}+\sum_{j=|\alpha|}^{\infty}\sum_{s+k+|\alpha|=j}
\partial^{\alpha}_{\xi} a_s\partial^{\alpha}_x b_k\in FS_{A_p,B_p,\rho}^{*,\infty}\left(\RR^{2d}\right)$, for each fixed $\alpha\in\NN^d$. For $N>|\alpha|$ and $(x,\xi)\in Q^c_{Bm_N}$, observe that
\beqs
\partial^{\alpha}_{\xi} a\cdot\partial^{\alpha}_x b&=&\partial^{\alpha}_{\xi} a\cdot\left(\partial^{\alpha}_x b-\sum_{k=0}^{N-|\alpha|-1}\partial^{\alpha}_x b_k\right)
+\sum_{k=0}^{N-|\alpha|-1}\left(\partial^{\alpha}_{\xi}a-\sum_{s=0}^{N-|\alpha|-k-1}\partial^{\alpha}_{\xi} a_s\right)\cdot\partial^{\alpha}_x b_k\\
&{}&\hspace{150 pt}+\sum_{j=|\alpha|}^{N-1}\sum_{s+k=j-|\alpha|}\partial^{\alpha}_{\xi} a_s\partial^{\alpha}_x b_k.
\eeqs
Using this, one verifies that\\
$\ds\left|D^{\gamma}_{\xi}D^{\delta}_x\sum_{k=0}^{N-|\alpha|-1}\left(\partial^{\alpha}_{\xi}a(x,\xi)-
\sum_{s=0}^{N-|\alpha|-k-1}\partial^{\alpha}_{\xi} a_s(x,\xi)\right)\cdot\partial^{\alpha}_x b_k(x,\xi)\right|$
\beqs
\leq c_0^4\tilde{C}^2\frac{(4hH)^{|\gamma|+|\delta|+2N}A_{\gamma}B_{\delta}A_{N}B_{N}
e^{M(mH|\xi|)}e^{M(mH|x|)}}{\langle(x,\xi)\rangle^{\rho|\gamma|+\rho|\delta|+2N\rho}},
\eeqs
for all $(x,\xi)\in Q^c_{Bm_N}$, $\gamma,\delta\in\NN^d$ and the estimates are uniform for $\alpha$ and $N$, $N>|\alpha|$. Analogously, one obtains similar estimates for the derivatives of $\ds \partial^{\alpha}_{\xi} a\cdot\left(\partial^{\alpha}_x b-\sum_{k=0}^{N-|\alpha|-1}\partial^{\alpha}_x b_k\right)$. Now we can estimate the derivatives of $\ds \partial^{\alpha}_{\xi} a\cdot\partial^{\alpha}_x b -\sum_{j=|\alpha|}^{N-1}\sum_{s+k=j-|\alpha|}\partial^{\alpha}_{\xi} a_s\partial^{\alpha}_x b_k$ and obtain the inequality in the lemma. Moreover, for fixed $\alpha\in\NN^d$, to obtain (\ref{zzs}) it only remains to consider the case when $N\leq |\alpha|$ (we already consider the case when $N>|\alpha|$ above). But then $\ds\sum_{j=|\alpha|}^{N-1}\sum_{s+k=j-|\alpha|}\partial^{\alpha}_{\xi} a_s\partial^{\alpha}_x b_k$ is empty and we only have to estimate the derivatives of $\partial^{\alpha}_{\xi} a\cdot\partial^{\alpha}_x b$ which is easy and we omit it ($a,b\in\Gamma_{A_p,B_p,\rho}^{*,\infty}\left(\RR^{2d}\right)$ and $\alpha$ is fixed).
\end{proof}

\begin{corollary}
Let $a,b\in\Gamma_{A_p,B_p,\rho}^{*,\infty}\left(\RR^{2d}\right)$ with asymptotic expansions $a\sim\sum_j a_j$ and $b\sim\sum_j b_j$. Then there exists $c\in\Gamma_{A_p,B_p,\rho}^{*,\infty}\left(\RR^{2d}\right)$ and *-regularizing operator $T$ such that $a(x,D)b(x,D)=c(x,D)+T$ and $c$ has the following asymptotic expansion
\beqs
c(x,\xi)\sim\sum_{j=0}^{\infty}\sum_{s+k+l=j}\sum_{|\alpha|=l}\frac{1}{\alpha!}\partial^{\alpha}_{\xi}
a_s(x,\xi)D^{\alpha}_x b_k(x,\xi).
\eeqs
\end{corollary}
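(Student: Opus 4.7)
The plan is to take $c$ to be the symbol produced by Theorem \ref{cef}: this yields $c\in \Gamma_{A_p,B_p,\rho}^{*,\infty}\left(\RR^{2d}\right)$ and a $*$-regularizing operator $T$ with $a(x,D)b(x,D)=c(x,D)+T$ and, after grouping by order, $c\sim \sum_{j}q_j$ in $FS_{A_p,B_p,\rho}^{*,\infty}\left(\RR^{2d}\right)$, where $q_j=\sum_{|\alpha|=j}\frac{1}{\alpha!}\partial_\xi^\alpha a\cdot D_x^\alpha b$. Setting $c_j=\sum_{l+s+k=j}\sum_{|\alpha|=l}\frac{1}{\alpha!}\partial_\xi^\alpha a_s\cdot D_x^\alpha b_k$, by transitivity of $\sim$ it remains to prove two things: that $\sum_j c_j\in FS_{A_p,B_p,\rho}^{*,\infty}\left(\RR^{2d}\right)$, and that $\sum_j q_j\sim \sum_j c_j$.

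The first item is the main technical step. I would fix $(x,\xi)\in Q_{Bm_j}^{c}$ (which is contained in $Q_{Bm_s}^{c}\cap Q_{Bm_k}^{c}$ whenever $s,k\leq j$) and apply the Leibniz rule to $D_\xi^\gamma D_x^\delta(\partial_\xi^\alpha a_s\cdot D_x^\alpha b_k)$, plugging in the $FS$-estimates for $a_s$ and $b_k$. Applying $(M.2)$ for $A_p$ and $B_p$ to split $A_{\gamma'+\alpha}\leq c_0 H^{|\gamma'|+|\alpha|}A_{\gamma'}A_\alpha$ and $B_{\delta''+\alpha}\leq c_0 H^{|\delta''|+|\alpha|}B_{\delta''}B_\alpha$, the key lever is the logarithmic convexity consequence $A_pA_q\leq A_{p+q}$ and $B_pB_q\leq B_{p+q}$ (valid since $A_0=B_0=1$ and the sequences $A_p/A_{p-1}$, $B_p/B_{p-1}$ are non-decreasing by $(M.1)$). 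This gives $A_{\gamma'}A_{\gamma''}\leq A_\gamma$, $B_{\delta'}B_{\delta''}\leq B_\delta$, and $A_\alpha A_s A_k\leq A_j$, $B_\alpha B_s B_k\leq B_j$, so one arrives at
$$|D_\xi^\gamma D_x^\delta(\partial_\xi^\alpha a_s\cdot D_x^\alpha b_k)|\leq C\cdot\frac{(h')^{|\gamma|+|\delta|+2j}H^{2|\alpha|}A_\gamma B_\delta A_jB_j}{\langle(x,\xi)\rangle^{\rho|\gamma|+\rho|\delta|+2j\rho}}\cdot e^{M(Hm|\xi|)}e^{M(Hm|x|)}$$
uniformly in $\alpha,s,k$ with $|\alpha|+s+k=j$. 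Multiplying by $1/\alpha!$ and using $\sum_{|\alpha|=l}\frac{H^{2l}}{\alpha!}=\frac{(dH^2)^l}{l!}$, the triple sum over $l+s+k=j$ contributes only a factor $(j+1)e^{dH^2}$, which is absorbed into a slightly enlarged $h$. This places $\sum_j c_j\in FS_{A_p,B_p,\rho}^{*,\infty}\left(\RR^{2d}\right)$.

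For the equivalence, observe that
$$\sum_{j<N}(q_j-c_j)=\sum_{|\alpha|<N}\frac{1}{\alpha!}\left(\partial_\xi^\alpha a\cdot D_x^\alpha b-\sum_{s+k<N-|\alpha|}\partial_\xi^\alpha a_s\cdot D_x^\alpha b_k\right).$$
Since the formal expansions of $a$ and $b$ differentiate term-by-term to give asymptotic expansions of $\partial_\xi^\alpha a$ and $D_x^\alpha b$, Lemma \ref{ase} applied to each pair $(\partial_\xi^\alpha a,D_x^\alpha b)$ supplies on $Q_{Bm_N}^{c}$ (with $N>|\alpha|$) a bound on the bracketed quantity whose constant does not depend on $\alpha$. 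Weighting by $1/\alpha!$ and summing, using $\sum_{\alpha\in\NN^d}\frac{1}{\alpha!}=e^d<\infty$, preserves the estimate, whence $\sum_j q_j\sim\sum_j c_j$. Transitivity of $\sim$ then gives $c\sim\sum_j c_j$, which is the claim.

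The main obstacle is the verification that $\sum_j c_j\in FS_{A_p,B_p,\rho}^{*,\infty}\left(\RR^{2d}\right)$: the Leibniz expansion generates mixed multi-index interactions, and the proof hinges on the log-convexity identity $A_pA_q\leq A_{p+q}$ (and its analogues for $B_p$ and $M_p$) to collapse the extra $A$- and $B$-factors arising from $\partial_\xi^\alpha a_s$ and $D_x^\alpha b_k$ into the required $A_jB_j$. Once this combinatorial bookkeeping is in place, the equivalence step follows essentially immediately from the uniform-in-$\alpha$ estimate already built into Lemma \ref{ase}.
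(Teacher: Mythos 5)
Your proposal is correct and follows essentially the same route as the paper: reduce via Theorem \ref{cef} to the equivalence $\sum_j q_j\sim\sum_j c_j$, regroup the partial sums exactly as you do, and invoke the uniform-in-$\alpha$ estimate of Lemma \ref{ase} together with $\sum_\alpha 1/\alpha!<\infty$. The only difference is that you spell out the membership $\sum_j c_j\in FS_{A_p,B_p,\rho}^{*,\infty}\left(\RR^{2d}\right)$ (via $(M.2)$ and log-convexity of $A_p$, $B_p$), which the paper dismisses as "easy to check"; your bookkeeping there is sound.
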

\begin{proof} It is easy to check that the above formal sum is an element of $FS_{A_p,B_p,\rho}^{*,\infty}\left(\RR^{2d}\right)$. By theorem \ref{cef}, we only have to prove that
\beqs
\sum_{j=0}^{\infty}\sum_{|\alpha|=j}\frac{1}{\alpha!}\partial^{\alpha}_{\xi}a(x,\xi)D^{\alpha}_x b(x,\xi)\sim\sum_{j=0}^{\infty}\sum_{s+k+l=j}\sum_{|\alpha|=l}\frac{1}{\alpha!}\partial^{\alpha}_{\xi}
a_s(x,\xi)D^{\alpha}_x b_k(x,\xi).
\eeqs
For $N\in\ZZ_+$ and $(x,\xi)\in Q_{Bm_N}^c$, we have\\
$\ds \sum_{j=0}^{N-1}\sum_{|\alpha|=j}\frac{1}{\alpha!}\partial^{\alpha}_{\xi}a(x,\xi)D^{\alpha}_x b(x,\xi)-\sum_{j=0}^{N-1}\sum_{s+k+l=j}\sum_{|\alpha|=l}\frac{1}{\alpha!}\partial^{\alpha}_{\xi}
a_s(x,\xi)D^{\alpha}_x b_k(x,\xi)$
\beqs
&=&\sum_{j=0}^{N-1}\sum_{|\alpha|=j}\frac{1}{\alpha!}\partial^{\alpha}_{\xi}a(x,\xi)D^{\alpha}_x b(x,\xi)-\sum_{j=0}^{N-1}\sum_{l=0}^{j}\sum_{s+k=j-l}\sum_{|\alpha|=l}\frac{1}{\alpha!}\partial^{\alpha}_{\xi}
a_s(x,\xi)D^{\alpha}_x b_k(x,\xi)\\
&=&\sum_{j=0}^{N-1}\sum_{|\alpha|=j}\frac{1}{\alpha!}\partial^{\alpha}_{\xi}a(x,\xi)D^{\alpha}_x b(x,\xi)-\sum_{l=0}^{N-1}\sum_{j=l}^{N-1}\sum_{s+k=j-l}\sum_{|\alpha|=l}\frac{1}{\alpha!}\partial^{\alpha}_{\xi}
a_s(x,\xi)D^{\alpha}_x b_k(x,\xi)\\
&=&\sum_{j=0}^{N-1}\sum_{|\alpha|=j}\frac{1}{\alpha!}\left(\partial^{\alpha}_{\xi}a(x,\xi)D^{\alpha}_x b(x,\xi)-\sum_{l=j}^{N-1}\sum_{s+k=l-j}\partial^{\alpha}_{\xi}a_s(x,\xi)D^{\alpha}_x b_k(x,\xi)\right).
\eeqs
By lemma \ref{ase}, the derivatives of $\ds \partial^{\alpha}_{\xi}a(x,\xi)D^{\alpha}_x b(x,\xi)-\sum_{l=j}^{N-1}\sum_{s+k=l-j}\partial^{\alpha}_{\xi}a_s(x,\xi)D^{\alpha}_x b_k(x,\xi)$ can be uniformly estimated, as in the lemma, for all $\alpha$, $N$ and $(x,\xi)\in Q_{Bm_N}^c$, such that $|\alpha|<N$, from what the desired equivalence follows.
\end{proof}

\end{document}